\newtheorem{theorem}{Theorem}[section]
\newtheorem{lemma}[theorem]{Lemma}
\newtheorem{proposition}[theorem]{Proposition}
\newtheorem{corollary}[theorem]{Corollary}
\theoremstyle{definition}
\theoremstyle{remark}
\newtheorem{remark}[theorem]{Remark}
\numberwithin{equation}{section}
\newcommand{\R}{{\mathbb R}}
\newcommand{\e}{\epsilon}
\newcommand{\C}{\mathcal{C}}
\newcommand{\M}{\mathcal{M}}
\author{Mark Allen}
\address{Department of Mathematics, Brigham Young University, Provo,
  UT 84602}
\email{allen@mathematics.byu.edu}
\subjclass[2010]{35R35,35R01,35J20,49N60}
\title[Free Boundary Problem]{A Free boundary problem\\ on three-dimensional cones}
\begin{document}

\maketitle
 \begin{abstract}
  We consider a free boundary problem on cones depending on a parameter $c$ and study when the free boundary 
  is allowed to pass through the vertex of the cone. We show that when the cone is three-dimensional and $c$ is large enough, the free boundary avoids the vertex. 
  We also show that when 
  $c$ is small enough but still positive, the free boundary is allowed to pass through the vertex. This establishes $3$ as the critical dimension for which 
  the free boundary may pass through the vertex of a right circular cone. In view of the well-known connection between area-minimizing 
  surfaces and the free boundary problem under consideration, our result is analogous to a result of Morgan that classifies when an area-minimizing surface
  on a cone passes through the vertex.  
 \end{abstract}

\section{Introduction}
 We study solutions to the problem 
    \begin{equation}   \label{e:onephase}
     \begin{aligned} 
      \Delta u &=0  &\text{ in } &\{u>0\} \\
         |\nabla u|&=1  &\text{ on } &\partial \{u>0\}.
   \end{aligned}
   \end{equation}
 on right circular cones in $\R^n$. We are interested in determining when the free boundary $\partial \{u>0\}$ is allowed to pass through the vertex of the cone. 
 
 The above problem has applications to two dimensional flow problems as well as heat flow problems 
 (see \cite{ac81} where \eqref{e:onephase} was first studied). When considering the applications on a manifold, one studies a variable coefficient problem in divergence form: 
  \begin{equation}    \label{e:variphase}
    \begin{aligned} 
      \partial_j(a^{ij}(x)u_i) &=0  &\text{ in } &\{u>0\} \\
         a^{ij}(x)u_i u_j &=1  &\text{ on } &\partial \{u>0\}.
   \end{aligned}\
  \end{equation}
 Solutions of \eqref{e:variphase} may be found inside a bounded domain $\Omega$ by minimizing the functional:
  \begin{equation}   \label{e:functional2}
   \int_{\Omega} a^{ij}(x)v_iv_j + Q(x)\chi_{\{v>0\}}. 
  \end{equation}
  However, since the functional is not convex, minimizers of \eqref{e:functional2} may not be unique and there exist solutions to \eqref{e:variphase} 
  which are not minimizers of \eqref{e:functional2}. When the coefficients $a^{ij}(x)$ are Lipschitz continuous and satisfy an ellipticity condition, 
  regularity of the free boundary was studied in \cite{fs07}. The authors in \cite{fs07} adapted the sup-convolution
  approach of Caffarelli in \cite{c87,c88,c89} for viscosity solutions. This approach relies on a nondivergence structure and therefore requires Lipschitz continuity of the coefficients
  $a^{ij}(x)$ so that \eqref{e:functional2} can be transformed into a nondivergence operator. More recently, the regularity of the free boundary for H\"older continuous 
  coefficients $a^{ij}(x)$ was accomplished in \cite{dfs16} using different techniques. For coefficients $a^{ij}(x)$ assumed merely to be bounded, measurable, and satisfying the usual
  ellipticity conditions, regularity of the solution and its growth away from the free boundary was studied in \cite{pt16}. However, to date nothing is know regarding the regularity
  of the free boundary when the coefficients $a^{ij}(x)$ are allowed to be discontinuous. In this paper we are interested in how the free boundary interacts with 
  isolated discontinuous points of the coefficients $a^{ij}(x)$. 
  In the context of a hypersurface, these points are considered to be a topological singularity. The simplest such case is the vertex of a cone. The aim of
  this paper is to study when the free boundary of a solution that arises as a minimizer is allowed to pass through a topological singularity. Before stating the main 
  results of this paper we 
  first recall a connection between solutions to \eqref{e:onephase} and minimal surfaces in order to understand what results one might expect for the free boundary problem 
  on a cone. 
  
  \subsection{Connection to minimal surfaces}
    Results for the singular set of free boundary points are analogous to 
    results for the singular set of minimal surfaces. In the case of area-minimizing surfaces, the study of the singular set is reduced to considering  
    area-minimizing cones. Simons \cite{s68} showed that any area-minimizing cone in $\R^n$ for $n\leq 7$ is necessarily planar. Simons actually proved a stronger
    result in \cite{s68} by showing that any minimal stable cone is planar. He also provided an example of a cone in $\R^8$ that is stable and therefore
     a possible candidate for being an area-minimizing cone. One year later, it was shown that the Simons 
     cone is indeed area-minimizing, see \cite{bdg69}. As a consequence, $n=8$ is the first dimension for which a singularity of an area-minimizing hypersurface may occur. 
     
     Regarding the singular set of the free boundary for  minimizers, the authors in \cite{ac81} showed there are no singular points in dimension $n=2$. 
     In \cite{w99}
    a monotonicity formula is utilized to show that blow-up solutions are homogeneous, and therefore the free boundary of blow-up solutions is a cone. As a further consequence
     there exists a minimal dimension $k^*$ such that the singular set of the free boundary of minimizers is empty if the dimension $n<k^*$. 
     The authors in \cite{cjk04} showed $k^* >3$ and also provided an example of a nontrivial stable solution in dimension $n=7$. This example is analogous to the Simons
     cone and was later shown in \cite{dj09} to indeed be a minimizer. 
     Recently, the article \cite{js15} improved $k^*>4$. It is still an open problem as to whether $k^*=5,6,$ or $7$.

     The article \cite{t14} further strengthened the connection between minimal surfaces and the free boundary problem by establishing 
     a one-to-one correspondence between solutions of \eqref{e:onephase} in $\R^2$ and minimal bigraphs in $\R^3$. The one-to-one 
     correspondence further strengthens the principle of  
     a reduction in one dimension when moving from the theory of minimal surfaces to the one phase problem. Recall for instance that $k^* =8$ for area-minimizing surfaces 
     where as $k^*$ is most likely $7$ for minimizers of \eqref{e:functional2}. 
     
     \subsection{Area-minimizing surfaces and the free boundary problem on cones}
      In light of the connection described above, one may expect that results for the free boundary problem on cones are analogous 
      to the results for area-minimizing surfaces on cones. The description of the free boundary problem on cones \eqref{e:phase1} as well as the corresponding functional 
      \eqref{e:conefunct} is given later in Section \ref{s:prelim}. 
      
      On two-dimensional cones area-minimizing surfaces of co-dimension $1$ are distance minimizing geodesics. Two-dimensional cones
      in $\R^3$ are determined by the intersection of the cone with the two-sphere. If this intersection is a simple closed curve $\gamma$ on $S^2$, then the following Proposition 
      is well known.
       \begin{proposition}   \label{p:carmo}
        If length$(\gamma)<2\pi$, no distance minimizing geodesics pass through the vertex. If length $(\gamma)\geq2\pi$, then there are distance minimizing geodesics that 
        pass through the vertex. 
       \end{proposition}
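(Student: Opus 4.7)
The plan is to reduce the question to flat geometry by exploiting that the cone over $\gamma$ is intrinsically flat away from the vertex. I would first parametrize the cone by $(t,s)\in[0,\infty)\times \R/L\mathbb{Z}$, where $L=\operatorname{length}(\gamma)$ and $\gamma$ is taken to be unit-speed. Using $|\gamma|=1$, $|\gamma'|=1$, and $\gamma\cdot\gamma'=0$, a direct calculation gives the induced metric as $dt^2+t^2\,ds^2$. Thus the cone carries a flat metric with a single conical singularity at the vertex of total angle $L$, and the intrinsic geometry depends on $\gamma$ only through $L$.

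Next I would work in the universal cover $(0,\infty)\times \R$ equipped with this metric, which becomes a complete length space after adjoining the vertex as a single point. The cone distance between $(t_1,s_1)$ and $(t_2,s_2)$ equals the infimum over $k\in\mathbb{Z}$ of the universal-cover distance between lifts at angular separations $\theta_k = s_2-s_1+kL$. The key intermediate step is the distance formula in the universal cover for points at radii $t_1,t_2$ and angular separation $\theta$:
\begin{equation*}
d(\theta)=\begin{cases}\sqrt{t_1^2+t_2^2-2t_1 t_2\cos\theta} & \text{if } |\theta|\le\pi,\\ t_1+t_2 & \text{if }|\theta|\ge \pi.\end{cases}
\end{equation*}
The first case follows from isometrically embedding a sector of angle $|\theta|\le\pi$ into $\R^2$ and drawing the Euclidean chord. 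The main obstacle is the second case. I would invoke Clairaut's relation $t^2\dot s=\text{const}$ for arc-length geodesics in this metric, which shows that any non-radial geodesic sweeps a total angle of exactly $\pi$ between its two asymptotic ends and hence strictly less than $\pi$ between any two points of finite radius. Consequently, no smooth geodesic avoiding the vertex can connect two points at angular separation $\ge \pi$; any length-minimizing curve must pass through the vertex, and the shortest such curve is the two-leg radial path of length $t_1+t_2$.

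With the distance formula in hand, the proposition follows by case analysis. If $L<2\pi$, then for any two points on the cone some lift satisfies $|\theta_k|\le L/2<\pi$, giving cone distance strictly less than $t_1+t_2$; hence no through-vertex geodesic minimizes. If $L\ge 2\pi$, the choice $p_1=(t_1,0)$, $p_2=(t_2,\pi)$ forces every lift to satisfy $|\theta_k|\ge \pi$, so every universal-cover distance equals $t_1+t_2$, achieved by the radial path through the vertex, which is therefore distance-minimizing.
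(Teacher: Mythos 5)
The paper does not actually prove this proposition: it cites Section 4-7 of do Carmo for the first statement and treats the whole result as classical, so there is no in-paper argument to compare against. Your unrolling argument is the standard one and is essentially correct: the computation of the induced metric $dt^2+t^2\,ds^2$ (using $|\gamma|=|\gamma'|=1$ and $\gamma\cdot\gamma'=0$) is right, the reduction of the cone distance to $\inf_k d(\theta_k)$ over lifts is right, and both case analyses at the end are right. Note that for $L<2\pi$ you need only the chord as an \emph{upper} bound (immediate from convexity of a planar sector of angle at most $\pi$) together with the trivial lower bound $t_1+t_2$ for any path meeting the vertex, so the first half of your distance formula is doing more work than required.

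The one step that deserves more care is the lower bound $d(\theta)\ge t_1+t_2$ when $|\theta|\ge\pi$. Your Clairaut computation shows that no geodesic avoiding the vertex joins the two lifted points, but the sentence ``any length-minimizing curve must pass through the vertex'' presumes that a minimizing curve exists; a priori the infimum over vertex-avoiding paths could be smaller than $t_1+t_2$ without being attained by any geodesic. Two standard repairs: (i) pass to the metric completion, a complete locally compact length space, invoke Hopf--Rinow to produce a minimizer, and observe that a minimizer missing the vertex would develop to a straight segment, contradicting your angular-sweep bound; or (ii) a direct calibration: for a path $(t(u),s(u))$ with $s(0)=0$ and $s(1)=\theta\ge\pi$, choose $u_0$ with $s(u_0)=\pi$ by the intermediate value theorem; since $\left|\tfrac{d}{du}\bigl(t\cos s\bigr)\right|\le\sqrt{\dot t^2+t^2\dot s^2}$, the first piece has length at least $t_1+t(u_0)$ and the second at least $t_2-t(u_0)$, giving $t_1+t_2$ in total. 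With either repair the proof is complete. One cosmetic point: ``pass through the vertex'' should be read as ``contain the vertex in its interior,'' since a radial geodesic emanating from the vertex is minimizing for every $L$.
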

      The first statement in Proposition \ref{p:carmo} can be found in Section 4-7 of \cite{d76}. The author with Chang Lara proved the following complete analogous result 
      \cite{acl15} for 
      minimizers of \eqref{e:conefunct} on a two-dimensional cone. 
      \begin{theorem}  \label{t:2cone}
        If $u$ is a minimizer of \eqref{e:conefunct} on a two-dimensional cone, and if length$(\gamma)<2\pi$, then the vertex of the cone $0\notin \partial \{u>0\}$. If
        length$(\gamma)\geq 2\pi$, then the free boundary can pass through the vertex. 
      \end{theorem}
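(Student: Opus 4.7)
The plan is to combine a Weiss-type monotonicity formula with an explicit energy comparison, treating the two halves of the theorem in turn. Write $L := \mathrm{length}(\gamma)$. The cone is dilation-invariant about its vertex and locally Euclidean off it, so the Alt-Caffarelli regularity and non-degeneracy theory applies on the cone, and the Weiss monotonicity quantity
\[
W(u,r) := r^{-2}\int_{B_r}\bigl(|\nabla u|^2 + \chi_{\{u>0\}}\bigr)\,dA - r^{-3}\int_{\partial B_r} u^2\,ds
\]
is non-decreasing in $r$ for minimizers of \eqref{e:conefunct}, with $W(u,\cdot)$ constant precisely when $u$ is homogeneous of degree one about the vertex.

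For the first statement, assume for contradiction that $0\in\partial\{u>0\}$. Monotonicity of $W$ together with non-degeneracy and local Lipschitz estimates yields a blow-up $u_0 = \lim u(r_k \,\cdot)/r_k$ that is non-trivial, a global minimizer on the cone, and homogeneous of degree one. Writing $u_0 = r f(\theta)$ with $\theta\in\mathbb{R}/L\mathbb{Z}$, the equations $\Delta u_0=0$ in $\{u_0>0\}$ and $|\nabla u_0|=1$ on $\partial\{u_0>0\}$ become $f''+f=0$ on each component of $\{f>0\}$, with $f$ vanishing at the component endpoints and $|f'|=1$ there. This forces each component of $\{f>0\}$ to be an arc of length exactly $\pi$ on which $f=\sin(\theta-\alpha)$. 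If $L\leq\pi$, no such arc fits on the circle of circumference $L$ (the endpoint case $L=\pi$ failing because wrap-around would force the wrong derivative at the identified endpoints), contradicting $u_0\not\equiv 0$.

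The main obstacle is therefore the range $\pi<L<2\pi$, where the single-arc candidate $u_0 = r\sin(\theta-\alpha)^+$ exists as a solution and must be ruled out as a minimizer. The plan is to construct an explicit competitor $v$ on $B_1$ with $v = u_0$ on $\partial B_1$ and $\int_{B_1}\bigl(|\nabla v|^2 + \chi_{\{v>0\}}\bigr) < \pi = E(u_0)$, exploiting the angular deficit $2\pi-L>0$. Since the complementary arc $\{f=0\}$ has positive length $L-\pi$, unrolling the cone along a cut inside this arc embeds $B_1\setminus\{0\}$ isometrically into a planar sector of angle $L$ in which the support of $u_0$ occupies a strict sub-sector of angle $\pi$, leaving geometric room to bend the free boundary near the vertex into the complementary wedge. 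Concretely I would take $v$ to be the minimizer of \eqref{e:conefunct} on the punctured ball $B_1\setminus\overline{B_\delta}$ with boundary data $u_0|_{\partial B_1}$ on $\partial B_1$ and $0$ on $\partial B_\delta$, extended by zero on $B_\delta$, with $\delta=\delta(L)$ optimized. The energy comparison splits into two regimes: when $L$ is close to $\pi$ already the harmonic extension of the boundary data beats $u_0$ because the positive-set contribution $L/2$ is small; when $L$ is close to $2\pi$ the improvement must instead be extracted from a first-order expansion of the competitor's energy in the deficit $2\pi-L$ about the planar profile $y^+$. Obtaining a strict improvement $E(v)<\pi$ uniformly across $L\in(\pi,2\pi)$ is the technical heart of the proof, and contradicts minimality of $u_0$, hence of $u$.

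For the second statement, take $u(r,\theta) := r\sin\theta^+$ on the sector $\theta\in[0,\pi]$ and extend it by zero on $[\pi,L]$. This is continuous on the cone, satisfies the PDE on $\{u>0\}$ and the free-boundary condition on $\partial\{u>0\}\setminus\{0\}$, and has $0\in\partial\{u>0\}$. To verify minimality on a ball $B_R$ about the vertex, note that the supporting sector is isometric to the planar upper half-disc of radius $R$, and since $L\geq 2\pi$ the complementary sector, of angle $L-\pi\geq\pi$, isometrically contains a planar lower half-disc. Any competitor $v$ can then be compared sector by sector to the classical planar minimizer $y^+$ on the disc of radius $R$; positive mass placed by $v$ in the excess part of the complementary sector (present when $L>2\pi$) contributes strictly positive additional Dirichlet and area cost with no compensating saving on the supporting sector. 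Hence $u$ is a minimizer whose free boundary passes through the vertex, establishing the second statement.
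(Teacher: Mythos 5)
You should first note that the paper does not prove Theorem \ref{t:2cone} at all: it quotes it from \cite{acl15}, describing the proof there as exploiting that two\--dimensional cones are isometrically flat and as constructing a cheaper competitor ``via an iterative argument that depended on length$(\gamma)$.'' Measured against that, your reduction is fine as far as it goes: Weiss monotonicity plus nondegeneracy gives a nontrivial one\--homogeneous blow-up, the ODE $f''+f=0$ with $f=0$, $|f'|=1$ at the endpoints forces each component of $\{f>0\}$ to be an arc of length exactly $\pi$ carrying $\sin(\theta-\alpha)$, and this kills $L<\pi$ (the borderline $L=\pi$ is better excluded by the positive-density estimate for the zero set of a minimizer than by your ``wrong derivative'' remark, since the slit configuration does satisfy the free boundary condition from both sides). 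The genuine gap is exactly where you flag it: for $\pi<L<2\pi$ you never produce a competitor with energy strictly below $\pi$; you announce a ``plan'' and state that the decisive estimate ``is the technical heart of the proof.'' Moreover the specific device you propose --- minimize on $B_1\setminus\overline{B_\delta}$ with zero data on the inner circle and extend by zero --- is available verbatim when $L=2\pi$, i.e.\ on the flat plane, where $r\sin^+\theta$ \emph{is} a minimizer; hence no choice of $\delta$ can work by itself, and any successful comparison must extract an improvement that quantitatively uses the deficit $2\pi-L$ and degenerates as $L\to 2\pi$. That is precisely why the proof in \cite{acl15} is an iterative construction indexed by length$(\gamma)$ rather than a one\--shot comparison; your two ``regimes'' near $L=\pi$ and near $L=2\pi$ are heuristics, with no mechanism for the first-order expansion in $2\pi-L$ and no treatment of the intermediate range.

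The second statement is also asserted rather than proved. A competitor's positivity set need not respect your decomposition into the supporting sector and its complement, so ``positive mass in the excess sector has no compensating saving'' is not an argument: Dirichlet energy saved in the supporting half-disc by letting the positivity set bulge across the rays $\theta\in\{0,\pi\}$ must be shown to be dominated by the cost incurred in the complement. To close this you need either a genuine calibration/folding argument (a $1$-Lipschitz map of the cone onto $\R^2$, isometric on the supporting sector, through which energies can be compared), or a trapping argument by continuous families of strict sub- and supersolutions pinching the candidate, in the style of Section \ref{s:min} of the present paper. As written, both halves of the theorem rest on constructions whose decisive estimates are described but not carried out.
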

      The proof of Theorem \ref{t:2cone} was the main result in \cite{acl15} and utilized that two-dimensional cones are isometrically flat. A competitor with a 
      smaller functional value was constructed via an iterative argument that depended on length$(\gamma)$. 
      
      In this paper we consider the free boundary problem on higher dimensional cones. 
      In view of the connection between area-minimizing surfaces and the one phase free boundary problem, 
      one is led to ask about area-minimizing surfaces on higher dimensional cones. Morgan \cite{m02} considered area-minimizing surfaces on 
      $n$-dimensional cones  defined by
       \begin{equation}   \label{e:cone}
        x_{n+1} = c \sqrt{\sum_{i=1}^n x_i^2}
       \end{equation}
       with $c \geq 0$, and proved that a $k$-dimensional plane through the vertex is area-minimizing if and only if 
        \begin{equation}  \label{e:relation}
         k \geq 3 \quad \text{ and } \quad \delta^2 \geq \frac{4(k-1)}{k^2},
        \end{equation}
      where $c=\delta^{-1}\sqrt{1-\delta^2}$. As a corollary \eqref{e:relation} also determines when a $k$-dimensional area minimizing hypersurface
      can pass through a cone for $3 \leq k \leq 7$. In particular,  a $3$-dimensional area minimizing hypersurface may pass through the vertex of a $4$-dimensional
      cone as given in \eqref{e:cone} and this is determined by \eqref{e:relation}. By the aforementioned drop in one dimension from area-minimizing surfaces to 
      the free boundary problem, one may expect that the lowest dimension for which a free boundary of a minimizer may pass through the vertex of a cone 
      of type \eqref{e:cone} is for a three dimensional cone, and this depends on the constant $c$. We prove this is indeed the case.

      \subsection{Main Results}
       We prove results analogous to those of area-minimizing surfaces on cones in \cite{m02}. In Section \ref{s:2ndvariation} we establish a second variational formula
       for minimizers of \eqref{e:conefunct} on a cone of type \eqref{e:cone}. With a notion of second variation one may discuss whether a solution is 
       stable. Our first result regards the stability of a homogeneous solution. 
              
       \begin{theorem}\label{t:main1}
       Let $\C$ be a three-dimensional cone of type \eqref{e:cone}. 
        There exists $0<c_0<\infty$ such that if $c \leq c_0$, then there exists a unique $($up to rotation$)$  $1$-homogeneous solution of \eqref{e:phase1} that is stable. 
        If $c>c_0$ then no $1$-homogeneous solution of \eqref{e:phase1} is stable.     
       \end{theorem}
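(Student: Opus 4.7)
The plan is to reduce the classification of $1$-homogeneous solutions to an overdetermined elliptic problem on the $2$-dimensional link of the cone, and then apply the second variation formula from Section~\ref{s:2ndvariation} to pin down the stability threshold $c_0$. In geodesic polar coordinates $(\rho,\omega)$ on a cone of type~\eqref{e:cone} with $n=3$, the induced metric takes the form $d\rho^2 + \rho^2 g_L$ with the link $L$ a round $2$-sphere of radius $\delta := 1/\sqrt{1+c^2}$. A $1$-homogeneous function $u(\rho,\omega) = \rho\,\psi(\omega)$ solves \eqref{e:phase1} on the cone if and only if
\begin{equation*}
  \Delta_L \psi + 2\psi = 0 \ \text{ in } \{\psi>0\}, \qquad |\nabla_L \psi| = 1 \ \text{ on } \partial\{\psi>0\}.
\end{equation*}
I would then run an Alexandrov moving-plane argument across geodesic equators of $L$ to force $\{\psi>0\}$ to be a spherical cap and $\psi$ to be radial about its pole; the residual Legendre-type ODE in the polar angle, combined with regularity at the pole and the Neumann free-boundary condition, uniquely determines the cap opening $\theta_0(c)$ and the normalization, giving the unique solution $\psi_c$ up to the $O(3)$ action on $L$.

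To study stability, I would specialize the second variation formula of Section~\ref{s:2ndvariation} to $\psi_c$. Decomposing test functions into Fourier modes in the azimuthal angle about the cap's pole diagonalizes the resulting quadratic form into a family of weighted one-dimensional Sturm--Liouville problems on $[0,\theta_0(c)]$, indexed by integer azimuthal mode. Stability is then equivalent to nonnegativity of the first eigenvalue in every mode. At $c=0$, the cone is flat $\R^3$, $\psi_0 = \cos\theta$ is the standard half-space solution, and its stability is a classical consequence of the theory in \cite{ac81}; continuity of the modal eigenvalues in $c$ preserves stability for $c$ small.

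For large $c$, the link shrinks and the geodesic curvature of $\partial\{\psi_c>0\}$ on $L$ grows, so that an explicit destabilizing test function --- for instance, a radial bump concentrated near the free boundary --- yields a negative second variation via the boundary term of the formula. Defining $c_0$ as the smallest $c$ at which the first eigenvalue in some mode vanishes, continuity together with the monotonicity below give $0<c_0<\infty$. The principal obstacle is establishing monotonicity of the first eigenvalue in $c$ in every angular mode; since the Sturm--Liouville domain $[0,\theta_0(c)]$ itself shifts with $c$, my plan is to rescale to a fixed interval and exploit the explicit Legendre-function representation of $\psi_c$ to reduce the eigenvalue condition in each mode to a transcendental equation in $c$, then verify monotonicity by a direct ODE comparison. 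A secondary technical point is justifying the moving-plane step with the overdetermined free-boundary condition, which is standard once sufficient regularity of $\partial\{\psi_c>0\}$ is in hand.
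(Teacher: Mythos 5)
Your reduction to the link and the overall architecture (symmetric candidate, stability for small $c$, instability for large $c$, monotonicity of the threshold in $c$) track the paper, but there is a genuine gap at the classification step. You propose to run a moving-plane argument on the link to show that \emph{every} $1$-homogeneous solution is the rotationally symmetric one. The moving plane cannot start from nothing: you need to know a priori that $\{\psi=0\}$ is a single connected set contained in a closed hemisphere (otherwise there is no equator from which to begin reflecting, and the comparison across distinct components fails). This structural information is not free --- in the paper it is exactly where stability enters the classification: the second variation inequality \eqref{e:variat} tested on radial functions, combined with Gauss--Bonnet and the fact that the free boundary has nonnegative mean curvature (Remark \ref{r:2}, which itself requires the maximum principle of Lemma \ref{l:max}), forces the complement of the positivity set to be a single convex component contained in a half-space (Lemma \ref{l:single}); only then does the moving plane (Lemma \ref{l:alexandrov}) apply. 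Without this step your uniqueness claim is unjustified, and indeed the theorem asserts uniqueness only among \emph{stable} homogeneous solutions, not among all of them.

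A second, smaller issue: your stability criterion via Sturm--Liouville problems on $[0,\theta_0(c)]$ indexed by azimuthal mode suppresses the radial variable. The quadratic form in \eqref{e:variat} lives on the full three-dimensional cone $\Gamma$, and the sharp constant is governed by a Hardy-type balance in $r$: the infimum of the Steklov quotient over annuli $B_{R_2}\setminus B_{R_1}$ is attained only in the limit $R_1\to 0$, $R_2\to\infty$ by competitors of the form $r^{-1/2}g(\phi)$. The paper exploits exactly this (Lemma \ref{l:stable}) to reduce stability to the single scalar inequality $H_1\le g'(\phi_0)/g(\phi_0)$, bypassing any modal analysis; if you insist on the modal route you must carry the radial exponent along and justify that the zero azimuthal mode with radial weight $r^{-1/2}$ is extremal. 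Your remaining steps (small-$c$ stability, an explicit destabilizing radial test function for large $c$, and monotonicity of the threshold condition in $c$ via comparison of the Legendre-type solutions) are consistent with what the paper actually does.
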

       
       From the above theorem we obtain the following  
       \begin{corollary}  \label{c:avoidvertex}
          Let $\C$ be a three-dimensional cone of type \eqref{e:cone}, and let $c_0$ be the constant in Theorem \ref{t:main1}. If $c>c_0$ and $u$ is a minimizer 
          of \eqref{e:conefunct}, then the vertex $0 \notin \partial \{u>0\}$. 
       \end{corollary}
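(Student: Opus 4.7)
My plan is to argue by contradiction via a blow-up analysis, combining the scale invariance of the cone with the instability statement of Theorem \ref{t:main1}. Suppose, toward a contradiction, that $u$ is a minimizer of \eqref{e:conefunct} on $\C$ with $c>c_0$ and that $0\in\partial\{u>0\}$. I would consider the rescalings $u_r(x):=u(rx)/r$ for $r>0$. Because the cone $\C$ defined by \eqref{e:cone} is invariant under the dilations $x\mapsto rx$, and the coefficients of the induced metric are homogeneous of degree zero, each $u_r$ is again a minimizer of \eqref{e:conefunct} on $\C$.

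Next I would invoke a Weiss-type monotonicity formula for \eqref{e:conefunct} on $\C$, modeled on the Euclidean formula of \cite{w99}, to control the blow-up. Standard Lipschitz estimates and compactness arguments for minimizers of the one-phase functional furnish a subsequence $r_j\to 0$ along which $u_{r_j}$ converges locally uniformly to a limit $u_0$ that is itself a minimizer of \eqref{e:conefunct} on $\C$. The monotonicity formula then forces $u_0$ to be $1$-homogeneous, so $u_0$ is a $1$-homogeneous solution of \eqref{e:phase1}.

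The remaining step is to note that any minimizer is stable in the sense of the second variation to be developed in Section \ref{s:2ndvariation}: if there were a direction producing strictly negative second variation, the associated one-parameter family of admissible perturbations would give a competitor strictly lowering the functional, contradicting minimality. Hence $u_0$ is a stable $1$-homogeneous solution on $\C$, and this contradicts Theorem \ref{t:main1}, which asserts that no such solution exists when $c>c_0$. This contradiction forces $0\notin\partial\{u>0\}$.

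The main obstacle I anticipate is pushing the classical blow-up apparatus — monotonicity, compactness of rescalings, preservation of minimality in the limit, and the variational inequality that encodes stability — through the singular point of the cone at the origin. Away from the vertex the cone is a smooth Riemannian manifold and the usual Euclidean arguments transfer with only minor modifications, but at the vertex itself the metric degenerates and the integrations by parts that underlie both the Weiss formula and the second variation require care. The homogeneity of the coefficients is what permits these adaptations; once the monotonicity and stability inequalities are established in a neighborhood of the vertex, the contradiction with Theorem \ref{t:main1} is immediate.
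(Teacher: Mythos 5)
Your proposal is correct and follows essentially the same route as the paper: blow up at the vertex via the Weiss monotonicity formula and compactness (Proposition \ref{p:blowup}) to obtain a $1$-homogeneous minimizer $u_0$ with $0\in\partial\{u_0>0\}$, observe that minimizers are stable, and contradict Theorem \ref{t:main1}. The only cosmetic difference is that the paper's proof unpacks that final step by first classifying the blow-up as $\Phi_c$ via Lemmas \ref{l:single} and \ref{l:alexandrov} and then citing the instability of $\Phi_c$ for $c>c_0$, whereas you invoke Theorem \ref{t:main1} directly.
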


      In the history of area-minimizing surfaces and free boundary problems it is common for stable solutions to indeed be global minimizers. Furthermore, 
      the notion of stability and area-minimizing for hyperplanes on cones coincides \cite{m02}. Therefore, it is reasonable to assume that Theorem \ref{t:main1}
      may be improved by replacing the notion of stable with minimizer. In that vein we have our second main result. 
      
      \begin{theorem}  \label{t:main2}
       Let $\C$ be a three-dimensional cone of type \eqref{e:cone}. 
        There exists $c_1$ with $0<c_1 \leq c_0<\infty$ such that if $c \leq c_1$, then there exists a minimizer $u$ of \eqref{e:conefunct} such that 
         $0 \in \partial \{u>0\}$.        
      \end{theorem}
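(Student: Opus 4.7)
The plan is to show that the stable $1$-homogeneous solution $u_\ast$ produced by Theorem \ref{t:main1} is in fact a global minimizer of \eqref{e:conefunct} once $c$ is small enough. Since $u_\ast$ is $1$-homogeneous and the cone \eqref{e:cone} and functional \eqref{e:conefunct} are dilation-invariant, minimality of $u_\ast$ reduces to the local comparison
\[
\M(u_\ast; B_1) \leq \M(v; B_1)
\]
for every admissible competitor $v$ agreeing with $u_\ast$ on $\partial B_1$, where $B_1 \subset \C$ is the geodesic unit ball around the vertex. The conclusion $0 \in \partial\{u_\ast > 0\}$ is then automatic from the $1$-homogeneity of the candidate.

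To establish the local comparison I would perturb from the flat case $c = 0$. At $c = 0$ the cone is isometric to $\R^3$ and $u_\ast$ degenerates to a half-space solution, which is a minimizer of the Euclidean one-phase functional by classical results (\cite{ac81}) together with the absence of singular minimizers in dimension three (\cite{cjk04}). These yield a quantitative coercivity estimate of the form
\[
\M(v; B_1) - \M(u_\ast; B_1) \geq \mu_0 \bigl(\|\nabla(v - u_\ast)\|_{L^2(B_1)}^2 + |\{v>0\} \triangle \{u_\ast > 0\}|\bigr)
\]
for admissible $v$ supported away from the vertex. For $c$ small and positive, both the cone metric and $u_\ast$ are small smooth perturbations of their flat counterparts outside any fixed neighborhood of the vertex, and a standard continuity argument preserves a weakened form of the coercivity on annular regions $B_1 \setminus B_\rho$.

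The main obstacle, and what forces $c_1$ to be small, is the contribution from a neighborhood of the vertex, where the cone metric is genuinely singular and the perturbative picture breaks down. I would handle this via a capacity/cut-off argument: the linear growth of $u_\ast$ and standard nondegeneracy for one-phase minimizers give $\M(u_\ast; B_\rho), \M(v; B_\rho) = O(\rho^3)$, so any savings a competitor could extract near the vertex are cubic in $\rho$. Choosing $\rho$ small enough that the annular gain dominates and then letting $\rho \to 0$ closes the argument, and the smallness of $c$ required for the dominance is precisely the condition $c \leq c_1$ of the statement.
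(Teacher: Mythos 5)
There is a genuine gap, in fact two. First, the ``quantitative coercivity estimate'' you attribute to the flat case is not a classical result and is false as stated: for a one-phase minimizer such as the half-space solution, the energy excess of a competitor whose free boundary is a small smooth perturbation of amplitude $\delta$ is of order $\delta^2$ (second variation), while $\|\nabla(v-u_\ast)\|_{L^2}^2+|\{v>0\}\triangle\{u_\ast>0\}|$ contains a term of order $\delta$, so no inequality with a uniform $\mu_0>0$ can hold. Consequently there is nothing of the claimed strength to perturb from when $c>0$, and even a correct (quadratic, degenerate) stability statement would not transfer across the $O(c^2)$ metric perturbation without substantial additional work---which is exactly the difficulty the theorem is about. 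Second, the cut-off argument at the vertex does not close. Minimality or coercivity of $u_\ast$ on the annulus $B_1\setminus B_\rho$ cannot be invoked for a general competitor $v$, since $v$ need not agree with $u_\ast$ on the inner sphere $\partial B_\rho$; and for a competitor that differs from $u_\ast$ only inside $B_\rho$, the ``annular gain'' is zero while the possible saving inside $B_\rho$ is exactly the quantity you are trying to bound. By the $1$-homogeneity of $u_\ast$ and the scale invariance of \eqref{e:conefunct}, the question of minimality in $B_\rho$ with respect to the candidate's own boundary data is identical to the question in $B_1$, so ``letting $\rho\to 0$'' is circular: the $O(\rho^3)$ bound on the potential saving is not dominated by any positive term.

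The paper takes a different and non-perturbative route that you would need to replace these steps with: it proves that $\Phi_c$ is the \emph{unique} solution of \eqref{e:phase1} in $B_1$ with its own boundary data, by trapping any such solution between an explicit continuous family of strict subsolutions $rf_{1,c}(\phi)-\e r^{-1/2}(M-\cos\phi)$ (Lemma \ref{l:below1}) and a family of supersolutions built from $rf_{1,c}(\phi)+\e r^{-1/2}(M-\cos\phi)$ pasted with a harmonic replacement on a wedge region (Lemma \ref{l:above1}, using the $C^1$ convergence results of the appendix), sliding $\e$ and using the comparison principle at a first touching point. Since a minimizer with boundary data $\Phi_c$ exists and is a solution of \eqref{e:phase1} by Proposition \ref{p:solution}, uniqueness forces the minimizer to coincide with $\Phi_c$, whose free boundary passes through the vertex. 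The smallness of $c$ enters through explicit sign conditions in the sub/supersolution computations (the terms $I+II+III$ in \eqref{e:three}), not through a coercivity-versus-$\rho^3$ competition.
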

      
      The significance of Theorem \ref{t:main1} is that $c_1>0$ which shows that three is the lowest dimension for which the free boundary of a minimizer 
      passes through the vertex of a non-flat right circular cone. We expect that $c_1=c_0$ in Theorem \ref{t:main2}, but a proof that $c_1=c_0$ may 
      need to rely on numerical analysis such as in \cite{dj09}. 
      
      Many of the results in this paper apply to higher-dimensional cones of type \eqref{e:cone}. 
      In Section \ref{s:symmetric} we present a symmetric $1$-homogeneous solution $\Phi_c$ to \eqref{e:phase1} on $\C$ where the vertex $0 \in \partial\{u>0\}$. 
      This symmetric solution $\Phi_c$ has a variant in each dimension. 
      In Section \ref{s:stable} we show that if $v$ is any homogeneous stable solution with $0 \in \partial\{v>0\}$,  and $\C$ is three-dimensional, 
      then $v \equiv \Phi_c$. 
       Theorems \ref{t:main1} and \ref{t:main2} are reduced to showing whether the specific solution $\Phi_c$ is stable or is a minimizer. The symmetric candidate $\Phi_c$ is analogous
       to hyperplanes on cones as in  \cite{m02}. 
      As previously mentioned, there exist non-hyperplane cones that are area-minimizing; consequently,  the results in \cite{m02} for hyperplanes can only be used 
      to classify when an area-minimizing hypersurface of dimension $k$ passes through the vertex of a cone when $k \leq 7$. Similarly, in order to prove Theorems \ref{t:main1} and
      \ref{t:main2} on higher-dimensional cones, one would have to show any stable homogeneous solution $v\equiv \Phi_c$. Although we only present results in this paper
      for the symmetric solution $\Phi_c$ on a three-dimensional cone, the same techniques will apply in higher dimensions. We state this in the following remark which is 
      a partial analogue to Theorem 1.1 in \cite{m02} regarding hyperplanes on cones. 
      
      \begin{remark}
       Let $\C$ be a cone of type \eqref{e:cone}. Let $\Phi_c$ be the symmetric solution as defined in Section \ref{s:symmetric}. 
       If $n \geq 3$, then there exists two constants $0<c_1\leq c_0 < \infty$
       depending on dimension $n$ 
       such that $\Phi_c$ is stable if and only if $c\leq c_0$ and $\Phi_c$ is a minimizer of \eqref{e:conefunct} if $c \leq c_1$. 
      \end{remark}

      Finally, one may consider Lipschitz manifolds with isolated singularities. Suppose $\M$ is a three-dimensional manifold 
      and $\M$ is smooth in a neighborhood $\Omega \setminus \{x_0\}$.
      If there exists a sequence $r_k \to 0$ such that a rescaling $\M_k \to \C$ where $\C$ is of type \eqref{e:cone}, then the results of Theorem
      \ref{t:main2} will apply in a neighborhood of $x_0$. This is because if the free boundary of a minimizer  $u$ passes through $x_0$, 
      then using the regularity results in \cite{pt16}, one may obtain via a blow-up procedure a minimizer $u_0$ on $\C$ and the vertex $0 \in \partial\{u_0 >0\}$.

      \subsection{Outline and Notation} 
          The outline of this paper is as follows. In Section \ref{s:prelim} we define the notion of a solution to the free boundary problem and the corresponding functional. 
          We also state some preliminary results necessary later in the paper. In Section \ref{s:2ndvariation} we give a second variation formula for $1$-homogeneous
          solutions. In Section \ref{s:symmetric} we present a symmetric $1$-homogeneous solution $\Phi_c$. In Section \ref{s:stable} we classify when $\Phi_c$ is 
          a stable solution and prove Theorem \ref{t:main1} and Corollary \ref{c:avoidvertex}. In Section \ref{s:min} we show that for $c$ small but still positive the 
          solution $\Phi_c$ is a minimizer of the functional \eqref{e:conefunct} and thus prove Theorem \ref{t:main2}.

          We will use the following notation throughout the paper. 
          \begin{itemize}
           \item $n$ refers to dimension. 
           \item $\C$ is always a cone of type \eqref{e:cone}. 
           \item $c$ is always the constant appearing in the definition of a cone in \eqref{e:cone}. 
           \item $\nabla_c$ refers to the gradient on $\C$ arising from the inherited metric as explained in Section \ref{s:prelim}. 
           \item $\Delta_c$ refers to the Laplace-Beltrami operator on $\C$ as explained in \ref{s:prelim}. 
           \item $\nabla_{\theta}$ refers to the gradient on the sphere. 
           \item $\Delta_{\theta}$ represents the Laplace-Beltrami on the sphere. 
           \item $\Gamma := \{u>0\}$ where $u$ is $1$-homogeneous solution to \eqref{e:phase1}. 
          \end{itemize}

\section{Preliminaries}  \label{s:prelim}
 We consider a $3$-dimensional cone $\C$ in $\R^4$ given by 
  \begin{equation} \label{e:cone3}
  \C:= \{(y_1,y_2,y_3,y_4) \in \R^4 : y_4 = c\sqrt{y_1^2 + y_2^2 + y_3^2} \}.
  \end{equation} 
  We study minimizers of the functional 
    \begin{equation}  \label{e:conefunct}
     J(v,\Omega):= \int_{\Omega} |\nabla_c v|^2 + \chi_{\{v>0\}},
     \end{equation}
  where $\nabla_c v$ is the gradient on the cone $\C$ away from the vertex. 
   As shown later in Proposition \ref{p:solution}, minimizers of \eqref{e:conefunct} over $\Omega \subseteq \C$ are solutions to     
   \begin{equation}   \label{e:phase1}
     \begin{cases}
      u \geq 0 \\
      u\text{ is continuous in  } \Omega \\
           \Delta_c u =0  \text{ in } \{u>0\} \\
      \partial\{u>0\} \setminus \{0\} \text{ is locally smooth} \\
         |\nabla_c u|=1  \text{ on } \partial \{u>0\}\setminus \{0\},
   \end{cases}
   \end{equation}  
  where $\nabla_c$ is the gradient on $\C$ from the inherited metric, and 
  $\Delta_c$ is the Laplace-Beltrami on $\C$.  The above class of solutions may seem restrictive; however, not only will all minimizers of \eqref{e:conefunct} 
  be solutions to \eqref{e:phase1}, but also the common notion of viscosity solution as in  \cite{c87,c88,c89} would also be a solution to 
  \eqref{e:phase1} since the cone $\C$ is three-dimensional.

    We consider two main parametrizations of the cone $\C$. Using spherical coordinates we have 
  \begin{equation}  \label{e:spherical}
   y_1 = r \cos \theta \sin \phi, \qquad
    y_2 = r \sin \theta \sin \phi, \qquad
    y_3 = r \cos \phi, \qquad
    y_4 = cr.
  \end{equation}

  Under these coordinates the area form is $\sqrt{\text{det}(g)}=r^2 \sin \phi \sqrt{1+c^2}$. 
  The local coordinates $g^{ij}$ are given by 
\[ 
 \left( 
        \begin{array}{ccc}
          r^{-2} \sin^{-2} \phi  & 0       & 0 \\
          0                             & r^{-2} & 0 \\
          0                             & 0       & (1+c^2)^{-1} 
        \end{array} 
  \right),
\] 
and in these local coordinates we minimize
 \[
  \int_{\Omega} \sqrt{g} g^{ij} u_i u_j + \sqrt{g} \chi_{\{u>0\}}. 
 \]
 Any minimizer will satisfy 
 \[
  \begin{cases}
    \frac{1}{\sqrt{g}} \partial_j (\sqrt{g} g^{ij} u_i) &=0  \quad\text{ in }  \{u>0\} \\
   \sqrt{g} g^{ij} u_iu_j &= \sqrt{g}   \quad\text{ on } \partial\{u>0\}\setminus \{0\},
  \end{cases}
 \]
and so a minimizer of \eqref{e:conefunct} is a solution to \eqref{e:phase1}. We note that the first condition is written out as 
  \begin{equation}  \label{e:beltrami}
  \frac{1}{1+c^2} \left(u_{rr} + \frac{2}{r}u_r  \right) + \frac{1}{r^2}
   \left(\frac{u_{\theta \theta}}{\sin \phi} + \frac{\cos \phi}{\sin \phi} u_{\phi} + u_{\phi \phi} \right) =0
  \end{equation}
  in the set $\{u>0\}$.

 We may also work in the coordinates 
  \begin{equation}  \label{e:projectmetric}
    y_1 = x_1, \qquad
    y_2 = x_2, \qquad
    y_3 = x_3, \qquad
    y_4 = c \sqrt{x_1^2 + x_2^2 + x_3^2}. 
  \end{equation}
 In these coordinates the area form is $\sqrt{g}=\sqrt{1+c^2}$ and the local coordinates $g^{ij}$ are given by 
  \[ 
  \frac{1}{(1+c^2)r^2}
 \left( 
        \begin{array}{ccc}
          r^{2}+c^2(x_2^2+x_3^2)  & -c^2 x_1x_2                       & -c^2 x_1x_3 \\
          -c^2 x_1 x_2                    & r^2 + c^2(x_1^2 + x_3^2)  & -c^2 x_2 x_3 \\
          -c^2 x_1 x_3                    & -c^2 x_2x_3                       & r^2 + c^2(x_1^2 + x_2^2) 
        \end{array} 
  \right).
\]  
 
 We define $\displaystyle \C_r := \{(y_1,y_2,y_3,y_4) \in \C : \sqrt{y_1^2 +y_2^2 + y_3^2} <r\}$. 
 From the regularity results in \cite{pt16}, we have the following regarding the continuity of the minimizer as well as the growth away from a free boundary point at the vertex. 
  \begin{proposition}  \label{p:growth}
   Let $u$ be a minimizer of \eqref{e:conefunct} on $\Omega \subseteq \C$, then $u$ is H\"older continuous inside $\Omega$. 
   Furthermore, if  the vertex $\{0\} \in \partial \{u>0\}$,
   then there exists two constants $C_1,C_2$ depending on $u$ such that 
    \[
         C_1 r \leq \sup_{\C_r} u \leq C_2 r. 
    \]
  \end{proposition}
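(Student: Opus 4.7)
My plan is to work in the projection coordinates \eqref{e:projectmetric}, which identify $\C$ with a Euclidean ball $B \subset \R^3$ carrying the metric $g^{ij}$ displayed above. In these coordinates $\sqrt{g}=\sqrt{1+c^2}$ is a constant and the $3\times 3$ matrix $g^{ij}$ has bounded entries and is uniformly elliptic on $B$ with eigenvalues $1/(1+c^2)$ and $1$, so the functional \eqref{e:conefunct} becomes a standard one-phase Alt--Caffarelli functional with bounded measurable coefficients on a Euclidean domain. The H\"older continuity assertion then follows directly from the regularity theory for minimizers of such functionals developed in \cite{pt16}.

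For the upper bound $\sup_{\C_r} u \leq C_2 r$, I would invoke the linear (Lipschitz-type) growth from free boundary points established in \cite{pt16}, the divergence-form analogue of the classical estimate in \cite{ac81}. The rough idea: since $u$ is a nonnegative subsolution across the vertex and $|\nabla_c u|=1$ on the regular part of $\partial\{u>0\}$, comparison with harmonic replacements on small balls gives $u(x) \leq C\,\operatorname{dist}_c(x,\partial\{u>0\})$; evaluating this at the point realizing $\sup_{\C_r} u$ and using that $0 \in \partial\{u>0\}$ gives the claimed linear bound in $r$. For the lower bound $\sup_{\C_r} u \geq C_1 r$ I would carry out the standard non-degeneracy argument: supposing $\sup_{\C_r} u \leq \epsilon r$ for small $\epsilon$, compare $u$ with the competitor $v=(u-\epsilon r)_+$ on $\C_{r/2}$; the gain from removing $\chi_{\{u>0\}}\setminus\chi_{\{v>0\}}$ is of order $r^3$, whereas the increase in Dirichlet energy is of order $\epsilon^2 r^3$ after a Caccioppoli-type estimate, which contradicts minimality once $\epsilon$ is small enough.

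The main obstacle in either bound, and the reason earlier regularity results do not suffice, is the topological singularity at the vertex of $\C$; it is precisely the contribution of \cite{pt16} to push the non-degeneracy and linear-growth bounds all the way up to the vertex in the divergence-form, bounded-measurable-coefficient setting. Once that framework is in hand, both halves of the proposition reduce to the standard divergence-form versions of the arguments in \cite{ac81}.
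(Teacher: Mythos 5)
Your proposal matches the paper's approach: the paper gives no proof of Proposition \ref{p:growth} at all, simply attributing both the H\"older continuity and the two-sided linear growth to the regularity results of \cite{pt16}, and your reduction via the projection coordinates \eqref{e:projectmetric} to a uniformly elliptic, bounded measurable divergence-form Alt--Caffarelli functional (with eigenvalues $1/(1+c^2)$ and $1$, correctly computed) is exactly the reason that citation applies. The additional sketches of the linear-growth and non-degeneracy arguments are consistent with the standard proofs being imported from \cite{pt16}.
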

  
  We also have 
   \begin{proposition}  \label{p:solution}
    Let $u$ be a minimizer of \eqref{e:conefunct} in $\Omega$. Then $u$ is a solution to \eqref{e:phase1}. 
   \end{proposition}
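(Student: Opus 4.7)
The plan is to verify the five clauses of \eqref{e:phase1} in turn, using the minimality of $u$ against carefully chosen competitors. Non-negativity is immediate: the positive part $u^+$ satisfies $|\nabla_c u^+|^2 \le |\nabla_c u|^2$ almost everywhere and $\chi_{\{u^+>0\}}=\chi_{\{u>0\}}$, so $u^+$ is an admissible competitor with $J(u^+,\Omega)\le J(u,\Omega)$, forcing $u=u^+\ge 0$. Continuity of $u$ is already recorded in Proposition \ref{p:growth} via \cite{pt16}. Harmonicity on $\{u>0\}$ follows from inner variations: for $\varphi\in C_c^\infty(\{u>0\})$, continuity of $u$ places $\operatorname{supp}\varphi\subset\{u\ge c_0\}$ for some $c_0>0$, so for small $\epsilon$ the set $\{u+\epsilon\varphi>0\}$ coincides with $\{u>0\}$ on $\operatorname{supp}\varphi$ and the measure term in $J$ is constant in $\epsilon$. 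Differentiating $J(u+\epsilon\varphi,\Omega)$ at $\epsilon=0$ produces $\int_\Omega \sqrt{g}\,g^{ij}u_i\varphi_j\,dx=0$, which is the weak form of $\Delta_c u=0$; elliptic regularity then upgrades $u$ to be smooth in $\{u>0\}$.

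The more delicate clauses are local smoothness of $\partial\{u>0\}\setminus\{0\}$ and the free boundary condition $|\nabla_c u|=1$. The idea is to reduce, away from the vertex, to known variable-coefficient free boundary theory combined with the absence of singular points in dimension three. Fix $p\in\partial\{u>0\}\setminus\{0\}$ and work in the chart \eqref{e:projectmetric}, in which the metric is smooth in a neighborhood of $p$; the restriction of $u$ minimizes the divergence-form functional $\int\sqrt{g}\,g^{ij}v_iv_j+\sqrt{g}\,\chi_{\{v>0\}}$. A blow-up $u_{r_k}(x):=r_k^{-1}u(p+r_kx)$ with $r_k\to 0$ yields, in the limit, a $1$-homogeneous global minimizer of the frozen-coefficient functional at $p$, which after a linear change of variables becomes the classical one-phase functional on $\R^3$. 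By the $k^*>3$ theorem of \cite{cjk04}, this blow-up must be a half-plane solution, so $p$ is a flat free boundary point of the variable-coefficient problem. The flat-implies-smooth result of \cite{dfs16} (or \cite{fs07} in the smooth-coefficient regime available here) then produces a $C^{1,\alpha}$ neighborhood of $p$ in $\partial\{u>0\}$, and bootstrap yields $C^\infty$. Once smoothness is in hand, a domain variation $v_\epsilon(x)=u(\Psi_\epsilon(x))$ with $\Psi_\epsilon$ a smooth family of diffeomorphisms compactly supported away from the vertex delivers the Euler-Lagrange boundary condition $|\nabla_c u|=1$ on $\partial\{u>0\}\setminus\{0\}$.

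The main obstacle is the free boundary regularity step. The existing no-singular-set theorem $k^*>3$ of \cite{cjk04} is stated for the flat Euclidean one-phase problem, so care is required to confirm that blow-up limits of minimizers of our variable-coefficient functional on $\C$ really are minimizers of the classical flat functional, and that the resulting flatness can be fed into the flat-implies-smooth machinery of \cite{dfs16,fs07} at every free boundary point away from the vertex. Once this transfer is carried out, the remaining clauses follow by standard Riemannian adaptations of arguments that have appeared in the free boundary literature.
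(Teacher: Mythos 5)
Your proposal follows essentially the same route as the paper's own proof: continuity from Proposition \ref{p:growth} via \cite{pt16}, harmonicity from inner variations in the positivity set, smoothness of $\partial\{u>0\}\setminus\{0\}$ by combining the no-singular-cone result of \cite{cjk04} in dimension three with the variable-coefficient flat-implies-smooth theory of \cite{fs07}, and the free boundary condition from domain variations as in \cite{ac81}. You merely spell out in more detail (blow-up, freezing coefficients, bootstrap) what the paper compresses into a single sentence, and the transfer issue you flag at the end is exactly the point the paper also leaves to the cited references.
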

   
   \begin{proof}
    From Proposition \ref{p:growth} minimizers are continuous. Then by considering variations in the positivity set, it follows that $\Delta_c u=0$ in $\{u>0\}$. 
    Furthermore, the coefficients $g^{ij}$ are smooth away from the vertex of the cone. Therefore, when the cone is three-dimensional one may combine
    the results in \cite{fs07} and \cite{cjk04} to conclude that $\partial \{u>0\}$ is smooth away from the vertex. It then follows from the domain variation 
    techniques in \cite{ac81} that the free boundary relation is satisfied away from the vertex so that any minimizer $u$ of \eqref{e:conefunct}
    is a solution to \eqref{e:phase1}. 
   \end{proof}

 We also have the following Weiss-type monotonicity formula. 
  \begin{proposition}  \label{p:weiss}
    Let $u$ be a minimizer to \eqref{e:conefunct} on $\Omega \subseteq \C$ and $\C_{R} \subset \Omega$.  Assume that the vertex $0 \in \partial \{u>0\}$. 
    Then the  functional 
     \[
      W(r,u):= \frac{1}{r^n} \int_{\C_r} |\nabla_c u|^2 + \chi_{\{u>0\}} - \frac{1}{r^{n+1}}\int_{\partial \C_r} u^2
     \] 
     is monotone increasing in $r$ for $r\leq R$. 
     Furthermore, if $0<r_1<r_2\leq R$ and $W(r_1,u)=W(r_2,u)$ if and only if $u$ is homogeneous of degree $1$ on $\C_{r_2} \setminus \C_{r_1}$. 
  \end{proposition}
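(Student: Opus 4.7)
The plan is to adapt the standard Weiss monotonicity argument to the cone $\C$, taking advantage of the invariance of $\C$ under the dilation $x \mapsto \lambda x$ and the fact that all metric quantities scale accordingly. I will work in the projected parametrization \eqref{e:projectmetric}, in which $\sqrt{g} = \sqrt{1+c^2}$ is constant and the entries of $g^{ij}$ depend on the base point only through the angular variable $x/|x|$; in particular both are invariant under dilation. Setting $u_\lambda(x) := \lambda^{-1} u(\lambda x)$, a change of variables in \eqref{e:conefunct} then gives the rescaling identities
\begin{equation*}
J(u_\lambda, \C_1) = \lambda^{-n} J(u, \C_\lambda), \qquad \int_{\partial \C_1} u_\lambda^2\, d\sigma = \lambda^{-n-1}\int_{\partial \C_\lambda} u^2\, d\sigma,
\end{equation*}
so that $W(\lambda, u) = J(u_\lambda, \C_1) - \int_{\partial\C_1} u_\lambda^2\, d\sigma$.

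Next I would exploit minimality against the one-homogeneous extension competitor. In spherical coordinates on $\C$, the cone metric splits as $(1+c^2)\, dr^2 + r^2 \, d\theta^2$, where $d\theta^2$ is the round metric on the unit sphere. Define $\tilde u$ to equal $u$ on $\C_R \setminus \C_r$ and to be the one-homogeneous extension $\tilde u(x) := (|x|/r)\, u(rx/|x|)$ inside $\C_r$; then $\tilde u = u$ on $\partial \C_r$, and minimality of $u$ yields $J(u, \C_r) \leq J(\tilde u, \C_r)$. Writing the right-hand side in terms of the trace $\phi(\theta) := r^{-1} u(r\theta)$ on the unit sphere and carrying out the radial integration explicitly (which decouples because $\tilde u$ is homogeneous) converts this inequality, after dividing by $r^n$ and subtracting the boundary term, into the monotonicity of $W(\cdot, u)$. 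To identify $W'(r)$ as a boundary integral I would also differentiate $\lambda \mapsto W(\lambda, u)$ at $\lambda = r$ using the identity $\partial_\lambda u_\lambda|_{\lambda = r}(x) = r^{-2}(rx\cdot\nabla u(rx) - u(rx))$; combining this with the Euler--Lagrange equation $\Delta_c u_r = 0$ in $\{u_r > 0\}$ and the free boundary condition $|\nabla_c u_r| = 1$ on $\partial\{u_r > 0\}$, the interior first-variation contributions collapse to boundary terms on $\partial \C_1$, which after pulling back to $\partial \C_r$ combine into
\begin{equation*}
W'(r) = \frac{C}{r^{n+2}} \int_{\partial \C_r} (x\cdot \nabla u - u)^2\, d\sigma
\end{equation*}
for some positive constant $C = C(c)$. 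This is manifestly nonnegative; equality $W(r_1, u) = W(r_2, u)$ forces $W'(r) \equiv 0$ on $(r_1, r_2)$, so that $x\cdot \nabla u = u$ pointwise, which integrates along cone rays to the homogeneity relation $u(\lambda x) = \lambda u(x)$ on $\C_{r_2} \setminus \C_{r_1}$.

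The principal technical obstacle is handling the vertex $0 \in \partial \{u > 0\}$, where the coefficients $g^{ij}$ are singular and the integration by parts producing the boundary formula for $W'$ is a priori valid only on the annular region $\C_r \setminus \C_\rho$. The linear growth bound $\sup_{\C_\rho} u = O(\rho)$ from Proposition \ref{p:growth} yields $J(u, \C_\rho) = O(\rho^n)$ and $\int_{\partial \C_\rho} u^2\, d\sigma = O(\rho^{n+1})$, which are precisely the decay rates needed to send $\rho \to 0$ and justify the limiting identity. A secondary concern is that $\tilde u$ may have a positivity set different from that of $u$ near the vertex, but this is harmless for minimality as long as $\tilde u = u$ on $\partial \C_r$, which holds by construction.
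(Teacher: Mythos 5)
Your proposal is correct and follows essentially the route the paper itself indicates: the paper only sketches this proof, remarking that the radial spherical parametrization of $\C$ (with $\sqrt{g}$ constant and the metric splitting as $(1+c^2)\,dr^2+r^2\,d\theta^2$) lets the usual Weiss argument via radial domain variations go through, which is exactly the dilation-invariance, homogeneous-extension-competitor, and $W'(r)$ computation you describe. Your treatment of the vertex via the linear growth bound of Proposition \ref{p:growth} (together with the implicit Caccioppoli estimate giving $J(u,\C_\rho)=O(\rho^n)$) correctly supplies the only nonstandard technical point.
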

  The proof of Proposition \ref{p:weiss} relies on a radial domain variation. Since $\C$ may be parametrized by radial spherical coordinates, the usual proof
  will go through. For two-dimensional cones this was shown in \cite{acl15}. The same proof has also been adapted for a more complicated weight 
  (see \cite{a12,ALP}). As a consequence of Propositions \ref{p:growth} and \ref{p:weiss} we have the following
   \begin{proposition}   \label{p:blowup}
    Let $u$ be a minimizer of \eqref{e:conefunct} on $\Omega \subseteq \C$ and assume the vertex $0 \in \partial\{u>0\}$. Then there exists 
    a sequence $r_k \to 0$ and a $1$-homogeneous solution $u_0$ of \eqref{e:phase1} such that $0 \in \partial\{u_0 >0\}$ and 
    the rescaled functions $u_{k}:=u(r_k x)/r_k$ converge uniformly on compact subsets of $\C$ to $u_0$.  Furthermore, $u_0$ will be a minimizer
    of \eqref{e:conefunct} on all $\Omega \Subset \C$. 
   \end{proposition}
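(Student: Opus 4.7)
The plan is to follow the standard blowup scheme, using Propositions \ref{p:growth} and \ref{p:weiss} as the main inputs, with the cone-geometry and minimality preserved by the natural scaling of $\C$ about its vertex.

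First I would observe that the cone $\C$ defined by \eqref{e:cone3} is invariant under the dilations $x\mapsto r x$, and the metric rescales so that if $u$ minimizes $J(\cdot,\C_R)$ then $u_r(x):=u(rx)/r$ minimizes $J(\cdot,\C_{R/r})$ (the $r^{n+1}$ from the volume form and the Dirichlet term cancels once one divides by $r^n$). Next, using the upper bound $\sup_{\C_r}u\le C_2r$ from Proposition \ref{p:growth} together with the interior Lipschitz regularity of minimizers (standard for \eqref{e:conefunct} away from the vertex; one may invoke the results in \cite{pt16} or \cite{fs07}), the family $u_k(x):=u(r_kx)/r_k$ is uniformly bounded and uniformly Lipschitz on each compact subset of $\C\setminus\{0\}$, and uniformly H\"older (with the bound $|u_k|\le C_2|x|$) globally on compact sets. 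By Arzel\`a--Ascoli, after passing to a subsequence, $u_k\to u_0$ uniformly on compact subsets of $\C$, with $u_0(0)=0$ and $u_0\ge 0$.

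Next I would show $u_0$ is a minimizer of \eqref{e:conefunct} on every $\Omega'\Subset\C$. This is the step requiring some care. For a competitor $v$ supported in $\Omega'$ with $v=u_0$ on $\partial\Omega'$, one builds a competitor $v_k$ for $u_k$ by interpolating between $v$ and $u_k$ in a thin annular neighborhood of $\partial\Omega'$ (uniform convergence makes the transition cost go to zero). Lower semicontinuity of $\int|\nabla_cv|^2$ under weak $H^1$ convergence handles the Dirichlet part, while the measure term $|\{v_k>0\}|\to|\{v>0\}|$ is controlled by using non-degeneracy of minimizers (from Proposition \ref{p:growth}, and more generally from \cite{pt16}), which prevents positivity sets from collapsing in the limit. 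The resulting inequality $J(u_0,\Omega')\le J(v,\Omega')$ gives minimality, hence by Proposition \ref{p:solution} $u_0$ solves \eqref{e:phase1}.

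To get $1$-homogeneity I would invoke the Weiss monotonicity formula of Proposition \ref{p:weiss}. Since $W(\cdot,u)$ is monotone increasing with a finite limit as $r\to0^+$ (the limit exists by monotonicity, and finiteness follows from the Lipschitz and non-degenerate growth bounds), and since $W(\rho,u_k)=W(\rho r_k,u)$ by the scaling invariance of $W$, passing to the limit gives $W(\rho,u_0)=W(0^+,u):=\lim_{s\to0^+}W(s,u)$ for every $\rho>0$. Thus $\rho\mapsto W(\rho,u_0)$ is constant, and the equality case of Proposition \ref{p:weiss} forces $u_0$ to be $1$-homogeneous on all of $\C$. Finally, $0\in\partial\{u_0>0\}$ follows from the lower growth bound $\sup_{\C_1}u_k=\sup_{\C_{r_k}}u/r_k\ge C_1$, which passes to the limit as $\sup_{\C_1}u_0\ge C_1>0$, so $u_0\not\equiv0$; combined with $u_0(0)=0$ and $1$-homogeneity this yields the free boundary condition at the vertex.

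The main obstacle is the minimality of the limit, specifically controlling the measure term $\int\chi_{\{v_k>0\}}$ under weak convergence; this is where non-degeneracy (Proposition \ref{p:growth} and the results of \cite{pt16}) is essential, and where one must build the boundary interpolation carefully so that neither the Dirichlet cost of transition nor the measure of the transition strip obstructs the limit comparison.
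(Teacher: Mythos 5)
Your proposal is correct and follows essentially the same route as the paper: compactness of the rescalings from Proposition \ref{p:growth} and the H\"older estimates of \cite{pt16} via Arzel\`a--Ascoli, minimality of the limit by the standard competitor-gluing argument, and $1$-homogeneity from the scaling property and constancy of the Weiss functional in Proposition \ref{p:weiss}. The paper's own proof is just a terse version of this, labeling the compactness and limit-minimality steps as standard, so your added detail (non-degeneracy for the measure term, the lower growth bound giving $0\in\partial\{u_0>0\}$) is a faithful expansion rather than a different argument.
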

   
   \begin{proof}
    The following proof is standard. From Proposition \ref{p:growth}, the H\"older estimates in \cite{pt16}, and the Arela-Ascoli Theorem, 
    there exists $r_k \to 0$ and $u_0$ such that $u_{r_k} \to u_0$ locally uniformly. That $u_0$ is a minimizer of \eqref{e:conefunct} and therefore also a solution to 
    \eqref{e:phase1} is    standard. From the rescaling property $W(\rho r, u)= W(\rho, u_r)$ of the Weiss functional and the monotonicity of $W(r,u)$ it follows that 
     $W(r, u_0) = W(0+, u)$ for all $r$, and so $u_0$ is homogeneous of degree $1$.  
   \end{proof}

\section{A Second Variation Formula}   \label{s:2ndvariation}
 In this section we adapt the ideas in \cite{cjk04} to obtain a second variation formula. From Proposition \ref{p:blowup} we restrict ourselves to the study 
 of $1$-homogeneous solutions $u$ to \eqref{e:phase1}. We denote the positivity set of a $1$-homogeneous solution $u$ to \eqref{e:phase1}
 by $\Gamma := \{u>0\}$. The free boundary $\partial \Gamma$ is a cone and we denote the mean curvature by $H$. 
 The main result in this section is the following Lemma that gives a second variation formula for $1$-homogeneous solutions to \eqref{e:phase1}
 that are also minimizers of \eqref{e:conefunct}. We define 
  $\mathcal{F}_R := \{F \in C^{\infty}(\overline{\Gamma}): F(x)=0 \text{ if }   |x| \notin (R^{-1},R)\}$ and $\mathcal{F}:= \cup \mathcal{F}_R$.
 \begin{lemma}  \label{l:main}
  Let $\C$ be a cone of type \eqref{e:cone3}. Let $u$ be a $1$-homogeneous minimizer of \eqref{e:conefunct}. Let $g^{ij}$ be the local 
  coordinates from \eqref{e:projectmetric}. Then 
   \begin{equation}  \label{e:variat}
    \int_{\partial \Gamma} H F^2 d \sigma \leq \int_{\Gamma}   g^{ij} F_i F_j  \quad \text{ for every } F \in \mathcal{F}
   \end{equation}
 \end{lemma}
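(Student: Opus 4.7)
The plan is to adapt the Caffarelli--Jerison--Kenig second variation argument to the cone setting, obtaining the stability inequality from the non-negativity of the second-order term in a suitable family of competitors constructed by normally perturbing the free boundary. Given $F \in \mathcal{F}_R$, I would first extend $F$ smoothly into a collar neighborhood of $\partial \Gamma$ in $\overline{\Gamma}$, keeping compact support in the annular region $\C_R \setminus \C_{R^{-1}}$, and then consider the inner variation $\Phi_t(x) = x + t F(x)\nu(x)$, where $\nu$ denotes the unit outward conormal to $\partial \Gamma$ with respect to the cone metric. Define the competitor $v_t$ on $\Phi_t(\Gamma)$ either as $u \circ \Phi_t^{-1}$ or, to sharpen the resulting inequality, as the harmonic replacement in $\Phi_t(\Gamma)$ with boundary data $0$ on $\Phi_t(\partial\Gamma)$ and the values of $u$ elsewhere. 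Extend $v_t$ by $0$ outside $\Phi_t(\Gamma)$; then $v_t$ agrees with $u$ outside the support of $F\nu$ and has positivity set $\Phi_t(\Gamma)$.

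By minimality $J(v_t, \Omega) \geq J(u, \Omega)$, so Taylor expansion in $t$ yields
\[
J(v_t) = J(u) + t\, I_1 + \tfrac{t^2}{2}\, I_2 + O(t^3), \qquad I_1 = 0, \qquad I_2 \geq 0.
\]
The first-order term $I_1$ combines the boundary contribution $\int_{\partial\Gamma} F\,d\sigma$ from the variation of the mass term with the inner variation of the Dirichlet energy; these cancel via the free boundary condition $|\nabla_c u| = 1$ on $\partial\Gamma \setminus \{0\}$, which holds in the classical sense by Proposition \ref{p:solution}. The second-order term $I_2$ then splits into two pieces: the second variation of the volume of $\Phi_t(\Gamma)$ under the normal deformation $F\nu$ produces the boundary integral $\int_{\partial \Gamma} H F^2\,d\sigma$, with $H$ the mean curvature of $\partial \Gamma$ in the cone metric, while the inner variation of the Dirichlet energy, together with the harmonic-replacement choice of $v_t$ (which makes the competitor optimal among functions with the prescribed boundary values), yields an interior contribution bounded above by $\int_\Gamma g^{ij} F_i F_j$. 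Combining these and passing to the inequality gives \eqref{e:variat}.

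The main obstacle will be the careful book-keeping on the cone. The metric coefficients $g^{ij}$ in \eqref{e:projectmetric} are singular at the vertex, so the cut-off requirement $F \in \mathcal{F}_R$ is essential: it guarantees that the vector field $F\nu$ vanishes near the vertex and that all integration by parts used to simplify $I_2$ avoids the singularity. The mean curvature $H$ must be computed intrinsically with respect to the cone metric rather than the ambient Euclidean metric of $\R^4$, and the standard Euclidean second variation formula must be recast using $\Delta_c$ and $\nabla_c$ so that the identity $|\nabla_c u|^2 = 1$ on $\partial\Gamma$ can be used to cancel the first-order terms. Finally, the direction of the inequality in \eqref{e:variat} relies on the fact that the harmonic replacement minimizes Dirichlet energy among functions with the same boundary data; taking instead $v_t = u \circ \Phi_t^{-1}$ would give equality, so the inequality in the statement reflects that the interior extension of $F$ is arbitrary.
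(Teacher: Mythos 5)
Your plan is a genuinely different route from the paper's. The paper follows Caffarelli--Jerison--Kenig essentially verbatim: it perturbs the \emph{function}, not the domain, taking $v_\e=(u-\e F)_+$ with $F$ harmonic for $\partial_i(\sqrt{g}g^{ij}F_j)$, using the boundary Harnack principle (on the NTA domain $\Gamma$) to control the perturbed positivity set $\{u>\e F\}$, and then extracting the second-order term from the expansion of $\operatorname{vol}(0<u<\e F)$ in the normal coordinates $x=z(s)-t\nu(s)$. The three ingredients there are exactly Remarks \ref{r:1}--\ref{r:3}: $u_{\nu\nu}=-H$ on $\partial\Gamma$, $H\ge 0$, and the volume-element expansion $dV=(1+tH+O(t^2))\,d\sigma\,dt$. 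Your shape-variation approach ($\Phi_t=\mathrm{id}+tF\nu$ plus harmonic replacement) is the classical alternative; it trades the boundary Harnack step and the level-set analysis for Hadamard-type first and second shape derivatives of the Dirichlet energy. Both roads lead to the same stability inequality, and your reduction from arbitrary $F$ to the harmonic extension at the end matches what the paper does.

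Two concrete cautions. First, the heart of the matter in your route --- the second-order expansion of $\int_{\Phi_t(\Gamma)}g^{ij}\partial_i v_t\partial_j v_t$ for the harmonic replacement $v_t$ --- is only asserted, not computed; this is precisely where the paper invests its effort (via Remarks \ref{r:1}--\ref{r:3}), and it cannot be waved through. Second, your bookkeeping convention is at odds with the statement you are proving: the paper's $H$, $\nu$, and $d\sigma$ are the \emph{flat} Euclidean quantities in the projected coordinates \eqref{e:projectmetric} (this is why $H$ is later computed via Gauss--Bonnet as the Euclidean geodesic curvature in Lemma \ref{l:gauss}); the cone metric enters only through $g^{ij}$ in the PDE and in the identity $u_{\nu\nu}=-H$, the constant factor $\sqrt{g}=\sqrt{1+c^2}$ having been divided out. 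Insisting on the intrinsic cone-metric mean curvature and conormal, as you propose, will produce an inequality with different (and non-proportional) boundary quantities, so you must either redo the computation in the projected coordinates or carefully convert at the end. Finally, your side remark that taking $v_t=u\circ\Phi_t^{-1}$ ``would give equality'' is wrong: that competitor is not harmonic in $\Phi_t(\Gamma)$, so it has strictly larger Dirichlet energy at second order and yields a strictly weaker inequality, not an identity.
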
 
 
 \begin{proof}
 To prove Lemma \ref{l:main} we intentionally follow the structure of the analogous Lemma in \cite{cjk04}, so that the reader may compare. 
   

 We define  $\Omega := \Gamma \cap B_R$ for some fixed $R$, and 
  use the local coordinates $g^{ij}$ as given in \eqref{e:projectmetric}. 
 We first assume that $F \in C_0^{\infty}(\R^3 \setminus \{0\})$ solves $\partial_i (\sqrt{g} g^{ij} F_j)=0$ in $\Gamma$. Since $\Gamma$ is an NTA domain it is also 
 a Twisted H\"older domain of order $1$ so by Theorem 4.5 in \cite{bb91} there is a boundary Harnack inequality for $\Delta_c$ on $\Gamma$. Then there exists a 
 constant $C$ such that $F \leq Cu$ in a neighborhood of the origin. 
 We define
  \[
   \Omega_{\epsilon} = \{x \in \Omega: u(x) > \e F(x)\}.
  \]
 Let $v_{\epsilon}=u-\epsilon F$ on $\overline{\Omega}_{\epsilon}$ and $v_{\epsilon}=0$ on $B_R\setminus \Omega_{\epsilon}$. We then 
 have that $v_{\epsilon}=u$ on $\partial B_R$. Integrating by parts we have 
  \[
   \int_{B_R \cap \{v_{\epsilon}>0\}} g^{ij} \partial_i v_{\e} \partial_{j} v_{\e} = \int_{(\partial B_R) \cap \Gamma} ug^{ij}(u-\e F)_i \nu_j.
  \]
 Since $\sqrt{g}=\sqrt{1+c^2}$ is a constant we may divide by $\sqrt{g}$ when minimizing the functional. We then have  
  \begin{equation}   \label{e:volume}
   \frac{1}{\sqrt{1+c^2}}\left(J(u,B_R) - J(v_{\e},B_R)\right) = \e \int_{(\partial B_R) \cap \Gamma} u g^{ij}F_i \nu_j + \text{vol}(0<u<\e F).
  \end{equation}
We note that in the above equation the volume element is from the flat metric since we have already divided out by $\sqrt{1+c^2}$. 
 Integrating by parts on $\Omega$ we have
 \[
  \begin{aligned}
  \int_{(\partial B_R) \cap \Gamma} u g^{ij}F_i \nu_j &= \int_{(\partial B_R) \cap \Gamma} (u g^{ij}F_i \nu_j - Fg^{ij}u_{i}\nu_{j} ) \\
    &= \int_{\partial (B_R \cap \Gamma)} (u g^{ij}F_i \nu_j - Fg^{ij}u_{i}\nu_{j} ) - \int_{(\partial \Gamma)\cap B_R} F\\
    &= - \int_{(\partial \Gamma) \cap B_R} F.
  \end{aligned}
 \]
 Then we have that 
  \[
   \frac{1}{\sqrt{1+c^2}}\left(J(u,B_R) - J(v_{\e},B_R)\right) = -\e \int_{(\partial \Gamma)\cap B_R} f d \sigma + \text{vol}(0<u<\e F). 
  \]
  
  In \cite{cjk04} three remarks are given. The first remark is 
  
  \begin{remark}  \label{r:1}
   $u_{\nu \nu}=-H$ on $\partial \Gamma$ except at the origin. 
   \end{remark}
   
   \begin{proof}
   Under a rotation we assume our point $P\in \partial\Gamma$ to be $(x_1,0,0)$. Locally near $P$ the free boundary is given by 
    $u(x_1,x_2,\phi(x_1,x_2))=0$. Differentiating with respect to $i,j$ (with $i,j=x_1,x_2$) we obtain 
    \[
     u_i + \phi_i u_{x_3} =0; \qquad u_{ij} + u_{i x_3} \phi_j + \phi_{ij} u_{x_3} + \phi_i u_{x_3 j} + \phi_i \phi_j u_{x_3 x_3} =0. 
     \]
     We now evaluate at $P$ and use that $\phi_i(P)=0$ and $u_{x_3}(P)=-1$ to conclude that 
     \[
      u_{ij}(P)=\phi_{ij}(P), \quad \text{ for } i,j=x_1,x_2.
     \]
     Recalling that $H$ is the mean curvature of $\partial \Gamma$ at $P$ we have that 
   \begin{equation}  \label{e:curve1}
    u_{x_1 x_1} + u_{x_2 x_2} =H.
   \end{equation}
   Now from the local coordinates given in \eqref{e:projectmetric}  for $i,j=x_1,x_2,x_3$ we have 
   \[
    (\partial_j g^{ij}) u_i + g^{ij}u_{ij}=0.
   \]
   Evaluating at $P$ and using that $u_{x_1}(P)=u_{x_2}(P)$ as well as $x_2=x_3$ at $P$ we obtain
   \[
    \frac{1}{1+c^2}u_{x_1 x_1}(P) + u_{x_2 x_2}(P) + u_{x_3 x_3}(P)=0. 
   \]
   We finally note that because $\partial \Gamma$ is a cone, that $(t,0,0) \in \partial \Gamma$ for all $t \geq 0$, so that $u_{x_1 x_1}(P)=0$. 
   Then combining the above equation with \eqref{e:curve1} we obtain that 
   \[
    H = - u_{x_3 x_3}(P).
   \]
  This concludes the proof of Remark \ref{r:1}. 
 \end{proof}
 
 \begin{remark}  \label{r:2}
  $H \geq 0$. In particular, $\R^3 \setminus \Gamma$ is a finite union of convex cones. 
 \end{remark}
 
 Whereas the proofs of Remarks \ref{r:1} and \ref{r:3} are very similar to those found in \cite{cjk04}, the proof of Remark  \ref{r:2} is different
 and requires Lemma  \ref{l:max} from the Appendix. 
 %

 \begin{proof}[Proof of Remark \ref{r:2}]
   We utilize the homogeneity of $u$. Since $u=rf$ and $\Delta_c u=0$ whenever $u>0$ it follows that $\Delta r^{\alpha}f =0$ as long as 
    \[
     \alpha = (-1 + \sqrt{1+8/(1+c^2)})/2. 
    \] 
    Notice that $0<\alpha<1$ for $c>0$. From Lemma \ref{l:max}, we have $|\nabla_{\theta} f|^2$ achieves the maximum on $S^2 \cap \partial \Gamma$. 
    Since $|\nabla_{\theta} f|=1$ everywhere on  $\partial \Gamma$, we conclude that $|\nabla_{\theta} f| \leq1$ in $\Gamma$. 
     Under a rotation we may assume that $(1,0,0)=P \in \partial \Gamma$ and $\phi$ is the outward 
   unit normal from the spherical coordinates given in \eqref{e:spherical}. 
   Then $\partial_{\phi} f_{\phi}^2 = 2f_{\phi} f_{\phi \phi}\geq 0$, and since $f_{\phi}(P)=-1$ we obtain that $f_{\phi \phi}(P)\leq 0$. 
   Then $u_{33}\leq 0$, and it follows that $H \geq 0$.  
   \end{proof}

 \begin{remark}   \label{r:3}
  Let $z: U \to \R^3$ defined on an open subset $U$ or $\R^2$ be a local parametrization of the surface $\partial \Gamma$ and let $\nu = \nu(s)$ be the
  unit normal to $\partial \Gamma$ at $z(s)$ pointing away from $\Gamma$. In the coordinate system $x(s,t)=z(s)-t\nu(s)$, the volume element 
  $dV= (1+tH + O(t^2))d\sigma(s)dt$, where $d\sigma(s)$ is the $n-1$ area element on the surface.  
 \end{remark}
 The volume element $dV$ in Remark \ref{r:3} is for the flat metric; therefore, Remark \ref{r:3} is identical to that in \cite{cjk04} and the same proof applies. 
 
 We now finish the proof of Lemma \ref{l:main}. Note that in the local coordinates \eqref{e:projectmetric}, if $P \in \partial \Gamma$, then $F F_{\nu}=g^{ij}F_i\nu_j$ at $P$ where $\nu$ is the outward 
 unit normal at $P$. This is most easily seen by under a rotation letting $P=(x_1,0,0)$. Now by combining Remarks \ref{r:1}, \ref{r:2}, and \ref{r:3} as in \cite{cjk04} we obtain
  \[
   \frac{1}{\sqrt{1+c^2}}\left(J(u,B) - J(v_{\e},B)\right)= \epsilon^2 \int_{\partial \Gamma \cap B} (F^2H - FF_{\nu}) d \sigma + O(\epsilon^3). 
  \]
 Since $u$ is a minimizer of $J$ we have that 
 \[
 \int_{\partial \Gamma \cap B} (F^2H - FF_{\nu}) d \sigma \leq 0,
 \]
 so that 
 \[
 \int_{\partial \Gamma \cap B} F^2 H d \sigma \leq \int_{\partial \Gamma \cap B}  FF_{\nu} d \sigma\ 
 = \int_{\partial \Gamma \cap B} g^{ij}F_i\nu_j d \sigma =\int_{\Gamma} g^{ij} F_iF_jd \sigma. 
 \]
 Since the above inequality is true for all $\Delta_c F=0$ it follows that 
 \[
 \int_{\partial \Gamma \cap B} F^2 H d \sigma \leq \int_{\Gamma} g^{ij} F_iF_jd \sigma,
 \]
  for any $F \in C_{0}^{\infty}(\R^3 \setminus \{0\})$. This concludes the proof of Lemma \ref{l:main}. 
  \end{proof}

  We recall the following Lemma from \cite{cjk04}. 
   \begin{lemma}  \label{l:gauss}
    Suppose that $\Gamma$ is an open cone in $\R^3$ and $\partial \gamma = (\partial \Gamma)\cap S^2$ is a finite untion of smooth curves. 
    Then the mean curvature $H$ of $\partial \Gamma$ can be written as 
     \[
        H=\frac{1}{r}\kappa(x/r), \quad r=|x|     
     \]
     where $\kappa$ is the geodesic curvature of the curve $\gamma$ in the unit sphere $S^2$. 
   \end{lemma}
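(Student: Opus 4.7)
The plan is to prove the identity by a direct parametric computation that exploits the cone structure. Let $\gamma : I \to S^{2}$ be an arc-length parametrization of a smooth component of $(\partial \Gamma) \cap S^{2}$, so that $|\gamma| = 1$ and, by differentiating $|\gamma|^{2} = 1$, also $\gamma \cdot \gamma' = 0$. The corresponding component of $\partial \Gamma$ is parametrized by
\[
x(r,s) = r\, \gamma(s), \qquad r > 0,\; s \in I,
\]
and I would work throughout in these adapted coordinates, viewing $\partial \Gamma$ as a smooth $2$-surface in $\R^{3}$ away from the vertex.

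Next I would compute the first fundamental form. The tangent vectors are $x_{r} = \gamma$ and $x_{s} = r\gamma'$, giving $g_{rr} = 1$, $g_{rs} = r(\gamma \cdot \gamma') = 0$, and $g_{ss} = r^{2}|\gamma'|^{2} = r^{2}$. Because $|\gamma| = |\gamma'| = 1$ and $\gamma \perp \gamma'$, the vector $\nu(s) := \gamma(s) \times \gamma'(s)$ is a unit vector orthogonal to both $x_{r}$ and $x_{s}$, hence a unit normal to $\partial \Gamma$; orient $\gamma$ so that $\nu$ points outward from $\Gamma$, consistent with the sign convention used in Remark \ref{r:2}.

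For the second fundamental form, differentiate once more: $x_{rr} = 0$, $x_{rs} = \gamma'$, $x_{ss} = r\gamma''$. Two of the three entries vanish for structural reasons: $h_{rr} = x_{rr} \cdot \nu = 0$ because rays through the origin are straight lines, and $h_{rs} = \gamma' \cdot (\gamma \times \gamma') = 0$ by orthogonality. Only $h_{ss} = r\, \gamma'' \cdot (\gamma \times \gamma')$ contributes, so the mean curvature (trace of $g^{-1}h$) is
\[
H \;=\; g^{rr}h_{rr} + g^{ss}h_{ss} \;=\; \frac{1}{r^{2}}\bigl(r\, \gamma'' \cdot (\gamma \times \gamma')\bigr) \;=\; \frac{1}{r}\, \gamma'' \cdot (\gamma \times \gamma').
\]

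The final step is to recognize $\gamma'' \cdot (\gamma \times \gamma')$ as the geodesic curvature of $\gamma$ in $S^{2}$. Since $\gamma$ is unit-speed on $S^{2}$ with ambient surface normal $\gamma$, the intrinsic unit normal to $\gamma$ in $S^{2}$ is exactly $\gamma \times \gamma'$, and the geodesic curvature is by definition $\kappa = \gamma'' \cdot (\gamma \times \gamma')$; equivalently one can decompose $\gamma'' = -\gamma + \kappa(\gamma \times \gamma')$ using $\gamma' \cdot \gamma'' = 0$ and $\gamma \cdot \gamma'' = -1$. Writing $x/r = \gamma(s)$ then yields $H(x) = \kappa(x/r)/r$, as claimed. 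The main bookkeeping point, and the only real subtlety, is to fix the orientation of $\gamma$ so that $\gamma \times \gamma'$ gives the outward normal to $\partial \Gamma$; once this is arranged the formula holds uniformly on each smooth component of $\partial \gamma$, and extends by smoothness to all of $\partial \Gamma \setminus \{0\}$.
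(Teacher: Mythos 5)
Your computation is correct and complete: the first and second fundamental forms of the cone $x(r,s)=r\gamma(s)$ are computed correctly, the radial principal curvature vanishes, and the identification $\kappa=\gamma''\cdot(\gamma\times\gamma')$ via the frame $\{\gamma,\gamma',\gamma\times\gamma'\}$ is the standard definition of geodesic curvature on $S^2$, with the trace (rather than averaged) convention for $H$ matching the one used throughout the paper. The paper itself gives no proof of this lemma --- it is recalled from \cite{cjk04} --- and your direct parametric argument, including the remark that the orientation of $\gamma$ must be chosen so that $\gamma\times\gamma'$ is the outward normal to $\Gamma$, is exactly the intended justification.
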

  
  Using Lemma \ref{l:gauss} we may prove 
   \begin{lemma}   \label{l:single}
     Let $\Gamma$ be a cone in $\R^3$ with the mean curvature of $H$ of $\partial \Gamma$ satisfying $H \geq 0$ as well as \eqref{e:variat}
     for every $F \in C_0^{\infty}(\R^3\setminus \{0\})$. Then $\Gamma^c$ has one connected component and is a convex cone contained in a half space. 
   \end{lemma}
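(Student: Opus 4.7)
The plan is to establish connectedness via a stability test and to dispatch the half-space assertion with the separation theorem. By Remark \ref{r:2}, $\Gamma^c$ is already a finite disjoint union of convex cones $K_1,\dots,K_m$; since $\Gamma$ is open and non-empty, each $K_i$ has non-empty interior, so $K_i\cap S^2$ is a spherically convex topological disk with smooth boundary $\gamma_i$. Set $\gamma=\bigsqcup_i\gamma_i=(\partial\Gamma)\cap S^2$, write $A_i$ for the spherical area of $K_i\cap S^2$, and $A=\sum_i A_i$. Once $m=1$ is ruled in, the fact that $\Gamma\neq\emptyset$ forces $K_1\subsetneq\R^3$, and a supporting hyperplane at the origin then places $K_1$ inside a closed half space. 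So the substantive task is to prove $m=1$.

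The first technical step is to test \eqref{e:variat} against the separated-variables ansatz
\[
F(x)=|x|^{-1/2}\,\psi(\log|x|)\,\phi(x/|x|),
\]
with $\psi\in C_0^\infty(\R)$ and $\phi\in C^\infty(S^2)$. The exponent $-1/2$ is the unique one that makes both sides translation-invariant in the variable $s=\log|x|$ up to terms involving $\psi'$. Using $g^{ij}F_iF_j=(1+c^2)^{-1}F_r^2+r^{-2}|\nabla_\theta F|^2$ from \eqref{e:projectmetric} together with $H=r^{-1}\kappa(x/r)$ from Lemma \ref{l:gauss}, a direct computation turns \eqref{e:variat} into
\[
\Bigl(\int_\R\psi^2\,ds\Bigr)\int_\gamma\kappa\phi^2\,d\ell\leq\Bigl(\int_\R(\psi'-\tfrac12\psi)^2\,ds\Bigr)\int_{\Gamma\cap S^2}\frac{\phi^2}{1+c^2}\,d\omega+\Bigl(\int_\R\psi^2\,ds\Bigr)\int_{\Gamma\cap S^2}|\nabla_\theta\phi|^2\,d\omega.
\]
Inserting a sequence of bump functions $\psi$ supported in $[-N,N]$ and equal to $1$ on $[-N+1,N-1]$, one has $\int\psi^2\sim 2N$ while $\int(\psi')^2=O(1)$, so dividing by $2N$ and sending $N\to\infty$ makes the cross terms vanish and leaves the scale-invariant spherical inequality
\[
\int_\gamma\kappa\phi^2\,d\ell\leq\frac{1}{4(1+c^2)}\int_{\Gamma\cap S^2}\phi^2\,d\omega+\int_{\Gamma\cap S^2}|\nabla_\theta\phi|^2\,d\omega,
\]
valid for every $\phi\in C^\infty(S^2)$.

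I would then specialize to $\phi\equiv 1$. Spherical Gauss-Bonnet applied to each disk $K_i\cap S^2$ (Gauss curvature $1$, Euler characteristic $1$) gives $\int_{\gamma_i}\kappa\,d\ell=2\pi-A_i$, so the inequality collapses to $2\pi m-A\leq(4\pi-A)/(4(1+c^2))$, which rearranges to
\[
A\geq\frac{4\pi\bigl(2m(1+c^2)-1\bigr)}{3+4c^2}.
\]
For $m\geq 2$ the right-hand side is already $\geq 4\pi$, which would force $A=4\pi$ and hence the area of $\Gamma\cap S^2$ to vanish; this contradicts the fact that $\Gamma$ is an open non-empty cone. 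Thus $m=1$, and the half-space claim follows as explained. The main obstacle I anticipate is the book-keeping in the reduction: one must track how the factor $(1+c^2)^{-1}$ in the radial part of $g^{ij}$ combines with the powers of $r$ in $dV$, $d\sigma$, and $H$ to confirm that $\alpha=-1/2$ is indeed the critical exponent and that the $\psi'$-remainder vanishes in the limit. Once the spectral inequality is in hand the Gauss-Bonnet closing argument is immediate.
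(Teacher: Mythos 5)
Your proposal is correct and follows essentially the same route as the paper: test the stability inequality \eqref{e:variat} with the (near-optimal) radial profile $r^{-1/2}$ times a logarithmic cutoff to extract $\int_\gamma \kappa \leq \tfrac{1}{4(1+c^2)}|U|$, then close with Gauss--Bonnet on the components of $\Gamma^c\cap S^2$ to force $m=1$. Your intermediate spherical inequality with a general $\phi\in C^\infty(S^2)$ is a mild generalization that you immediately specialize to $\phi\equiv 1$, and your algebraic rearrangement $A\geq 4\pi(2m(1+c^2)-1)/(3+4c^2)$ is equivalent to the paper's $(1-\tfrac{1}{4(1+c^2)})|U|\leq 4\pi-2m\pi$; the supporting-hyperplane remark for the half-space conclusion fills in a step the paper only asserts.
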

  
  \begin{proof}
   We choose $F$ to be a radial function. We let $\gamma:= \partial \Gamma \cap S^2$ and $U := S^2 \cap \Gamma$. Then 
    \[
     \int_{\gamma}\int_0^{\infty} \kappa(\alpha(s))F^2(r) \ dr \ ds \leq \int_{\Gamma} g^{ij} F_i F_j,
    \]
    where $\alpha(s)$ is a parametrization of $\gamma$ with respect to arc length. 
    We now convert the integral over $\Gamma$ from the local coordinates given by \eqref{e:projectmetric} to the spherical coordinates given by \eqref{e:spherical}. 
    \[
     \begin{aligned}
     \int_{\Gamma} g^{ij} F_i F_j &= \frac{1}{\sqrt{1+c^2}}\int_{\Gamma} \sqrt{g} g^{ij} F_i F_j  \\
                                                  &=\frac{1}{\sqrt{1+c^2}}\int_{\C} |\nabla_c F|^2 \\
                                                  &=  \frac{1}{\sqrt{1+c^2}}\int_0^{\infty}\int_{U}r^2 \sin \phi \sqrt{1+c^2}(1+c^2)^{-1}F_r F \\
                                                  &=\frac{|U|}{1+c^2}\int_0^{\infty}r^2 F_r F ,
      \end{aligned}  
     \]
     and $|U|$ is the area of $U$ in the unit sphere. Using the same choice of radial function $F(|r|)$ as in \cite{cjk04}, and combining the above two inequalities
     we obtain 
      \[
       \int_{\gamma} \kappa \ ds \leq \frac{1}{4(1+c^2)} |U|. 
      \]
      We now label $V_j, \ j=1,\ldots,m$ as the connected components of $S^2\setminus U$ and $\gamma_j$ as the boundary curves.
       Using that $H\geq 0$, so that $\kappa \geq0$, we apply the
      Gauss-Bonnet formula 
       \[
        |V_j| + \int_{\gamma_j} \kappa \ ds = 2\pi,
       \] 
       and sum over $j$ to obtain
       \[
       2m\pi - \sum_{j=1}^m |V_j| = \sum_{j=1}^m \int_{\gamma_j} \kappa \ ds = \int_{\gamma} \kappa \ ds \leq \frac{1}{4(1+c^2)} |U|.
       \]
       Since $\displaystyle \sum_{j=1}^m |V_j| = 4 \pi -  |U| $, we have that
       \[
        0< \left(1-\frac{1}{4(1+c^2)} \right) |U| \leq 4\pi - 2m\pi.
       \]
       Then $m=1$, and $U^c$ is a single connected convex component of $S^2$, and so $U^c$ must be contained in a half space. 
  \end{proof}
  
  In this Section we have closely followed the ideas in \cite{cjk04}. Moving forward, however, the ideas in this paper are very different. When $c=0$ one may 
  use a homogeneity argument to conclude that $\partial \Gamma$ is flat. When $c>0$ we will see in the next Section that there exists 
  a stable solution $\Phi_c$ 
  where $\partial \Gamma$ is not flat. Moreover, in Section \ref{s:min} we show that for $c>0$ and small enough that these candidate solutions are indeed minimizers.

  \section{The Symmetric  Solution}    \label{s:symmetric}
    In this Section we present a homogeneous solution $\Phi_c$ which will turn out to be stable and even a minimizer for certain values of $c$. We
    also show that up to rotation $\Phi_c$ is the only possible $1$-homogeneous stable solution. 
    From Lemma \ref{l:gauss} if $u$ is a $1$-homogeneous solution that is stable, then $\{u=0\}$ consists of a single connected component that is convex and 
    contained in a half space. For each $c>0$ we now describe a symmetric candidate solution. Using the spherical coordinates 
    \eqref{e:spherical}, if $\Delta_c r^{\beta} f=0$ and $f(\theta,\phi)$ is a function of $\phi$ alone and independent of $\theta$, then 
     \begin{equation} \label{e:beta}
      \frac{\beta(\beta+1)}{1+c^2} f(\phi) + \frac{\cos \phi}{\sin \phi} f'(\phi) + f''(\phi)=0. 
     \end{equation}
     Under the change of variables $t=\cos \phi$, the function $f$ is a Legendre function and well understood. 
     For $c\geq0$ and fixed $\beta=1$ there is a unique solution 
     with $f'(\phi)=0$ and $f'(\phi_0)=1$ where $f(\phi_0)=0$ which we will denote by $f_{1,c}$. We note that  $f_{1,c}(\phi)\geq0$ for $\phi\leq \pi/2$. 
     Then $rf_{1,c}(\phi)$ is a symmetric (in the variable $\theta$) candidate solution 
     which we will denote by $\Phi_c(r,\phi)=rf_{1,c}(\phi)$. We now show that if $u$ is a stable $1$-homogeneous solution, then up to rotation $u\equiv \Phi_c$.

 \begin{lemma}   \label{l:alexandrov}
   Let $u$ be $1$-homogeneous solution to \eqref{e:phase1} and assume that $\{u=0\}$ is a single connected component contained in a half space. Then up to rotation 
   $u \equiv \Phi_c$. 
 \end{lemma}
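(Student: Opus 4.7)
The plan is to reduce the statement to a Serrin-type overdetermined problem on the cross-section $2$-sphere and then apply a version of Alexandrov's moving planes method. Using the spherical coordinates \eqref{e:spherical} and the homogeneity $u = r f(\phi,\theta)$, a direct computation from \eqref{e:beltrami} shows that $f$ satisfies
\begin{equation*}
 \Delta_\theta f + \frac{2}{1+c^2} f = 0 \quad \text{in } V := \{f > 0\} \subset S^2, \qquad f = 0 \text{ and } |\nabla_\theta f| = 1 \text{ on } \partial V.
\end{equation*}
The hypothesis that $\{u=0\}$ is a single connected component contained in a half-space in the parametrization \eqref{e:projectmetric} translates into $V^c \subset S^2$ being geodesically convex and contained in a hemisphere of the round unit sphere (a ray $tp \in \Gamma^c$ from the origin corresponds to the point $p \in V^c$, so Euclidean convexity of the cone becomes geodesic convexity of the cross-section, and a containing half-space becomes a containing hemisphere). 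The $1$-homogeneity has thus converted the free boundary problem on $\C$ into a Serrin-type overdetermined eigenvalue problem on $S^2$ with a convexity constraint on the zero set.

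Next I would run Alexandrov's moving planes argument using reflections across great circles, which are the natural isometries of the cross-section sphere and lift to isometries of $\C$. Fix a hemisphere bounded by a great circle $C_0$ with $V^c$ on one side of $C_0$, pick an axis in $C_0$, and rotate $C_0$ about that axis to obtain a family $C_\alpha$ sweeping toward $V^c$. Let $\alpha^*$ be the first angle at which either $C_{\alpha^*}$ becomes internally tangent to $\partial V^c$, or at which the reflected copy of $\partial V^c$ across $C_{\alpha^*}$ meets $\partial V^c$ at a point where $C_{\alpha^*}$ is orthogonal to $\partial V$. On the hemisphere opposite $V^c$ relative to $C_{\alpha^*}$, which is entirely contained in $V$, both $f$ and its reflection $f \circ R_{\alpha^*}$ are nonnegative solutions of the same linear equation and agree on $C_{\alpha^*}$; the strong maximum principle then gives a strict inequality unless they coincide. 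Coincidence is forced at $\alpha^*$ either by Hopf's lemma at an internal tangency, or by Serrin's corner lemma at the orthogonal boundary contact, and it is in the second case that the overdetermined condition $|\nabla_\theta f|=1$ is essential. This yields a reflection symmetry of $f$ across $C_{\alpha^*}$. Repeating the procedure with several choices of rotation axis produces reflection symmetries across enough great circles to generate a one-parameter rotation subgroup of $O(3)$, and hence full axial symmetry of $f$ about some axis $e \in S^2$.

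Once axial symmetry is established, $f$ depends only on the polar angle $\phi$ measured from $e$, and the PDE reduces to the ODE \eqref{e:beta} with $\beta = 1$. Regularity of $f$ at $\phi = 0$ (the pole lying inside $V$, forced by $V^c$ being contained in a hemisphere) fixes one initial condition up to a scalar multiple, and the overdetermined condition $|f'(\phi_0)| = 1$ at $\phi_0 = \inf\{\phi : f(\phi) = 0\}$ pins down the scalar, so $f \equiv f_{1,c}$ and $u \equiv \Phi_c$ after the rotation used in the previous step. The main obstacle is the faithful execution of the moving planes argument on $S^2$: one must verify that the reflections employed are genuine isometries of the cross-section metric, which forces the use of great-circle reflections rather than parallel hyperplane reflections, and one must dispose of both the internal tangency and the orthogonal boundary contact alternatives at the critical angle, the latter relying on Serrin's boundary point lemma at corners together with the overdetermined Neumann data.
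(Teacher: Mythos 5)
Your overall strategy coincides with the paper's: use $1$-homogeneity to reduce to the overdetermined eigenvalue problem $\Delta_{\theta}f+\tfrac{2}{1+c^2}f=0$ in $V=\{f>0\}\subset S^2$ with $f=0$, $|\nabla_{\theta}f|=1$ on $\partial V$, run a moving-plane argument with great-circle reflections (the cross-section of $\C$ at $r=1$ is indeed the round sphere, so these are isometries and lift to isometries of $\C$), conclude the zero set is a spherical cap with an axially symmetric $f$, and identify $f=f_{1,c}$ through the ODE \eqref{e:beta} with regularity at the pole and the normalization $|f'(\phi_0)|=1$; the paper does exactly this, also citing \cite{s71}. (Geodesic convexity of the zero set is not part of the hypothesis, only a single component in a half space, but neither argument needs it.)

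The gap is in your comparison step. The equation has a positive zeroth-order coefficient, so ``the strong maximum principle'' is not available for free: an operator $\Delta_{\theta}+\lambda$ with $\lambda>0$ obeys a maximum principle on a domain $D\subset S^2$ only if $\lambda<\lambda_1(D)$. Moreover the difference $w=f-f\circ R_{\alpha^*}$ is not a solution on your comparison hemisphere $H$; since $f\circ R_{\alpha^*}$ vanishes on the reflected copy of the zero set inside $H$ and has a gradient jump there, it is only a distributional subsolution, so $w$ is a supersolution vanishing on $\partial H$, and the assertion $w\ge 0$ in $H$ is exactly what must be proved. For $c>0$ this can be repaired: $H$ is an open hemisphere, whose first Dirichlet eigenvalue is $2>\tfrac{2}{1+c^2}$, so $-\Delta_{\theta}-\tfrac{2}{1+c^2}$ has positive principal eigenvalue on $H$ and the generalized maximum principle gives $w\ge0$; after that, Hopf's lemma and Serrin's corner lemma do apply despite the sign of the zeroth-order term (the term $\lambda w\ge0$ can be absorbed into the right-hand side). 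At $c=0$ the hemisphere is exactly critical ($\lambda=\lambda_1=2$) and this route fails as stated. This is precisely where the paper argues differently: instead of a maximum principle it uses principal-eigenvalue domain monotonicity --- where the reflected difference is positive it is a nonnegative Dirichlet eigenfunction of its positivity set with eigenvalue $\tfrac{2}{1+c^2}$, that set is strictly smaller than the positivity set of the reflected function, which carries a positive eigenfunction with the same eigenvalue, and strict monotonicity of the principal eigenvalue forces the difference region to be empty. That device is uniform in $c\ge0$ and avoids the sub/supersolution bookkeeping; with either that argument or your hemisphere eigenvalue bound inserted, the remainder of your sketch (critical-position alternatives, axial symmetry, ODE identification) is sound.
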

 
 \begin{proof}
  We will use the moving plane method as presented in \cite{s71}. By rotation we will assume that $\{u=0\}$ is contained in the hemisphere 
  given by $0\leq \phi \leq \pi/2$. Let $u=rg(\theta,\phi)$. We claim that for $0\leq \phi \leq \pi/2$ we have $g(\theta,\phi)\leq g(\theta,\pi-\phi)$. 
  That is when $g$ is reflected across the equator 
  the bottom half is always greater than or equal to the upper half. Let $h(\theta,\phi)=g(\theta,\pi-\phi)$ and consider $\{g>h\} \cap \{0 \leq \phi \leq \pi/2\}$. 
  Notice that $(g-h)(\theta,\phi/2)=0$ and is a nonnegative eigenfunction $\Delta_{\theta} (g-h)=-\lambda (g-h)$ on $\{g>h\}\cap \{0 \leq \phi\leq \pi/2\}$. Now
  $|\{g-h\leq 0\}\cap \{0\leq \phi\leq \pi/2\}|>0$, so that 
  \[
     \{g-h>0\}\cap \{\phi\leq \pi/2\} \subset \{h>0\}\quad \text{ and } \quad  |\{g-h>0\}\cap \{\phi\leq \pi/2\}|<|\{h>0\}|. 
    \]
     Both $g-h$ and $h$ are both positive 
  eigenfunctions on their respective positivity sets with the same eigenvalue. If $\{g-h>0\}\cap \{\phi\leq \pi/2\}\neq \emptyset$, then the eigenvalue for $g-h$ would be strictly larger
  than the eigenvalue for $h$ which would be a contradiction. Therefore, $g\leq h$ whenever $\phi\leq \pi/2$.

  With this comparison principle in place one can begin to rotate the equator into the set $\{g=0\}$. We have the same comparison argument as before, so that the reflection
  will always lie below. Then conditions $(A),(B),(C),(D)$ of Section 3  in 
  \cite{s71} are all met, so that we conclude $\{g=0\}$ is a spherical cap.  Now the nonnegative eigenfunction on a domain is unique, and after rotation there is a nonnegative
  eigenfunction given by \eqref{e:beta}. Since $u=rg$ and $\Delta_c u=0$ it follows that $\beta=1$ so that $g=f_{1,c}$, so that $u=\Phi_{c}$.
  
  \end{proof}

\section{Stable Solutions}  \label{s:stable}  
 In this Section we prove Theorem \ref{t:main1}. We first show that for $c$ large enough the solution $\Phi_c$ is not stable. 
 
 \begin{lemma}  \label{l:big}
  There exists $M <\infty$ such that if $c \geq M$, then $\Phi_{c}$ is not a stable solution.  
  \end{lemma}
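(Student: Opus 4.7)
The plan is to exhibit a test function $F \in \mathcal{F}$ that violates the second variation inequality \eqref{e:variat} of Lemma \ref{l:main} once $c$ is large enough. Since $\Phi_c$ is rotationally symmetric, $\partial \Gamma$ is the cone over the latitude circle $\{\phi = \phi_0(c)\}$ on $S^2$, where $\phi_0(c)$ is the first zero of the angular profile $f_{1,c}$. This makes radial test functions $F = F(r)$ especially convenient, because then both sides of \eqref{e:variat} collapse to explicit one-dimensional integrals on $(0,\infty)$ and the inequality reduces to a Hardy-type comparison.

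For such $F$, a direct computation using spherical coordinates on $\R^3$, the identity $|\nabla_c F|^2 = F_r^2/(1+c^2)$, and the formula $H = |\cot\phi_0|/r$ supplied by Lemma \ref{l:gauss}, gives
\[
\int_{\Gamma} g^{ij} F_i F_j \, dx = \frac{2\pi(1-\cos\phi_0)}{1+c^2}\int_0^\infty r^2 F_r^2 \, dr, \qquad \int_{\partial \Gamma} H F^2 \, d\sigma = 2\pi |\cos\phi_0|\int_0^\infty F^2 \, dr.
\]
The sharp one-dimensional Hardy inequality $\int_0^\infty r^2 F_r^2 \, dr \geq \tfrac{1}{4}\int_0^\infty F^2 \, dr$, whose constant $1/4$ is approached but not attained by $F \sim r^{-1/2}$ with a logarithmic cutoff on $(R^{-1},R)$, therefore shows that \eqref{e:variat} can hold for all radial $F$ only if $(1+c^2)|\cos\phi_0(c)| \leq (1-\cos\phi_0(c))/4$; when $\cos\phi_0 < 0$ this is equivalent to $|\cos\phi_0(c)|\,(4c^2+3)\leq 1$.

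It remains to show that this necessary condition is violated for all $c$ large enough; in fact $\phi_0(c) \to \pi$, so $|\cos\phi_0(c)| \to 1$. The profile $f_{1,c}$ satisfies the ODE $f'' + \cot\phi \, f' + \frac{2}{1+c^2} f = 0$ with $f'(0)=0$, and as $c \to \infty$ the coefficient $2/(1+c^2) \to 0$, so $f_{1,c}$ converges locally uniformly on $[0,\pi)$ to the constant solution of the limit equation, pushing the first zero to $\pi$. Equivalently, writing $f_{1,c}(\phi) = P_\alpha(\cos\phi)$ with $\alpha(\alpha+1) = 2/(1+c^2)$, a first-order expansion about $\alpha = 0$ yields $P_\alpha(t) = 1 + \alpha \log\frac{1+t}{2} + O(\alpha^2)$, whose root lies at $t_0(c) \approx -1 + 2e^{-1/\alpha(c)} \to -1$.

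Combining the two ingredients, once $c$ is large enough the inequality $|\cos\phi_0(c)|(4c^2+3)>1$ holds, and taking $F = F_R$ in the near-optimal Hardy family with $R$ sufficiently large produces a violation of \eqref{e:variat}, so $\Phi_c$ is not stable. The main obstacle is the ODE step: one must make quantitative the statement that the first zero of $f_{1,c}$ is driven out to $\pi$ as $c \to \infty$. A soft barrier or continuous-dependence-on-parameters argument for the linear ODE should suffice, but this is the point where the $c$-dependence of the candidate solution really enters the proof.
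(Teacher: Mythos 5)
Your proposal is correct and follows essentially the same route as the paper: test the second variation inequality \eqref{e:variat} of Lemma \ref{l:main} with radial $F$, so that the boundary term is comparable to $|\cos\phi_0|\int F^2$ while the Dirichlet term carries the factor $(1+c^2)^{-1}$ and is therefore negligible for large $c$. The only substantive differences are your use of the sharp Hardy constant (the paper simply fixes one compactly supported radial $F$ and lets $c\to\infty$) and your justification that $\phi_0(c)\to\pi$ via the Legendre expansion --- a fact the paper asserts without proof --- so your write-up is, if anything, more complete.
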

 
 \begin{proof}
 Let $\phi_0$ be such that $f_{1,c}(\phi_0)=0$. 
  We also label $t_0 = \cos \phi_0$. We have $t_0 \to -1$ as $c \to \infty$. Furthermore, the mean curvature $H$ of $\partial \Gamma$ is $H_1/r$ where 
 $r$ is the distance from the origin, and $H_1$ is the mean curvature at radius $1$ which is  
  \[
   H_1 = \frac{|t_0|}{\sqrt{1-t_0^2}}. 
  \]
 We choose a radial function $F(r)$ which is smooth and compactly supported in $B_1 \setminus \{0\}$. 
 Then 
  \[
   \begin{aligned}
  \int_{\partial \Gamma} H F^2 \ d \sigma &= \int_{0}^{|t_0|} H 2 \pi \frac{1+t_0^2}{|t_0|}r F^2(t/t_0)\sqrt{1 + (1-t_0^2)/t_0^2} \ dr  \\
                                                                &= \int_{0}^{|t_0|}  2 \pi  F^2(t/t_0)|t_0|^{-1}\ dr \\ 
                                                                &= \int_0^1 F^2(r) \ dr. 
   \end{aligned}
  \]
  Thus, the above quantity remains constant independent of $c$. 
  Furthermore, by converting the local coordinates from \eqref{e:projectmetric} to the spherical coordinates \eqref{e:spherical} we have 
  \[
   \begin{aligned}
   \int_{\Gamma} g^{ij} F_i F_j &= \frac{1}{\sqrt{1+c^2}}\int_{\C} |\nabla_c F|^2 \\
                                             &= \frac{|S^2 \cap \{f_{1,c}>0\}|}{1+c^2} \int_{0}^{1} r^2 [F'(r)]^2 \ dr \\
                                             &=  \frac{2 \pi [2/3 - |t_0| + |t_0|^3/3]}{1+c^2} \int_0^1 r^2 [f'(r)]^2 \ dr. 
   \end{aligned}
  \]
As $c \to \infty$, the above quantity goes to zero. Therefore, for large enough $c$ the second variational inequality \eqref{e:variat} fails, 
 and we conclude that our candidate solution $\Phi_c$ is not stable. 
\end{proof}

  For this next Lemma we define $G_{\beta}(r,\phi):=r^{\beta}g_c(\phi)$ where $g_c(\phi)$
 is a solution to \eqref{e:beta} with $-1<\beta<0$ and the conditions 
   $g_c'(0)=0$ and $g_c(\phi) \geq 0$ for all $0\leq \phi < \pi$. 
   Now $G_{\beta}' \geq 0$ as well. We will see in the
   proof that it is convenient to choose $\beta=-1/2$ in which case we will simply write $G$. Also, when the value of $c$ is fixed we will write simply $g(\phi)$. The
   function $g(\phi)$ should not be confused with the local coordinates $g^{ij}$ from the metric defined by \eqref{e:projectmetric}. 
 \begin{lemma}   \label{l:stable}
  Let $\Phi_c$ be the symmetric solution as described in Section \ref{s:symmetric}. Let $\Gamma=\{\Phi>0\}$, and $\phi_0$ 
  such that $\Phi_c(\phi_0)=0$. Then \eqref{e:variat} is equivalent to 
   \begin{equation}   \label{e:equiv}
    H_1 \leq \frac{g'(\phi_0)}{g(\phi_0)}
   \end{equation}
   where $H_1$ is the mean curvature of $\Gamma$ at $r=1$, and $g(\phi)$ is a nonnegative solution to \eqref{e:beta} with
   \[
    \beta=-1/2, \quad g'(0)=0, \quad g(\phi) >0, \quad g'(\phi)\geq 0. 
   \]
 \end{lemma}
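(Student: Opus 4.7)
The plan is to recognize \eqref{e:variat} as a separable Rayleigh-quotient problem and identify its extremizer. Since $\Phi_c$ is rotationally symmetric in $\theta$, I first Fourier-decompose $F \in \mathcal{F}$ as $F(r,\phi,\theta) = \sum_{k} F_k(r,\phi) e^{ik\theta}$. Both sides of \eqref{e:variat} diagonalize in $k$: the boundary integral on $\partial\Gamma = \{\phi=\phi_0\}$ splits by orthogonality of the $\theta$-modes, while the Dirichlet integral acquires the extra nonnegative term $k^2 |F_k|^2/(r^2 \sin^2\phi)$ whenever $k \neq 0$. Hence the stability condition reduces to its axisymmetric version, and I further restrict to separable test functions $F(r,\phi) = \eta(r) g(\phi)$ with $g'(0) = 0$.

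A direct computation in spherical coordinates \eqref{e:spherical}, using $H = H_1/r$ on $\{\phi = \phi_0\}$, converts \eqref{e:variat} into
\begin{equation*}
H_1 g(\phi_0)^2 \sin\phi_0 \int_0^\infty \eta^2\, dr \;\leq\; \frac{1}{1+c^2}\int_0^\infty (\eta')^2 r^2\, dr \cdot \int_0^{\phi_0} g^2 \sin\phi\, d\phi + \int_0^\infty \eta^2\, dr \cdot \int_0^{\phi_0} (g')^2 \sin\phi\, d\phi.
\end{equation*}
The sharp Hardy inequality $\int_0^\infty (\eta')^2 r^2\, dr \geq \tfrac14 \int_0^\infty \eta^2\, dr$, saturated in the limit by $\eta(r) \approx r^{-1/2}$, selects the homogeneity exponent $\beta = -1/2$ and leaves the purely angular inequality
\begin{equation*}
H_1 g(\phi_0)^2 \sin\phi_0 \;\leq\; \int_0^{\phi_0} \Bigl[(g')^2 + \frac{g^2}{4(1+c^2)}\Bigr] \sin\phi\, d\phi.
\end{equation*}
Its Euler--Lagrange equation is $g'' + \cot\phi\, g' = g/(4(1+c^2))$, which coincides with \eqref{e:beta} at $\beta = -1/2$ since $\beta(\beta+1) = -1/4$. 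For the positive solution $g$ described in the statement, integrating by parts against this ODE collapses the right-hand side to $g(\phi_0) g'(\phi_0) \sin\phi_0$; dividing by $g(\phi_0)^2 \sin\phi_0 > 0$ yields \eqref{e:equiv}.

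The main technical hurdle is that the Hardy-optimal profile $r^{-1/2}$ is not admissible in $\mathcal{F}$, so both implications must be realized via cutoffs. For \eqref{e:variat} $\Rightarrow$ \eqref{e:equiv}, I would test against $F_R(r,\phi) = \eta_R(r) g(\phi)$ with $\eta_R \in C_c^\infty((R^{-1},R))$ approximating the Hardy ratio up to $O(1/\log R)$, and let $R \to \infty$. For the reverse direction, I would invoke Picone's identity with the positive $\Delta_c$-harmonic comparison function $V(r,\phi) = r^{-1/2} g(\phi)$ on $\Gamma$: for any $F \in \mathcal{F}$,
\begin{equation*}
\int_\Gamma g^{ij} F_i F_j\, dx \;\geq\; \int_{\partial\Gamma} \frac{F^2}{V}\, \partial_\nu V\, d\sigma,
\end{equation*}
and on $\partial\Gamma$ one has $\partial_\nu V / V = g'(\phi_0)/(r\, g(\phi_0))$, which dominates $H = H_1/r$ exactly when \eqref{e:equiv} holds, thereby giving \eqref{e:variat}.
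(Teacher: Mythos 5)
Your proof is correct, and on the necessity direction (\eqref{e:variat} $\Rightarrow$ \eqref{e:equiv}) it coincides with the paper's: your near-optimal Hardy profile $\eta_R(r)\approx r^{-1/2}$ with cutoffs is exactly the paper's test function built from $\Psi_1$, $G=r^{-1/2}g(\phi)$, and $\Psi_2$, and the Hardy constant $1/4$ is the same reason the paper singles out $\beta=-1/2$ (where $\beta(\beta+1)$ is minimized); your integration by parts of the angular energy against \eqref{e:beta} reproduces the paper's computation of $\int_{\partial\Gamma}g^{ij}GG_i\nu_j$. Where you genuinely differ is the sufficiency direction. The paper sets up the Steklov eigenvalue problem for the quotient $E(F,\Gamma)$, takes its minimizer $v$, slides a multiple $MG_\beta$ of the positive $\Delta_c$-harmonic function $r^{\beta}g(\phi)$ down until it touches $v$ on $\partial\Gamma$, and compares conormal derivatives at the touching point to conclude $g'(\phi_0)/g(\phi_0)\le \lambda H_1$. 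You instead run Picone's identity with the same comparison function $V=r^{-1/2}g(\phi)$, obtaining $\int_\Gamma g^{ij}F_iF_j\ge\int_{\partial\Gamma}(F^2/V)\,g^{ij}V_i\nu_j\,d\sigma$ for every admissible $F$ directly; since on $\{\phi=\phi_0\}$ the normal is tangent to the spheres (so the conormal of $V$ equals $r^{-1}g'(\phi_0)V/g(\phi_0)$) and $H=H_1/r$, \eqref{e:equiv} yields \eqref{e:variat} pointwise on the boundary. This is the integrated version of the paper's pointwise barrier argument: it buys you a proof that bypasses existence of the Steklov minimizer and the maximum-principle touching analysis, at the cost of nothing. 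Two small remarks: the opening Fourier decomposition in $\theta$ is superfluous (necessity needs only one axisymmetric test function, and your Picone step already handles arbitrary $F$), and in the Picone inequality $\partial_\nu V$ should be read as the conormal $g^{ij}V_i\nu_j$ — which, as noted, agrees with the Euclidean normal derivative on the radial boundary $\partial\Gamma$, so your evaluation is right.
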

 
 \begin{proof}
  We first show that \eqref{e:equiv} implies \eqref{e:variat}. Let $F \in \mathcal{F}$ with  $F=0$ on $\partial B_{R_2} \cap \Gamma$ and $\partial B_{R_1} \cap \Gamma$. We aim to minimize the quantity 
   \[
     E(F,\Gamma):= \frac{\int_{\Gamma} g^{ij} F_i F_j}{  \int_{\partial \Gamma} H F^2 d \sigma} 
   \]
  From the standard theory of Calculus of variations, a unique minimizer $v$ will exist and satisfy the Steklov eigenvalue problem  
   \[
    \begin{cases}
      \partial_j (g^{ij}v_i) =0 &\text{ in } \Gamma \cap (B_{R_2}\setminus B_{R_1}) \\    
      g^{ij} v_i \nu_j = \lambda H v &\text{ on }  \partial \Gamma \cap  (B_{R_2}\setminus B_{R_1}),
    \end{cases}
   \]
   where $\lambda = E(v,\Gamma)$. In order to obtain a lower bound for $\lambda$, we consider the functions 
   $G_{\beta}$.
   Since $G_{\beta} >0$ on 
   $\partial \Gamma \cap  (B_{R_2}\setminus B_{R_1})$, there exists some constant $M$ and a point 
   $x_0 \in \overline{\Gamma \cap (B_{R_2}\setminus B_{R_1})}$ such that $M G_{\beta} \geq v$ and $M G_{\beta}(x_0)= v(x_0)$. 
   Since $\Delta_c G_{\beta}=0$ in $\Gamma$, then from the maximum principle we conclude $x_0 \in \partial \Gamma \cap (B_{R_2}\setminus B_{R_1})$. 
   Then if $x_0=(r_0,\theta_0,\phi_0)$ and $H_1$ is the mean curvature of $\Gamma$ at $x_0/|r|$ we obtain 
    \[
       \begin{aligned}
       r^{\beta}\frac{Mg'(\phi_0)}{r} &= g^{ij} \partial_i M G_{\beta}  \nu_j(x_0) \\
                                   &\leq g^{ij}(x_0) v_i \nu_j \\
                                   &= \lambda H v(x_0) \\
                                   &= r^{\beta}\lambda H_1 \frac{Mg(x_0)}{r}.
        \end{aligned}
    \]
    Thus
    \[
     \frac{g'(\phi_0)}{g(\phi_0)} \leq \lambda H_1 = E(v,\Gamma) H_1. 
    \]
    Then if 
     \[
      1 \leq \frac{1}{H_1} \frac{g'(\phi_0)}{g(\phi_0)}, 
     \]
    then also $E(v,\Gamma)\geq 1$, and so \eqref{e:variat} holds.

    We now show that \eqref{e:variat} implies \eqref{e:equiv}. 
    From the symmetry of $\beta$ for $-1<\beta<0$ in \eqref{e:beta}, one may expect a maximum bound from below when $\beta=-1/2$. We now show this is indeed the case.
    Recall that  $G(r,\phi)=r^{-1/2}g(\phi)$ where $g(\phi)$ is a solution to \eqref{e:beta} with $\beta=-1/2$.     
    Then in local coordinates \eqref{e:projectmetric} we have 
    \[
     \begin{aligned}
     \int_{\Gamma \cap (B_{R_2}\setminus B_{R_1})} g^{ij} G_iG_j 
       &= \int_{\partial \Gamma} g^{ij}G_i \nu_{j} 
       + \int_{\partial B_{R_2}\cap \Gamma} g^{ij} G_i \nu_j +  \int_{\partial B_{R_1}\cap \Gamma} g^{ij} G_i \nu_j \\
       &= \int_{\partial \Gamma} g^{ij}G_i \nu_{j},       
     \end{aligned}
    \]
    since 
    \[
     \int_{\partial B_{R_2}\cap \Gamma} g^{ij} GG_i \nu_j +  \int_{\partial B_{R_1}\cap \Gamma} g^{ij} G_i \nu_j=0,
    \]
    because $G(r,\phi)=r^{-1/2}g(\phi)$. This is also why we chose $\beta=-1/2$. Now
    \[
     \int_{\partial \Gamma} g^{ij}GG_i \nu_{j} = \int_{\partial \Gamma} r^{-1/2}g(\phi_0) r^{-1/2} g'(\phi_0)
     = \int_{\partial \Gamma} r^{-1}g(\phi_0)g'(\phi_0). 
    \]
    We now define 
     \[
      \Psi_1 := g(\phi)[2R_1^{-3/2}(r-R_{1}/2)]_+,
     \]
     and note that 
     \[
      \int_{\Gamma\cap B_{R_1}} g^{ij}\partial_i \Psi_1 \partial_j \Psi_1 \leq C_1 \int_0^{R_1} r^2 R_1^{-3} \leq C_1. 
     \]
     Similarly if 
     \[
      \Psi_2 := -g(\phi) R_2^{-3/2}[r-2R]_+,
     \]
     then 
     \[
      \int_{\Gamma \setminus B_{R_2}} g^{ij}\partial_i \Psi_2 \partial_j \Psi_2 \leq C_2 \int_{0}^{2R_2} r^2 R_2^{-3} \leq C_2.
     \]
     Now if we define 
     \[
      \tilde{F}(x)
      :=
      \begin{cases}
       \Psi_1(x) &\text{ if } r< R_1 \\
       G(x)  &\text{ if } R_1 \leq r \leq R_2 \\
       \Psi_2(x)  &\text{ if } r > R_2,
      \end{cases}
     \]
     then 
     \[
     \begin{aligned}
      \frac{1}{H_1}\frac{g'(\phi_0)}{g(\phi_0)} 
      &\leq E(\tilde{F},\Gamma) \\
      &\leq \frac{\int_{\Gamma}g^{ij} \tilde{F}_i \tilde{F}_j}{\int_{\partial \Gamma \cap (B_{R_2}\setminus B_{R_1})}Hr^{-1/2}g^2(\phi_0)} \\
      &\leq \frac{C_1 + C_2}{\int_{\partial \Gamma \cap (B_{R_2}\setminus B_{R_1})}Hr^{-1/2}g^2(\phi_0)}
       + \frac{\int_{\Gamma}g^{ij} \tilde{F}_i \tilde{F}_j}{\int_{\partial \Gamma \cap (B_{R_2}\setminus B_{R_1})}Hr^{-1/2}g^2(\phi_0)} \\
       &= \frac{C_1 + C_2}{\int_{\partial \Gamma \cap (B_{R_2}\setminus B_{R_1})}Hr^{-1/2}g^2(\phi_0)}
             + \frac{g'(\phi_0)g(\phi_0)}{H_1g^2{\phi_0}}.
      \end{aligned}
     \]
     Now as $R_1 \to 0$ and $R_2 \to \infty$ the first term in the last line above goes to zero. Hence, we conclude that 
      \[
        \min E(F,\Gamma) = \frac{1}{H_1}\frac{g'(\phi_0)}{g(\phi_0)}. 
      \]
      Then \eqref{e:variat} holds if and only if $H_1 \leq g'(\phi_0)/g(\phi_0)$. 
 \end{proof}

\begin{corollary}   \label{c:stable}
 There exists $\e_0>0$ such that if $c\leq \e_0$, then $\Phi_c$ is a stable solution; i.e., if $\Gamma =\{\Phi_c>0\}$, then \eqref{e:variat} holds. 
\end{corollary}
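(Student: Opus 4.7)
The plan is to invoke Lemma~\ref{l:stable} and verify the inequality $H_1 \leq g'(\phi_0)/g(\phi_0)$ by a continuity/perturbation argument around $c = 0$, where both quantities can be computed explicitly.

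First I would analyze $H_1(c)$ and $\phi_0(c)$ in the limit $c \to 0$. At $c = 0$ the cone is flat and the symmetric $1$-homogeneous solution is $\Phi_0 = r\cos\phi$, so $\phi_0(0) = \pi/2$ and $t_0(0) = \cos\phi_0(0) = 0$. By continuous dependence of the solution of \eqref{e:beta} with $\beta = 1$ on the parameter $c$, together with the transversality condition $f_{1,c}'(\phi_0) = 1 \neq 0$ (so the implicit function theorem applies at the zero), we get $\phi_0(c) \to \pi/2$, and therefore
\[
  H_1(c) \;=\; \frac{|t_0(c)|}{\sqrt{1-t_0(c)^2}} \;\longrightarrow\; 0 \qquad \text{as } c \to 0^+.
\]

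Next I would study the ratio $g'(\phi_0)/g(\phi_0)$ as $c \to 0$. Let $g_c$ denote the solution of \eqref{e:beta} with $\beta = -1/2$ normalized by $g_c(0) = 1$, $g_c'(0) = 0$. Standard ODE continuous-dependence on $c$ (combined with the analyticity of $g_c$ near $\phi = 0$, which handles the mild singularity of $\cot\phi$) yields $g_c(\phi_0(c)) \to g_0(\pi/2)$ and $g_c'(\phi_0(c)) \to g_0'(\pi/2)$. At $c = 0$ the ODE reads $g_0'' + \cot\phi \cdot g_0' = \tfrac{1}{4}g_0$, so at any interior $\phi_\ast$ with $g_0(\phi_\ast) > 0$ and $g_0'(\phi_\ast) = 0$ one has $g_0''(\phi_\ast) > 0$, i.e., interior critical points of $g_0$ are strict local minima. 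Together with $g_0(0) = 1$, $g_0'(0) = 0$, and $g_0 \geq 0$, this forces $g_0' > 0$ on $(0,\pi/2]$, so $g_0'(\pi/2)/g_0(\pi/2) > 0$.

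Combining the two, for $c$ sufficiently small we obtain $H_1(c) < g_c'(\phi_0(c))/g_c(\phi_0(c))$, and Lemma~\ref{l:stable} then gives that \eqref{e:variat} holds, i.e., $\Phi_c$ is stable. The main technical obstacle is making the continuity statements uniform near the singular endpoint $\phi = 0$ of \eqref{e:beta} and tracking the moving zero $\phi_0(c)$; this is handled via a local power-series representation of $g_c$ at $\phi = 0$ with coefficients depending continuously (in fact analytically) on $c$, plus the implicit function theorem applied to the equation $f_{1,c}(\phi_0) = 0$ using $f_{1,c}'(\phi_0) = 1$.
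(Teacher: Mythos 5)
Your proposal is correct and follows essentially the same route as the paper: invoke Lemma~\ref{l:stable} and pass to the limit $c\to 0$ in \eqref{e:equiv}, using $H_1\to 0$, $\phi_0\to\pi/2$, and continuous dependence of the $\beta=-1/2$ solution on $c$. In fact you supply a detail the paper's two-line proof glosses over (it writes ``$g_c\to\cos\phi$'', which can only refer to $f_{1,c}$): namely the verification, via the limit ODE $g_0''+\cot\phi\,g_0'=\tfrac14 g_0$ and the no-interior-maximum argument, that $g_0'(\pi/2)/g_0(\pi/2)$ is strictly positive, so the right-hand side of \eqref{e:equiv} stays bounded away from zero while $H_1$ vanishes.
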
 
 
\begin{proof}
 From Lemma \eqref{e:equiv} we have that \eqref{e:variat} is holds if and only if 
  \[
   H_1 \leq \frac{g_c'(\phi_0)}{g_c(\phi_0)}.
  \]
  As $c\to 0$ we have that $H_1 \to 0$,  $\pi/2 \leq \phi_0 \to \pi/2$, and $g_c \to \cos \phi$. Then the above inequality will be satisfied for small enough $c$. 
\end{proof}

We are now ready to prove our first Main Theorem. 
\begin{proof}[Proof of Theorem \ref{t:main1}]
 We first normalize by letting $g_c(0)=1$. If $0\leq c_1<c_2$, the $g_{c_1}$ is a supersolution to \eqref{e:beta} for $c_2$, and 
 so $g_{c_1}>g_{c_2}$ on $(0,\pi)$. Furthermore, for fixed $\phi_0$ if $g_{c_1}(\phi_0)-h=g_{c_2}(\phi_0)$, then $g_{c_2}-h$ is a subsolution 
 on the interval $(\phi_0,\pi)$ to \eqref{e:beta} for 
 $c_2$. Then $g_{c_1}-h>g_{c_2}$ on $(\phi_0,\pi)$. It follows that $g_{c_1}'>g_{c_2}'$ on $(0,\pi)$. Then $g_{c_1}/g_{c_2}$ is increasing on 
 $(0,\pi)$, so that $\log (g_{c_2}/g_{c_1})$ is increasing on $(0,\pi)$. Then 
  \begin{equation}  \label{e:ephi}
   \frac{g_{c_1}'}{g_{c_1}} \geq \frac{g_{c_2}'}{g_{c_2}} \text{  on } (0,\pi) \text{  if } 0\leq c_1 < c_2. 
  \end{equation}
 Now if $g_{c_2}(\phi_2)=0$, then the mean curvature of $\Gamma$ is 
  \[
   H_1 = -\frac{\cos \phi_2}{\sin \phi_2}. 
  \]
 Then \eqref{e:variat} which is equivalent to \eqref{e:equiv} holds if and only if 
  \begin{equation}   \label{e:phinew}
   1 \leq -\frac{\sin\phi_2}{\cos\phi_2} \frac{g_{c_2}'(\phi_2)}{g_{c_2}(\phi_2)}. 
  \end{equation}
 
 From Corollary \ref{c:stable} there exists $c_2 >0$ such that $\Phi_{c_2}$ is a stable solution, so that  \eqref{e:phinew} holds where
 $g_{c_2}(\phi_2)=0$. We seek
 to show that if $0\leq c_1<c_2$, then \eqref{e:phinew} hols for $c_1$ where $g_{c_1}(\phi_1)=0$. 
 We take the derivative 
  \[
   \begin{aligned}
   \frac{d}{d\phi} \left(-\frac{\sin\phi}{\cos\phi} \frac{g_{c_2}'(\phi)}{g_{c_2}(\phi)} \right)
    &= -\frac{1}{\cos^2\phi} \frac{g_{c_2}'(\phi)}{g_{c_2}(\phi)} \\
     & \quad -\frac{\sin \phi}{\cos \phi} \left[\frac{g_{c_2}(\phi)g_{c_2}''(\phi)-[g_{c_2}'(\phi)]}{g_{c_2}^2(\phi)} \right] \\
     &=  -\frac{1}{\cos^2\phi} \frac{g_{c_2}'(\phi)}{g_{c_2}(\phi)} \\ 
     & \quad +\frac{g_{c_2}'(\phi)}{g_{c_2}(\phi)} - \frac{\sin \phi}{\cos \phi} \frac{1}{4(1+c^2)} 
        + \frac{\sin \phi}{\cos \phi} \frac{[g_{c_2}'(\phi)]^2}{g_{c_2}^2(\phi)} \\
      &=\frac{\sin \phi}{\cos \phi} \left[ -\frac{\sin \phi}{\cos \phi} \frac{g_{c_2}'(\phi)}{g_{c_2}(\phi)}
             +\frac{[g_{c_2}'(\phi)]}{g_{c_2}^{2}(\phi)} - \frac{1}{4(1+c^2)}\right].   
   \end{aligned}
  \]
 Since \eqref{e:phinew} holds at $\phi_2$, and since $\phi_2 >\pi/2$, it follows that in a small neighborhood around $(\phi_2 - \e, \phi_2 +\e)$
 that the above derivative is negative. Then if $c_3 \in (c_2 - \delta, c_2+\delta) $ for small enough delta, then $\phi_{3} \in (\phi_2 - \e, \phi_2 + \e)$
 where $g_{c_3}(\phi_3)=0$. Then if $c_3 \in (c_2 - \delta, c_2)$ we have that
  \[
   1 \leq-\frac{\sin\phi_2}{\cos\phi_2} \frac{g_{c_2}'(\phi_2)}{g_{c_2}(\phi_2)} 
      \leq -\frac{\sin\phi_3}{\cos\phi_3} \frac{g_{c_2}'(\phi_3)}{g_{c_2}(\phi_3)}
      \leq -\frac{\sin\phi_3}{\cos\phi_3} \frac{g_{c_3}'(\phi_3)}{g_{c_3}(\phi_3)}.
  \]
 The last inequality follows from \eqref{e:ephi}. We have shown that the set of points $c \in [0,\infty)$ for which \eqref{e:phinew} holds is open to the left. 
 Since the inequality is preserved in a limit, it follows that the set of points $c \in [0,\infty)$ for which \eqref{e:phinew} holds is also closed to the left. 
 Combining this with Lemma \ref{l:big}
 there is then a last point $c_0<\infty$ such that $\Phi_c$ is stable if and only if $0\leq c \leq c_0$.  From Corollary \ref{c:stable} we have that $c_0>0$. This concludes the proof. 
\end{proof}

We now give the 
\begin{proof}[Proof of Corollary \ref{c:avoidvertex}]
 Let $u$ be a minimizer of \eqref{e:conefunct} with $c>c_0$ with $c_0$ given in Theorem \ref{t:main1}. Suppose by way of contradiction that the vertex 
 $0 \in \partial\{u>0\}$. By Proposition \ref{p:blowup} there exists a $1$-homogeneous minimizer $u_0$ of \eqref{e:conefunct}. Then $u_0$ is also stable, and so by 
 Lemmas \ref{l:single} and   \ref{l:alexandrov} we conclude $u_0 \equiv \Phi_c$. But $\Phi_c$ is not stable for $c>c_0$, and we obtain a contradiction.  
\end{proof}

\section{A minimizer for $c>0$.}  \label{s:min}

 In this section we prove that the symmetric solution $\Phi_c$ defined in Section \ref{s:symmetric} is indeed a minimizer for $0\leq c \leq c_0$ for $c_0$ small enough. This is accomplished by trapping 
 $\Phi_c$ between a continuous family of sub- and supersolutions to the free boundary problem \eqref{e:phase1}. 
 This shows that $\Phi_c$ is a unique solution subject to its own boundary data. Since a minimizer does
 exist and is a solution, then $\Phi_c$ is a minimizer. We first construct a continuous family of subsolutions from below.

 \begin{lemma}  \label{l:below1}
  There exists $c_0 >0$ such that if $0 \leq c \leq c_0$, and if $u$ is a solution to \eqref{e:phase1} with $u = \Phi_c$ on $\partial B_1$, then 
  $u \geq \Phi_c$ in $B_1$. 
 \end{lemma}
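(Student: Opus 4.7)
The plan is to identify $\Phi_c$ as the unique solution to \eqref{e:phase1} on $B_1$ with its own boundary data, by trapping it below by a continuous family of strict subsolutions. Concretely, I would construct a family $\{v_t\}_{t\in(0,t_0]}$ of classical strict subsolutions to \eqref{e:phase1} with $v_t \to \Phi_c$ uniformly on compacta as $t \to 0$ and with $v_t \leq \Phi_c$ on $\partial B_1$ (strictly wherever both are positive) for every $t$. Given such a family, the classical comparison principle for free boundary problems of this type applies: on the positivity set of $v_t$ both functions are $\Delta_c$-harmonic, so the strong maximum principle rules out interior contact in $\{u>0\}$; at a smooth free boundary touching point the strict gradient inequality $|\nabla_c v_t| > 1 = |\nabla_c u|$ yields a Hopf-type contradiction, using that $\partial\{u>0\}$ is smooth away from the vertex by Proposition \ref{p:solution}; and contact at the vertex is excluded by the $1$-homogeneity of $v_t$ combined with the two-sided linear growth of $u$ from Proposition \ref{p:growth}. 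Hence $v_t \leq u$ on $B_1$, and sending $t \to 0$ gives $\Phi_c \leq u$.

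The natural candidate family is $v_t(r,\phi) = \mu(t)\,r\,h_t(\phi)$, where $h_t$ solves the ODE \eqref{e:beta} with $\beta = 1$ on $[0,\phi_t]$ subject to $h_t'(0) = 0$ and $h_t(\phi_t) = 0$ for an angle $\phi_t < \phi_0$ close to the vanishing angle of $f_{1,c}$, and where $\mu(t) > 1$ is chosen so that $|\nabla_c v_t| = \mu(t)\,|h_t'(\phi_t)| > 1$ strictly on the perturbed free boundary. By construction, $v_t$ is $\Delta_c$-harmonic in its positivity set and is a classical strict subsolution of \eqref{e:phase1} away from the vertex. The delicate requirement is that $\mu(t)$ and $\phi_t$ can be simultaneously tuned so that also $\mu(t)\,h_t(\phi) \leq f_{1,c}(\phi)$ for $\phi \in [0,\phi_t]$. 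To leading order $h_t(\phi) \approx f_{1,c}(\phi + (\phi_0 - \phi_t))$, which lies strictly below $f_{1,c}$ by a gap of order $\phi_0 - \phi_t$ because $f_{1,c}$ is decreasing; on the other hand $|h_t'(\phi_t)|$ differs from $1$ by a term of the same order, forcing $\mu(t) - 1$ to be comparable to $\phi_0 - \phi_t$. Balancing these first-order corrections so that the product $\mu(t)\,h_t$ still fits under $f_{1,c}$ is a quantitative ODE computation that succeeds when $c$ is small.

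The main obstacle is precisely this trade-off: the subsolution condition $|\nabla_c v_t| > 1$ calls for a multiplicative enhancement of $\Phi_c$, while the boundary inequality $v_t \leq \Phi_c$ on $\partial B_1$ forbids such an enhancement globally. The smallness of $c$ enters because for $c$ near zero the profile $f_{1,c}$ is a perturbation of the flat profile $\cos\phi$ (with $\phi_0$ near $\pi/2$), and explicit expansions of the solutions of \eqref{e:beta} in the parameter $t$ allow the required quantitative balance to be made rigorous. For large $c$ this balance fails, consistently with Theorem \ref{t:main1} and with the fact that $\Phi_c$ may not even be stable in that regime. Once the family is in hand, the comparison and limit arguments follow standard lines, and the full minimality claim of Theorem \ref{t:main2} will then follow from a parallel construction of strict supersolutions from above combined with Proposition \ref{p:solution}.
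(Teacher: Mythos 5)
Your overall strategy --- trapping $\Phi_c$ from below by a continuous family of strict subsolutions converging to it and then sliding --- is exactly the paper's strategy, but your candidate family does not exist. For fixed $c$ and fixed $\beta=1$, equation \eqref{e:beta} is a linear second-order ODE whose solutions regular at the pole (i.e.\ with $h'(0)=0$) form a one-dimensional space of scalar multiples of $f_{1,c}$, and every such solution has its first zero at the same angle $\phi_0$. Imposing in addition $h_t(\phi_t)=0$ with $\phi_t<\phi_0$ therefore forces $h_t\equiv 0$. The shifted profile $f_{1,c}(\phi+(\phi_0-\phi_t))$ you use ``to leading order'' neither solves \eqref{e:beta} (the coefficient $\cot\phi$ is not translation invariant) nor satisfies $h_t'(0)=0$. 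Changing the homogeneity to some $\beta_t\neq 1$ to move the zero does not repair this: then $|\nabla_c v_t|$ on the free boundary scales like $r^{\beta_t-1}$ and cannot be bounded below by $1$ uniformly along the whole ray $\{\phi=\phi_t\}$.

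A second, independent problem is the vertex. Any $1$-homogeneous subsolution has its free boundary passing through $0$, and your sliding argument has no mechanism to exclude first contact there: Proposition \ref{p:growth} gives $C_1 r\le \sup_{\C_r}u$ with a constant depending on $u$, which is perfectly compatible with contact occurring exactly at the vertex, where $\partial\{u>0\}$ is not known to be smooth and no Hopf-type argument is available. The paper resolves both difficulties simultaneously by using a subtractive perturbation of negative homogeneity rather than a multiplicative enhancement: it sets $v_\e=rf_{1,c}(\phi)-\e r^{-1/2}(M-\cos\phi)$ with $M$ large, so that the correction is superharmonic for $\Delta_c$ (hence $(v_\e)_+$ is subharmonic), $\{v_\e>0\}$ is strictly contained in $\{\Phi_c>0\}$ and stays away from the vertex for every $\e>0$, and $v_\e<\Phi_c=u$ on $\partial B_1$. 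The strict inequality $|\nabla_c v_\e|>1$ on $\partial\{v_\e>0\}$ is then proved by evaluating $|\nabla_c v_\e|^2$ along $\{v_\e=0\}$ via the relation $rf=\e r^{-1/2}g$, observing equality at $\phi=\phi_0$, and showing that the $\phi$-derivative of the resulting expression is negative for $\phi\le\phi_0$ once $c$ is small; this is where the smallness of $c$ genuinely enters. To salvage your write-up you would need to replace the homogeneous multiplicative family by such a singularly weighted subtractive one.
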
 

 \begin{proof}
  We let  $\Phi_c=rf_{1,c}(\phi)$. For convenience throughout this proof we will simply write $f(\phi)$ in place of 
  $f_{1,c}(\phi)$. We consider $(v_{\e})_+$ where $v_{\epsilon}:=rf(\phi)-\e r^{\beta}g(\phi)$ and
   \[
    g(\phi):=M- \cos(\phi).
   \] 
  Notice that
   \[
   \Delta_c r^{\beta}g(\phi) = r^{\beta-2}\left[\frac{\beta(\beta+1)}{1+c^2}(M-\cos(\phi))+ 2\cos(\phi)\right]
   \]
   By choosing $-1<\beta<0$ and $M$ large enough depending on $\beta$ and $c_0$, then $\Delta_c r^{\beta}g(\phi) \leq 0$ for $0 \leq c\leq c_0$. 
   Thus,  $\Delta_c (v_{\e})_+ \geq 0$ independent of $\e$. For convenience throughout the remainder of the proof we will write simply $v$ in place of $v_{\e}$.

   We note that 
    \[
     \begin{aligned}
      v_r &= f(\phi)-\e\beta r^{\beta-1}g(\phi). \\
      v_{\phi}&=rf'(\phi)-\e r^{\beta} g'(\phi). 
     \end{aligned}
    \] 
   Furthermore, on  $\{v=0\}$ we have $rf(\phi)=\e r^{\beta} g(\phi)$, so that on $\{v=0\}$ we obtain 
    \begin{equation}   \label{e:express}
     | \nabla_c v |^2 = \frac{1}{1+c^2} v_r^2 +\frac{1}{r^2} v_{\phi}^2 
     = \frac{(1-\beta)^2}{1+c^2} f^2(\phi) + \left[f'(\phi)-f(\phi) \frac{g'(\phi)}{g(\phi)} \right]^2. 
    \end{equation}
    In order to use a comparison principle, we need $|\nabla_c v |^2 > 1$ on $\partial \{v>0\}$. We let $\phi_0$ be such that $f(\phi_0)=0$. Notice that 
    $\phi_0 > \pi/2$. Furthermore, since $\e r^{\beta}g(\phi)\geq 0$, then $v(r,\phi)=0$ only when $\phi < \phi_0$. Finally, 
    we note that 
     \begin{equation}  \label{e:subderiv}
      |\nabla_{c} v(r, \phi_0)|^2 = \left[f'(\phi_0)-f(\phi_0) \frac{g'(\phi_0)}{g(\phi_0)} \right]^2 =  |f'(\phi_0)|^2 = |\nabla_c \Phi_c(r,\phi_0) |^2 =1.     
     \end{equation}
     We now take the derivative in $\phi$ of the last expression in \eqref{e:express}. 
     \[
      \begin{aligned}
      & \frac{d}{d \phi}\left( \frac{(1-\beta)^2}{1+c^2} f^2(\phi) + \left[f'(\phi)-f(\phi) \frac{g'(\phi)}{g(\phi)} \right]^2 \right)   \\
      &= \frac{2(1-\beta)^2}{1+c^2} f(\phi)f'(\phi) + 2\left[f'(\phi) -f(\phi)\frac{g'(\phi)}{g(\phi)}\right] \times \\
        & \quad \left[f''(\phi) - f'(\phi)\frac{g'(\phi)}{g(\phi)} - f(\phi) \left(\frac{g(\phi)g''(\phi)-[g'(\phi)]^2}{g^2(\phi)} \right) \right].   
      \end{aligned}
     \] 
     We recall that 
      \[
       f''(\phi)= -\frac{\cos \phi}{\sin \phi} f'(\phi) - \frac{2}{1+c^2} f(\phi). 
      \]
     Substituting this into the computed derivative, reorganizing terms, and dividing by $2$, we obtain that the derivative (divided by $2$)
     is the sum of the following three pieces 
      \begin{equation}  \label{e:three}
       \begin{aligned}
        &\left[\frac{(1-\beta)^2}{1+c^2}-\frac{2}{1+c^2}-\left(\frac{g(\phi)g''(\phi)-[g'(\phi)]^2}{g^2(\phi)}\right) \right]f(\phi)f'(\phi) \\
        &\quad +f^2(\phi)\frac{g'(\phi)}{g(\phi)} \left[\frac{2}{1+c^2} + \left(\frac{g(\phi)g''(\phi)-[g'(\phi)]^2}{g^2(\phi)} \right) \right] \\
        &\quad +\left[f'(\phi)-f(\phi) \frac{g'(\phi)}{g(\phi)} \right] \left[\frac{\cos \phi}{\sin \phi} + \frac{g'(\phi)}{g(\phi)} \right](-f'(\phi)) \\
        &= I + II + III.
       \end{aligned}
      \end{equation}
     We choose $\beta=-1/2$. We also have that $f''(\phi)<-\delta_1 <0$ for $0\leq \phi \leq \phi_0 $ and some $\delta_1>0$ depending on $c_0$ 
     and independent of $c$ if $0\leq c \leq c_0$. 
     Then choosing $M$ large enough depending on $c_0$, there exists a constant $\delta_2$ depending on $c_0$
     such that for $0\leq c\leq c_0$ we have 
      \[
       f(\phi)\frac{g'(\phi)}{g(\phi)} \leq \delta_2 f'(\phi) \text{ for } \phi \geq \phi_0.
      \]
      Then 
      \[
       I + II \leq \left[\frac{(1-\beta)^2}{1+c^2}-\frac{2+\delta_2}{1+c^2}-(1+\delta_2)\left(\frac{g(\phi)g''(\phi)-[g'(\phi)]^2}{g^2(\phi)}\right) \right]f(\phi)f'(\phi).
      \]
      Choosing again $M$ large enough, there exists $C_1$ such that if $0 \leq c \leq c_0$, then 
      \[
      I + II \leq C_1 f(\phi)f'(\phi) \leq  0 \quad \text{ for } 0\leq  \phi \leq \phi_0. 
      \]
       The parameter $M$ is now fixed. For $c_0$ small, the angle $\phi_0$ is close to $\pi/2$. Therefore,  
       to control $III$ we choose $c_0$ small enough so that 
        \[
         \frac{\cos \phi}{\sin \phi} + \frac{g'(\phi)}{g(\phi)} > 0 \quad \text{ for } \phi\leq \phi_0. 
        \] 
       Then $III \leq 0$, and so  $I + II + III \leq 0$. Furthermore, in the above proof it is clear that $I + II + III<0$ when 
       $0<\phi<\phi_0$, so that  $|\nabla_c v|^2 >1$ on $\partial \{v=0\}$. 
       Thus we have shown that $\Delta (v_\e)_{+} \geq 0$ and $|\nabla (v_\e)_+|>1$ on $\partial \{(v_{\e})_+>0\}$, and this is independent of $\e>0$.  
              
       Now let $u$ be a solution to \eqref{e:phase1} with $u = \Phi_c$ on $\partial B_1$. Suppose by way of contradiction that 
       there exists $x_0 \in B_1$ such that $u(x_0)< \Phi_c (x_0)$. We have that $v_{\e}<u$ on $\partial B_1$ for all $\e>0$, and
       we may choose $\e $ large enough so that $v_{\e}< u$ in $B_1$. Then $(v_{\e})_+< u$ in $\{u>0\}$. 
       Also, $\{(v_{\e})_+ > 0\} \subset \{u>0\}$. We now continuously shrink $\e$ until either $(v_{\e})_+$ touches $u$ from below in $\{u>0\}$, or 
       $\partial \{(v_{\e})_+>0\}$ touches $\partial \{u>0\}$. Since $(v_{\e})_+ \to \Phi_c$ pointwise on $B_1\setminus \{0\}$, there exists an $\e_0$ and $x_1\neq 0$ such that 
       $(v_{\e})_+ \leq u$ in $B_1$ and either 
       $(v_{\e_0})_+(x_1)=u(x_1)>0$ or $x_1 \in (\partial \{(v_{\e_0})_+>0\}\cap \partial\{u>0\})$. Since $\Delta_c (v_{\e_0})>0$ in 
       $\{(v_{\e_0})_+>0\}$, the first possibility is a violation of the comparison principle. Since $(v_{\e_0})_+ \leq u$, if $(v_{\e_0})_+(x_1)=u(x_1)=0$, then 
       since $|\nabla_c u(x_1)|=1 < |\nabla_c (v_{\e_0})_+|$ we also obtain  a contradiction. 
       Therefore, if $0 \leq c \leq c_0$ and if $u$ is a solution to \eqref{e:phase1} with $u = \Phi_c$ 
       on $\partial B_1$, then $u \geq \Phi_c$. 
     \end{proof}

\begin{lemma}  \label{l:above1}
 There exists $c_0 >0$ such that if $0 \leq c \leq c_0$, and if $u$ is a solution to \eqref{e:phase1} with $u=\Phi_c$ on $\partial B_1$, then $u \leq \Phi_c$. 
 \end{lemma}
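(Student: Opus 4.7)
The approach is to bracket $\Phi_c$ between subsolutions (built in Lemma~\ref{l:below1}) and a matching one-parameter family of strict supersolutions, and to slide the supersolutions down by continuously decreasing $\epsilon$. I would set
\[
w_\epsilon(r,\phi) := rf_{1,c}(\phi) + \epsilon r^\beta h(\phi),\qquad \beta = -\tfrac{1}{2},\qquad h(\phi) := M - \cos\phi,
\]
with $M$ large to be fixed later. The computation that yields $\Delta_c(r^\beta g)\le 0$ in the proof of Lemma~\ref{l:below1} uses only $\beta$ and $M$ and the specific form $M-\cos\phi$, so it applies verbatim to $h$; hence $\Delta_c w_\epsilon \le 0$ in $\{w_\epsilon>0\}$ and consequently $\Delta_c (w_\epsilon)_+ \le 0$ on its positivity set.

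Next I would verify the free boundary gradient condition. The free boundary of $w_\epsilon$ lies in the angular region $\phi>\phi_0$, where $f:=f_{1,c}<0$, and satisfies $rf = -\epsilon r^\beta h$. Inserting this relation into $|\nabla_c w_\epsilon|^2$ reproduces exactly the closed form in \eqref{e:express}, namely
\[
|\nabla_c w_\epsilon|^2 = \frac{(1-\beta)^2}{1+c^2} f^2(\phi) + \left[ f'(\phi) - f(\phi)\frac{h'(\phi)}{h(\phi)} \right]^2,
\]
which equals $|f'(\phi_0)|^2 = 1$ at $\phi=\phi_0$. Differentiating in $\phi$ yields the three-term decomposition $I+II+III$ of \eqref{e:three} with $g$ replaced by $h$. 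At $\phi_0$ both $I$ and $II$ vanish (since $f(\phi_0)=0$), and $III$ reduces to $-2\bigl[\cos\phi_0/\sin\phi_0 + h'(\phi_0)/h(\phi_0)\bigr]$, which is strictly negative precisely under the same smallness of $c_0$ and largeness of $M$ exploited at the end of Lemma~\ref{l:below1}. By continuity the derivative remains strictly negative on a right-neighborhood $[\phi_0,\phi_0+\delta]$, so $|\nabla_c w_\epsilon| < 1$ strictly on the portion of the free boundary with angle in this range.

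With those two ingredients in place the comparison step runs as follows. Suppose for contradiction $u(x_0)>\Phi_c(x_0)$ at some $x_0\in B_1$. Since $u$ is bounded (maximum principle together with $u|_{\partial B_1}=\Phi_c$) and $r^\beta h$ blows up at the origin, one can take $\epsilon$ large enough that $(w_\epsilon)_+ > u$ throughout $\overline{B_1}$. Shrink $\epsilon$ to the largest $\epsilon_0>0$ at which strictness is lost, with equality at some $x_1\in\overline{B_1}$. Since $w_\epsilon > u$ on $\partial B_1$ for every $\epsilon>0$, the point $x_1$ is interior. If $x_1\in\{u>0\}$, the strong maximum principle applied to the nonnegative superharmonic function $(w_{\epsilon_0})_+ - u$ forces $(w_{\epsilon_0})_+\equiv u$, contradicting the strict boundary inequality. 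If $x_1\in\partial\{u>0\}$, then necessarily $x_1\in\partial\{w_{\epsilon_0}>0\}$ and Hopf's lemma applied to $(w_{\epsilon_0})_+ - u$ gives $|\nabla_c w_{\epsilon_0}|(x_1)\ge|\nabla_c u|(x_1)=1$, contradicting the strict gradient bound.

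The main obstacle will be propagating the strict inequality $|\nabla_c w_\epsilon|<1$ from an infinitesimal right-neighborhood of $\phi_0$ to the full range of angles actually visited by the free boundary of $w_{\epsilon_0}$. Because $\epsilon_0$ is determined by $u$ and need not be small, the free boundary of $w_{\epsilon_0}$ can reach angles noticeably past $\phi_0$; on such an interval the term $I$ reverses sign (as $ff'>0$ once $\phi>\phi_0$) and must be dominated uniformly by $III$. Sharpening the choice of $c_0$ and $M$ so that this domination persists throughout the whole relevant angular interval is the technical heart of the proof, and mirrors on the opposite side of $\phi_0$ the tuning already carried out in Lemma~\ref{l:below1}.
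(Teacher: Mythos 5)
Your construction of the comparison function is the same as the paper's starting point, and your sliding/touching argument at the end is standard and fine where it applies. The genuine gap is the one you flag in your last paragraph, but your proposed cure (retuning $M$ and $c_0$) cannot work, and the paper resolves it by a structural modification rather than by sharpening constants. The free boundary of $v_\epsilon = rf_{1,c}(\phi)+\epsilon r^{-1/2}(M-\cos\phi)$ is the surface $r^{3/2}=\epsilon\, g(\phi)/(-f_{1,c}(\phi))$, which sweeps through \emph{every} angle $\phi\in(\phi_0,\pi]$ as $r$ decreases toward the vertex. On that surface formula \eqref{e:express} gives
\[
|\nabla_c v_\epsilon|^2=\frac{(1-\beta)^2}{1+c^2}f^2(\phi)+\Bigl[f'(\phi)-f(\phi)\tfrac{g'(\phi)}{g(\phi)}\Bigr]^2 ,
\]
and near $\phi=\pi$ the first term alone is approximately $\tfrac{9}{4}f^2(\pi)\approx\tfrac94>1$ for small $c$ (with the normalization $|f'(\phi_0)|=1$, so $f(\pi)\approx-1$). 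No choice of $M$ or $c_0$ repairs this: the supersolution inequality $|\nabla_c v_\epsilon|<1$ genuinely fails on the inner portion of the free boundary, and a touching point of your sliding family can land exactly there, killing the Hopf step.

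The paper's proof therefore does not use $v_\epsilon$ globally. It fixes an angle $\phi_2\in(\phi_0,\phi_1)$ up to which the monotonicity of \eqref{e:three} guarantees $|\nabla_c v_c|<1$, locates the radius $r_0$ at which the free boundary of $v_c$ reaches $\phi_2$, and inside $B_{r_0}$ (rescaled to $B_1$) \emph{replaces} $v_c$ by the $\Delta_c$-harmonic function $\tilde v_c$ in the cap cone $U=B_1\cap\{x_3>\cos\phi_2\}$ vanishing on the flat face, pasting the two pieces into $w_c$. One must then check that the pasting across $\partial B_1$ keeps $\Delta_c w_c\le 0$ weakly (a normal-derivative comparison), that $|\nabla_c\tilde v_c|<1$ on the flat face, and that the wedge corner of $\partial\{w_c>0\}$ on $\partial B_1$ cannot be a touching point (sublinear growth there). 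These verifications are done explicitly for $c=0$, where $\tilde v_0$ is linear, and transferred to small $c>0$ by the compactness and $C^1$-convergence results for the wedge domain proved in Appendix B. This truncation-and-pasting step, together with its $C^1$-convergence machinery, is the actual content of the lemma and is absent from your proposal.
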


\begin{proof}
 The beginning of the proof is similar to the proof of Lemma \ref{l:below1}.  We consider 
 $v_{c}=rf_{1,c}(\phi)+\e r^{\beta} g(\phi)$ with $\beta=-1/2$ and $g(\phi)=M-\cos\phi$. We will write 
 $f$ in 
 place of $f_{1,c}$ when 
 $c$ is understood. We use the subscript $c$ on the function $v_c$ because later in the proof we will let $c$ vary. 
 
 On the set $\{v_c=0\}$ we have 
  $rf(\phi)=-r^{\beta}g(\phi)$, so that once again we obtain that on $\{v_c=0\}$ we have
  \[
    |\nabla_c v_c|^2 = \frac{(1-\beta)^2}{1+c^2} f^2(\phi) + \left[f'(\phi)-f(\phi) \frac{g'(\phi)}{g(\phi)} \right]^2.   
  \]
 One main difference from the proof of Lemma \ref{l:below1} is that now if $v_c(r,\phi)=0$, then $\phi \geq \phi_0$ where 
 $f(\phi_0)=0$. Using the same computations as in the proof of Lemma \ref{l:below1}, we obtain by taking the derivative in $\phi$ that
 $|\nabla_c v_c(r,\phi)|^2 < 1$ provided that $\phi < \phi_1$ where $\phi_1>\phi_0$ and $\phi_1$ is determined by letting $c$ be small so that 
 the third term $III$ in \eqref{e:three} is negative. We now fix $\phi_2$ with $\phi_0<\phi_2 < \phi_1$. Notice that $\phi_0$ depends on $c$, but for small enough $c_0$,
 $\phi_1$ and $\phi_2$ will not depend on $c$ for $0\leq c\leq c_0$. For fixed $\e_0$, let $r_0$ be such that 
  \[
   r_0f(\phi_2)+ \e_0 r_0^{\beta}g(\phi_2) =0. 
  \]
 Then 
  \begin{equation}  \label{e:epsio}
   \e_0 = -r_0^{1-\beta}\frac{f(\phi_2)}{g(\phi_2)}. 
  \end{equation}
  We have that $|\nabla_c v_c|>1$ on $\partial \{v>0\}$ as long as $\phi\leq \phi_2$. We now redefine the function $v_c$ on $B_{r_0}$.   
  We first notice that from \eqref{e:epsio}, we may rescale by 
  \[
   v_{r_0} := \frac{v_c(r_0 x)}{r_0}. 
  \]
  The rescaled function $v_{r_0}$ is defined on $B_{1/r_0}$. Therefore, we may assume without loss of generality, that $\e=-f(\phi_2)/g(\phi_2)$
  and that we are redefining the values on $B_1$. We now define $U:= B_1 \cap \{x_3 > \cos(\phi_2)\}$ and also define
   \[
    \tilde{v}_c:=
     \begin{cases}
      \Delta_c \tilde{v}_c =0 &\text{ in } U \\
      \tilde{v}_c = v &\text{ on } \partial U \cap \{x_3 > \cos(\phi_2)\} \\
      \tilde{v}_c = 0 &\text{ on } \partial U \cap \{x_3 = \cos(\phi_2)\}.  
     \end{cases}
   \]
  Finally, we paste the two functions $\tilde{v}_c$ and $v_c$ by defining 
   \begin{equation}
    w_c:= 
     \begin{cases}
      (v_c)_+ &\text{ in } B_1^c \\
      \tilde{v}_c &\text{ in } U  \\
      0 &\text{ in } B_1 \setminus U.
     \end{cases}
   \end{equation}
  Using a compactness argument, we will show for $\phi_2$ fixed and small enough $c$, that $\Delta_c w_c\leq 0$ in $\{w_c>0\}$ and $|\nabla_c w_c|<1$ on 
  $\partial \{w_c>0\}\cap B_1^c$ and $\partial \{w_c>0\}\cap B_1$. Now $\{w_c>0\}$ is a wedge-type domain at $\partial \{w_c>0\}\cap \partial B_1$, but because
  of the angle of the wedge we will see that $\partial \{w_c>0\}\cap \partial B_1$ can never touch the free boundary $\partial\{u>0\}$ of a solution $u$ to \eqref{e:phase1}.

  We first show that if $c=0$, we obtain the needed properties for $w_0$. Then using a compactness argument, we show that if $c$ is small enough, that $w_c$ will
  also have the needed properties. 
  If $c=0$, then on $\partial B_1$ and $\phi<\phi_2$ we have that 
   \[
    v_0(1,\phi)= \cos \phi - \frac{\cos \phi_2}{g(\phi_2)} g(\phi),  
   \]
   so that 
   \[
    \tilde{v}_0 = r \left[ \cos \phi - \frac{\cos(\phi_2)}{M-\cos \phi_2} (M- \cos \phi)\right].  
    \]
   This is just a linear function, and we notice that 
   \[
    \frac{\partial \tilde{v}_0}{\partial x_3} (x_1,x_2,-\cos \phi_2) = 1 + \frac{\cos(\phi_2)}{M-\cos \phi_2} < 1. 
   \]   
   Furthermore, on $\partial B_1$ with $x_3 > \cos \phi _2$, if $\nu$ is the outward unit normal to $ B_1$ we have  
   \[
    \begin{aligned}
     \frac{d \tilde{v}_c}{d \nu}(1,\phi) &= \cos \phi - \frac{\cos \phi_2}{M- \cos \phi_2} (M-\cos \phi) \\
                                              &> \cos \phi - \beta \frac{\cos \phi_2}{M- \cos \phi_2} \cos \phi \\    
                                              &= \frac{d (v_c)_+}{d \nu}(1,\phi), 
    \end{aligned}
   \]
   as long as $M>2$. Then $\Delta w_0 >0$ weakly in $\{w_0>0\}$. 
    
    Now $U$ is not a $C^1$ domain. However by Lemma \ref{l:Uconv}, if we let $c \to 0$, then $\tilde{v}_c \to \tilde{v}_0$ in $C^1$ on $\overline{U}$. 
   Thus, for $c$ small enough we obtain that 
   \[
    |\nabla_c \tilde{v}_c| < 1 \text { on } \{x_3 = \cos \phi_2\}\cap B_1
   \]
  and 
   \[
    \frac{d \tilde{v}_c}{d \nu}(1,\phi) - \frac{d (v_c)_+}{d \nu}(1,\phi) > 0.
   \]
  Thus, for $c_0$ small enough and for $0\leq c\leq c_0$,  we have $\Delta_c w_c < 0$ weakly in $\{w_c >0\}$,  
  and a comparison principle holds.
  
  We now let $c_0$ be chosen as above. Let  $0 \leq c \leq c_0$ and let $u$ be a solution to \eqref{e:phase1} with $u = \Phi_c$ 
  on $\partial B_{\rho}$ with $\rho < - \cos \phi_2$ where $\phi_2$ defines $U$. Suppose that there exists $x_0 \in B_{\rho}$ such 
  that $u(x)>\Phi_c(x)$. We may choose $\e$ large enough so that $w_{\e}> u$ on $\overline{B}_{\rho}$. Since $w_{\e}>\Phi_c$ in $B_1$ for every $\e>0$, 
  then also $w_{\e}> u$ on $\partial B_{\rho}$ for every $\e>0$. By continuously moving $\e$ towards $0$, there exists $\e_1$ and $x_1\neq 0$ such that $w_{\e_1}\geq u$ 
  and either 
  $w_{\e_1}(x_1) = u(x_1)>0$ or $x_1 \in \partial\{w_{\e_1}>0\} \cap \{u>0\}$. If  $w_{\e_1}(x_1) = u(x_1)>0$, we obtain a contradiction since 
  $\Delta_c w_{\e_1}<0$ weakly in $\{w_{\e_1}>0\}$.  
  If  $x_1 \in \partial\{w_{\e_1}>0\} \cap \{u>0\}$ and $|x_1|\neq 1$, we again obtain a contradiction since
   $w_{\e_1} \geq u$, $|\nabla w_{\e_1}(x_1)|<1$, and $|\nabla u(x_1)|=1$. We now consider the last case in which $|x_1|=1$ and 
   $x_1 \in \partial\{w_{\e_1}>0\} \cap \{u>0\}$. Since $|\nabla v_{\e_1}|<1$ on $\partial B_1 \cap \partial \{v_{\e_1}>0\}$ and 
   $|\nabla \tilde{v}_{\e_1}| <1$ on $\partial B_{1} \cap \{\tilde{v}_{\e_1}>0\}$. Then 
    \[
     \sup_{B_{t}(x_1)} w_{\e_1} \leq \delta_3 t,
    \]
    for $x_1 \in \partial B_1 \cap \partial \{w_{\e_1}>0\}$ and for some constant $0<\delta_3<1$ and $t$ small enough. 
    We then again obtain a contradiction since $w_{\e_1}\geq u$. 
     Therefore, $u \leq \Phi_c$ on $B_{\rho}$. Since
    $\Phi_c$ is homogeneous, then by rescaling it is also true that 
   $u\leq \Phi_c$ for any solution $u$ to \eqref{e:phase1}  with $u = \Phi_c$ on $\partial B_1$.   
  \end{proof}

We now give the proof of our second main Theorem. 

\begin{proof}[Proof of Theorem \ref{t:main2}]
 Let $0\leq c \leq c_0$. From the Calculus of Variations there exists a minimizer $u$ of \eqref{e:conefunct} with $u = \Phi_c$ on $\partial B_1$. 
 By Proposition \ref{p:solution} the minimizer $u$ is a solution to \eqref{e:phase1} in $B_1$.  By  Lemma \ref{l:above1} we have  
 $u \leq \Phi_c$. The $c_0$ in the statement of Lemma \ref{l:below1} is greater than or equal to the $c_0$ in the statement of Lemma \ref{l:above1}, so that from 
 Lemma \ref{l:below1} we also have 
  $u \geq \Phi_c$ in $B_1$. Then $u \equiv \Phi_c$, and therefore $\Phi_c$ is a minimizer. 
 \end{proof}

\appendix

\section{A maximum principle}
 In order to prove nonnegative mean curvature of the free boundary of a homogeneous solution, we will need two Lemmas. If $v=rf(\theta)$ so that $v$
 is homogeneous of degree $1$, then 
  \[
    |\nabla v|^2 = f^2 + |\nabla_\theta f|^2. 
  \]
 Consequently, 
 \[
    v^2(x) \leq |\nabla v(x)|^2 \quad \text{ for any } x \in S^{n-1}. 
 \]
If  $u$ is homogeneous of degree $0$, we have a similar result for the Hessian and gradient. Although the following Lemma is not difficult to 
show in all dimensions via induction, we only  state and prove it for three dimensions. 

\begin{lemma}  \label{l:gradhess}
 Let $u:\mathbb{R}^3 \to \mathbb{R}$ and assume $u$ is homogeneous of degree $0$, then 
  \[
   \sum_{i,j=1}^3 u_{x_i x_j}^2(x) \geq 2|\nabla u(x)|^2  \quad \text{ for any } x \in S^{n-1}. 
  \]
\end{lemma}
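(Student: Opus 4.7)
The plan is to exploit the two consequences of $0$-homogeneity: Euler's identity $x\cdot\nabla u(x)=0$, and its differentiated form, which relates the Hessian on the radial direction to the gradient.

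First I would set $H(x) := (u_{x_ix_j}(x))_{i,j=1}^3$, a symmetric matrix, and rewrite the claim as $\|H(x)\|_F^2 \geq 2|\nabla u(x)|^2$, where $\|\cdot\|_F$ is the Frobenius norm. Fix $x\in S^2$. Euler's identity gives $\sum_i x_i u_{x_i}=0$; differentiating in $x_j$ and using symmetry yields
\[
 u_{x_j}(x) + \sum_{i=1}^3 x_i u_{x_ix_j}(x) = 0,
\]
that is, $H(x)\,x = -\nabla u(x)$. In particular $|H(x)\,x|^2 = |\nabla u(x)|^2$, and, contracting once more with $x$, $x^\top H(x)\,x = -x\cdot\nabla u(x)=0$.

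Next I would change to an orthonormal basis $\{x,e_2,e_3\}$ of $\R^3$ adapted to $x$. In this basis the matrix $H$ becomes a symmetric $3\times 3$ matrix $\tilde H$ (Frobenius norm is orthogonal invariant, so $\|\tilde H\|_F=\|H\|_F$), and the two identities above read
\[
 \tilde H_{11} = 0, \qquad |\nabla u(x)|^2 = |\tilde H e_1|^2 = \tilde H_{11}^2 + \tilde H_{21}^2 + \tilde H_{31}^2 = \tilde H_{21}^2 + \tilde H_{31}^2.
\]
Expanding the Frobenius norm and using symmetry $\tilde H_{ij}=\tilde H_{ji}$, I obtain
\[
 \|\tilde H\|_F^2 = \sum_{i,j=1}^3 \tilde H_{ij}^2 \;\geq\; 2\tilde H_{12}^2 + 2\tilde H_{13}^2 = 2|\nabla u(x)|^2,
\]
where the remaining diagonal and off-diagonal terms are discarded by nonnegativity. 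This gives the claimed inequality.

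There is no real obstacle: the only minor point to verify carefully is the bookkeeping of which entries in $\tilde H$ encode $|H x|^2$ versus $x^\top H x$, but once written out as above, the inequality is immediate and in fact works verbatim in every dimension. (The paper's reference to induction is presumably a way to avoid invoking the orthonormal change of basis, but the basis argument makes the three-dimensional case essentially one line.)
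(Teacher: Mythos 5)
Your proof is correct, and it takes a genuinely different route from the paper's. The paper rotates so that $x=(1,0,0)$ and then explicitly converts each Cartesian second derivative $u_{x_ix_j}(x)$ into spherical derivatives of the restriction $f=u|_{S^2}$, obtaining $\sum u_{x_ix_j}^2=u_{\theta\theta}^2+u_{\phi\phi}^2+2u_{\theta\phi}^2+2u_\theta^2+2u_\phi^2$ and discarding the pure second-order spherical terms. You instead extract the two structural identities behind that computation directly from Euler's relation: differentiating $x\cdot\nabla u=0$ gives $Hx=-\nabla u$ and hence $x^\top Hx=0$, so in an orthonormal frame with first vector $x$ the first row and column of the Hessian contribute exactly $2|\nabla u(x)|^2$ to the Frobenius norm while the $(1,1)$ entry vanishes; everything else is discarded by nonnegativity. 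The two arguments encode the same information (your $\tilde H_{11}=0$ is the paper's $u_{xx}(x_0)=0$, and your $\tilde H_{1j}$ entries are the paper's $u_{xy}=-u_\theta$, $u_{xz}=u_\phi$), but yours is coordinate-free, avoids the spherical-coordinate bookkeeping, and — as you note — proves the $n$-dimensional statement verbatim, whereas the paper restricts to $n=3$ and defers the general case to an unwritten induction. The paper's computation has the side benefit of identifying each Hessian entry with a specific spherical derivative, but that extra information is not needed for the inequality.
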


\begin{proof}
 If  $\lambda_i$ are the eigenvalues of the Hessian, 
  \[
   \sum_{i,j=1}^3 u_{x_i x_j}^2(x)  = \sum_{i=1}^3 \lambda_i^2,
 \] 
 which is invariant under rotation. Therefore,  we may assume without loss of generality that $x_0 \in S^{n-1}$ is $x_0 = (1,0,0)$, so that under 
 spherical coordinates $\theta=0$ and $\phi = \pi/2$. One may then explicitly compute that at $x_0$ we have 
  \[
   \begin{aligned}
   &u_{x x}(x_0) =0 \qquad &u_{yy}(x_0) = u_{\theta \theta}(x_0)  \qquad &u_{x y}(x_0)= - u_{\theta}(x_0) \\
   &u_{zz}(x_0) = u_{\phi \phi}(x_0) &\qquad u_{xz}(x_0) = u_{\phi}(x_0) \qquad &u_{y z} (x_0) = -u_{\theta \phi}(x_0). 
   \end{aligned}
  \]
  Then at $x_0$ we obtain 
   \[
     \sum_{i,j=1}^3 u_{x_i x_j}^2 = u_{\theta \theta}^2 + u_{\phi \phi}^2 + 2u_{\theta \phi}^2 + 2u_{\theta}^2 + 2u_{\phi}^2. 
   \]
   Since at $x_0=(1,0,0)$ we have
   \[
    |\nabla u(x_0)|^2 = |\nabla_{\theta} f(x_0)|^2 = f_{\theta}^2 + f_{\phi}^2,  
   \]
   we conclude that 
   \[
    \sum_{i,j=1}^3 u_{x_i x_j}^2(x) \geq 2|\nabla u(x)|^2 = 2|\nabla_{\theta} f(x)|^2 \quad \text{ for any } x \in S^{n-1}.    
  \]
\end{proof}

We also have the following
 \begin{lemma}  \label{l:max}
  Let $f:S^2 \to \mathbb{R}$ with $u=r^{\alpha}f$ such that $\Delta u=0$ in $\{u>0\}$. If $0<\alpha \leq 1$, then $|\nabla_{\theta} f|^2$ achieves its maximum on 
  $S^{n-1}\cap \partial \{u>0\}$. 
 \end{lemma}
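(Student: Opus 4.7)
My plan is to find a $\Delta_\theta$-subharmonic function on $S^2\cap\{f>0\}$ that coincides with $|\nabla_\theta f|^2$ on the relative boundary $S^2\cap\partial\{u>0\}$; the maximum principle will then finish the argument. Substituting $u=r^\alpha f$ into $\Delta u=0$ and expanding in spherical coordinates yields the eigenvalue equation $\Delta_\theta f = -\alpha(\alpha+1) f$ on $S^2\cap\{f>0\}$.

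The natural candidate is
\[
 F_0(\theta):=\alpha^2 f(\theta)^2 + |\nabla_\theta f(\theta)|^2,
\]
which is precisely $|\nabla u|^2$ restricted to the unit sphere, since $|\nabla u|^2 = r^{2\alpha-2}F_0(\theta)$. Because $f=0$ on $S^2\cap\partial\{u>0\}$, we have $F_0=|\nabla_\theta f|^2$ on that boundary and $F_0\ge |\nabla_\theta f|^2$ throughout, so it would suffice to show that $F_0$ is $\Delta_\theta$-subharmonic on $S^2\cap\{f>0\}$.

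To do so I would apply the Bochner formula on $S^2$ (whose Ricci curvature equals the round metric) to $\psi=f$, use the eigenvalue equation to rewrite the $\langle \nabla_\theta f, \nabla_\theta \Delta_\theta f \rangle$ term, and combine with the elementary identity $\Delta_\theta(\alpha^2 f^2) = 2\alpha^2|\nabla_\theta f|^2 - 2\alpha^3(\alpha+1) f^2$. After reorganizing one obtains
\[
 \Delta_\theta F_0 \;=\; 2|\nabla_\theta^2 f|^2 + 2(1-\alpha)|\nabla_\theta f|^2 - 2\alpha^3(\alpha+1) f^2.
\]

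The main obstacle is the negative $f^2$ term, which I would dominate using the trace--Frobenius estimate $|\nabla_\theta^2 f|^2\ge \tfrac{1}{2}(\Delta_\theta f)^2 = \tfrac{1}{2}\alpha^2(\alpha+1)^2 f^2$ for symmetric $2\times 2$ Hessians on $S^2$ --- the same type of elementary inequality underlying Lemma~\ref{l:gradhess}. Substituting gives
\[
 \Delta_\theta F_0 \;\ge\; \alpha^2(\alpha+1)(1-\alpha)\, f^2 + 2(1-\alpha)\,|\nabla_\theta f|^2 \;\ge\; 0
\]
whenever $\alpha\in(0,1]$, where the hypothesis $\alpha\le 1$ is used precisely to make both coefficients nonnegative. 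The maximum principle applied to $F_0$ on $S^2\cap\{f>0\}$ then yields
\[
\max_{\overline{S^2\cap\{f>0\}}}|\nabla_\theta f|^2 \le \max F_0 = \max_{S^2\cap\partial\{u>0\}} F_0 = \max_{S^2\cap\partial\{u>0\}}|\nabla_\theta f|^2,
\]
which is the statement of the lemma.
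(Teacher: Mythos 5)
Your proof is correct, but it takes a genuinely different route from the paper's. The paper extends $f$ to $\mathbb{R}^3$ as the degree-zero homogeneous function $v(x)=f(x/|x|)$, applies the flat Bochner identity $\Delta|\nabla v|^2=2\|D^2v\|^2+2\langle\nabla v,\nabla\Delta v\rangle$ together with the auxiliary Lemma \ref{l:gradhess} ($\sum u_{x_ix_j}^2\ge 2|\nabla u|^2$ on the sphere for degree-zero homogeneous functions), and concludes that $|\nabla v|^2$ is subharmonic in the three-dimensional cone $\{u>0\}$, the hypothesis $\alpha\le 1$ entering as $\alpha(\alpha+1)\le 2$. You instead stay on $S^2$, augment the gradient by a zero-order term, $F_0=\alpha^2f^2+|\nabla_\theta f|^2=\left.|\nabla u|^2\right|_{r=1}$, and use the intrinsic Bochner formula with $\mathrm{Ric}=g$ plus the trace inequality $|\nabla_\theta^2 f|^2\ge\tfrac12(\Delta_\theta f)^2$; your identity $\Delta_\theta F_0=2|\nabla_\theta^2 f|^2+2(1-\alpha)|\nabla_\theta f|^2-2\alpha^3(\alpha+1)f^2$ and the resulting lower bound with coefficients $\alpha^2(\alpha+1)(1-\alpha)$ and $2(1-\alpha)$ both check out, so the threshold $\alpha\le1$ appears at exactly the same place. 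The two arguments encode the same information --- the extra first-order terms $2u_\theta^2+2u_\phi^2$ in Lemma \ref{l:gradhess} play precisely the role of your Ricci term --- but your version buys something concrete: $|\nabla_\theta f|^2$ alone need not be $\Delta_\theta$-subharmonic, and the paper's subharmonic quantity $|\nabla v|^2=r^{-2}|\nabla_\theta f|^2$ still depends on $r$, so invoking the maximum principle in the (necessarily truncated) cone makes the spherical caps part of the boundary, a point the paper passes over silently; your intrinsic formulation, with the comparison $F_0\ge|\nabla_\theta f|^2$ and equality where $f=0$, avoids that wrinkle and dispenses with Lemma \ref{l:gradhess} altogether, at the modest price of quoting the Riemannian Bochner formula. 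You share with the paper the implicit assumptions that $f$ is smooth up to $S^2\cap\partial\{u>0\}$ and that this boundary set is nonempty.
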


\begin{proof}
 Since $\Delta u=0$, then $\Delta_{\theta} f = -\alpha(\alpha+1) f$. If $v=f$, then 
  \[
   \begin{aligned}
   \Delta |\nabla v|^2 &= 2 \|D^2 v\|^2 + 2 \langle \nabla v, \nabla \Delta v \rangle \\ 
                                 &= 2 \| D^2 v \| ^2 - 2 \alpha (\alpha +1)  |\nabla v|^2 \\
                                 &\geq 0. 
   \end{aligned}
  \] 
  The last inequality is a result of Lemma \ref{l:gradhess}. Then $|\nabla v|^2$ is subharmonic in $\{u>0\}$, and consequently achieves the maximum on the boundary. 
  Since $|\nabla v|^2 = |\nabla_{\theta} f|^2$, the conclusion of the Lemma is immediate. 
\end{proof}

\section{$C^1$ convergence on a wedge-type domain}

In this appendix we show $C^1$ convergence of $w_{\e} \to w_{0}$ on $\overline{U}$ where $w_{\e},w_0,U$ are defined in the proof of Lemma \ref{l:above1}. 
We recall that 
 $U:=\{x \in B_1: x_3> \cos \phi_2\}$ where $\phi_2>\pi/2$ and was fixed in the proof of Lemma
\ref{l:above1}. It is clear that $w_{\e} \to w_0$ in $C^1$ except at the corner $\{x \in \partial B_1: x_3 = \cos \phi_2\}$.  We handle the issue of the corner 
with a series of Lemmas. 

\begin{lemma}   \label{l:bhp}
Let $V:= \{(x,y,z) \in \R^3: 0 \leq \arctan(y/x) \leq \theta_0 < \pi\}$. If $\Delta u =0$ and $u=0$ on $\partial V$, and $|u| \leq C|x|$ for $|x|\geq 1$ and some $C>0$, then 
$u \equiv 0$.  
\end{lemma}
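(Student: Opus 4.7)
The plan is to run a Phragm\'en--Lindel\"of argument against the explicit harmonic barrier
\[
 B(x,y,z) := r^{\alpha}\sin(\alpha\theta), \qquad r:=\sqrt{x^2+y^2},\ \theta:=\arctan(y/x),\ \alpha:=\pi/\theta_0.
\]
The hypothesis $\theta_0 < \pi$ is equivalent to $\alpha > 1$. This $B$ is harmonic on $\R^3$ (it is the real part of $(x+iy)^{\alpha}$, independent of $z$), vanishes on $\partial V$, and is strictly positive in the interior of $V$. A short separation of variables computation identifies $B/|x|^{\alpha} = \sin(\alpha\theta)(\sin\varphi)^{\alpha}$, with $\varphi$ the polar angle from the $z$-axis, as the first Dirichlet eigenfunction of the spherical lune $V \cap S^2$, with eigenvalue $\alpha(\alpha+1)$. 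So $B$ is the natural $\alpha$-homogeneous harmonic function on this cone.

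For each $\epsilon > 0$ I would apply the maximum principle to $u - \epsilon B$ on $V \cap B_R$. The two functions both vanish on $\partial V \cap B_R$, so the whole task reduces to verifying $|u| \leq \epsilon B$ on the spherical cap $V \cap \partial B_R$ once $R$ is large; having done so, sending $R \to \infty$ gives $u \leq \epsilon B$ throughout $V$, sending $\epsilon \to 0$ gives $u \leq 0$, and applying the same argument to $-u$ gives $u \equiv 0$. The verification splits naturally: where $\sin(\alpha\theta)(\sin\varphi)^{\alpha}$ is bounded below by a positive constant, one has $\epsilon B \geq c\epsilon R^{\alpha} \gg CR \geq |u|$ for $R$ large, using $\alpha > 1$. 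Near a smooth face of $\partial V$ (bounded away from the edge), the standard boundary Lipschitz estimate for harmonic functions vanishing on a flat piece, applied on a ball of radius $\sim R$ centered at the nearest boundary point with $L^{\infty}$ input $\sim R$, gives $|u(x)| \lesssim R\sin\theta$, while $\epsilon B \gtrsim \epsilon R^{\alpha}\sin\theta$, so the ratio is $\lesssim R^{1-\alpha}/\epsilon \to 0$.

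The main obstacle is the behavior near the edge $\{x = y = 0\}$, since that is precisely where $\partial V$ fails to be smooth and the standard boundary Lipschitz estimate breaks down. The cleanest fix is to invoke the Kondrat'ev expansion of harmonic functions near a wedge edge: the leading angular mode in $r$ is exactly $r^{\alpha}\sin(\alpha\theta)$ (with the same $\alpha$), and the sublinear growth $|u| \leq C|x|$ together with a rescaling argument pins the coefficient of this mode as $O(R^{1-\alpha})$ on the slab $|z| \leq 2R$, giving $|u| \lesssim R^{1-\alpha} B$ uniformly in a neighborhood of the edge, which again beats $\epsilon B$ for $R$ large. A lower-tech alternative is to first run the max principle on $V \cap B_R \setminus \{r \leq \delta\}$ and then send $\delta \to 0$, using that both $u$ and $B$ vanish continuously on the $z$-axis to absorb the extra boundary contribution.
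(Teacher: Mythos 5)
Your main route is correct in outline, but it is genuinely different from the paper's, and one of your two proposed fixes for the edge does not work. The paper uses exactly your barrier $v=r^{\lambda}\sin(\lambda\theta)$, $\lambda=\pi/\theta_0>1$, but instead of a Phragm\'en--Lindel\"of comparison it applies the boundary Harnack principle (cited to Jerison--Kenig) in the Lipschitz domain $V\cap B_R$ to the harmonic majorant $w$ of $u_+$: boundary Harnack gives $\sup_{B_{R/2}\cap V} u_+/v \lesssim w(x_R)/v(x_R)\lesssim R/R^{\lambda}$ at an interior point $x_R$, and letting $R\to\infty$ kills $u_+$ (and $u_-$) in one stroke, with no separate face/edge analysis. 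You instead verify $|u|\le \epsilon B$ on the spherical cap by hand, which is fine in the region where the angular factor is bounded below and near the faces (boundary Lipschitz at scale $\sim R$), but forces you to supply the edge behavior yourself; your ``cleanest fix'' via Kondrat'ev-type edge asymptotics does close the argument, provided you keep the angular factor $\sin(\alpha\theta)$ in the bound $|u|\lesssim R^{1-\alpha}B$ (a bound of the form $|u|\lesssim R(r/R)^{\alpha}$ without it is not enough near the corner region close to both the edge and a face, though it can be upgraded by one more application of the face Lipschitz estimate at scale $\sim r$). Note that neither argument is self-contained: the paper's black box is boundary Harnack in a Lipschitz domain, yours is the wedge expansion, and they are of comparable depth; moreover the exponent must be exactly $\alpha=\pi/\theta_0$, which is where $\theta_0<\pi$ enters in both proofs.

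The ``lower-tech alternative'' is a genuine gap and should be dropped. The difficulty near the edge is not that the axis sits on the boundary of the comparison domain (the maximum principle needs only continuity on $\overline{V\cap B_R}$, which you have); it is that you must verify $|u|\le\epsilon B$ on the portion of the cap $\partial B_R\cap V$ at small cylindrical radius, and there continuity gives no rate: both $u$ and $\epsilon B$ tend to $0$ as $r\to 0$, but nothing a priori controls their ratio. With only the face Lipschitz bound $|u|\lesssim R\sin\theta$ against $\epsilon B\sim \epsilon r^{\alpha}\sin(\alpha\theta)$, the comparison fails on the whole band $\delta\le r\lesssim (CR/\epsilon)^{1/\alpha}$ of the cap, whose width is independent of $\delta$; excising $\{r\le\delta\}$ and sending $\delta\to 0$ therefore absorbs nothing, since the uncontrolled boundary contribution lives on the cap, not on the cylinder $\{r=\delta\}$. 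What is needed there is precisely the sharp decay $|u|\lesssim R^{1-\alpha}r^{\alpha}\sin(\alpha\theta)$ at scale $R$, i.e.\ the Kondrat'ev/boundary Harnack input of your first fix (or the paper's global boundary Harnack); so present that as the proof, not as an optional refinement.
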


\begin{proof}
 Let $v(r,\theta, x_3)= r^{\lambda} \sin(\lambda \theta)$ where $\lambda \theta_0 =\pi$. Then $\Delta v=0$ in $V$, $v=0$ on $\partial V$, and $v\geq 0$ in $V$. 
 Let $w$ be the harmonic lifting of $u_+$ on $B_R\cap V$. We will choose $R$ large and use the boundary Harnack principle \cite{jk82}. 
 \[
  \sup_{B_{R/2}\cap V} \frac{u_+}{v} \leq C_1 \sup_{B_{R/2}\cap v} \frac{w}{v} \leq \inf_{B_{R/2}\cap U} \frac{w}{v} \leq C_2 \frac{R}{R^{\lambda}}. 
 \]
 Since $\lambda >1$, as $R \to \infty$ we obtain that $u_+\equiv 0$. The same argument applies to $u_-$. 
\end{proof}

\begin{lemma}  \label{l:lipbound}
 Let $\Delta_c u =0$ in $U$ with $u=0$ on $\{x_3 =\cos \phi_2\}\cap \overline{B}_1$ and $u \in C^1 (\overline{U})$.  Then there exists $C<\infty$ 
 depending on $\displaystyle \| u\|_{C^1(\overline{U})}$ and $c_0$ but independent of $c$ if $0\leq c \leq c_0$ such that if $x_0 \in \partial B_1 \cap \{x_3 = \cos \phi_2\}$, then 
  \[
   |u| \leq C|x-x_0|.
  \]
\end{lemma}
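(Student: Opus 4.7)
The plan is to exploit the convexity of $\overline{U}$, the vanishing of $u$ at $x_0$, and the $C^1(\overline{U})$ hypothesis in order to obtain the Lipschitz bound by integrating $\nabla u$ along a straight segment, bypassing any wedge/barrier analysis.

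First I would observe that $\overline{U}=\overline{B}_1 \cap \{x_3\geq \cos\phi_2\}$ is the intersection of two convex sets in $\R^3$, and is therefore convex. Consequently, for any $x\in \overline{U}$ the segment $\sigma(t)=x_0+t(x-x_0)$, $t\in[0,1]$, lies entirely in $\overline{U}$. Next, since $x_0\in \partial B_1\cap \{x_3=\cos\phi_2\}$ belongs in particular to $\{x_3=\cos\phi_2\}\cap \overline{B}_1$, the hypothesis forces $u(x_0)=0$. Using that $u\in C^1(\overline{U})$, I would apply the fundamental theorem of calculus along $\sigma$ to get
\[
 |u(x)|=|u(x)-u(x_0)|=\left|\int_0^1 \nabla u(\sigma(t))\cdot (x-x_0)\,dt\right|\leq \|\nabla u\|_{L^\infty(\overline{U})}\,|x-x_0|,
\]
so that $C:=\|u\|_{C^1(\overline{U})}$ suffices.

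The main obstacle one might have anticipated is the corner of $\partial U$ along the circle $\partial B_1\cap \{x_3=\cos\phi_2\}$, where the two smooth faces meet at an interior dihedral angle $\phi_2\in(\pi/2,\pi)$. For a $\Delta_c$-harmonic function vanishing on only one of the two faces, linear decay from the edge would normally require a wedge-barrier construction in the spirit of Lemma \ref{l:bhp}; since $\phi_2<\pi$, such an argument would in fact yield behavior like $r^{\pi/\phi_2}$ which is better than linear. Here, however, the $C^1(\overline{U})$ hypothesis has already absorbed the wedge analysis, so convexity alone closes the estimate and the PDE never enters. Uniformity in $c$ is automatic, because in the construction of Lemma \ref{l:above1} the angle $\phi_2$ and hence the domain $U$ are fixed independently of $c$ for $0\leq c\leq c_0$; therefore $C$ depends on $c$ only through the a priori norm $\|u\|_{C^1(\overline{U})}$.
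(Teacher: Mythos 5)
Your argument is internally valid for the lemma exactly as worded: $\overline{U}=\overline{B}_1\cap\{x_3\ge\cos\phi_2\}$ is convex, $u(x_0)=0$, and the fundamental theorem of calculus along the segment gives $|u(x)|\le\|\nabla u\|_{L^\infty(\overline U)}\,|x-x_0|$. But this reading makes the lemma vacuous, and the constant you produce cannot be used where the lemma is actually invoked. In Lemma \ref{l:Uconv} the estimate is applied to a sequence $u_k$, rescaled at the corner by factors $r_k\to0$, for which a uniform bound on $\|u_k\|_{C^1(\overline U)}$ is precisely what is \emph{not} available: uniform $C^1$ control up to the edge is the \emph{conclusion} of Lemma \ref{l:Uconv}, not a hypothesis. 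What is uniformly controlled along that sequence is $\sup_U|u_k|$ (from uniform convergence) and the $C^1$ norm of $u_k$ on the spherical face $\partial B_1\cap\{x_3\ge\cos\phi_2\}$; the real content of Lemma \ref{l:lipbound} is that these data alone, together with $c_0$, give the linear bound at the corner, uniformly in $c$ and over the family of solutions. Your constant $C=\|u\|_{C^1(\overline U)}$ depends on exactly the quantity one is trying to control, so feeding it into the application is circular. Your own remark that ``the PDE never enters'' is the warning sign: a quantitative lemma about solutions of $\Delta_c u=0$ whose proof uses neither the equation nor the geometry of the wedge is not the lemma the paper needs.

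This is why the paper does not integrate the gradient but instead runs a compactness and blow-up argument. Negating the uniform estimate produces sequences $u_j$, $c_j$, $r_j\to0$ with $S_{r_j}/r_j\to\infty$ and a doubling property; the rescaled functions converge to a constant-coefficient harmonic function on the blow-up wedge $V$ which vanishes on \emph{both} faces (on the flat face because $u_j=0$ there, on the spherical face because the $C^1$ bound of the trace on $\partial B_1$ is annihilated by the normalization $S_{r_j}/r_j\to\infty$) and grows at most linearly at infinity. Lemma \ref{l:bhp}, via the barrier $r^\lambda\sin(\lambda\theta)$ with $\lambda=\pi/\theta_0>1$, then forces the limit to vanish, contradicting the normalization $\sup_{B_1}|u_{r_j}|=1$. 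To salvage your proposal you would either have to restate the lemma with the constant depending only on $\sup_U|u|$ and $\|u\|_{C^1(\partial B_1\cap\{x_3\ge\cos\phi_2\})}$ and then supply this wedge analysis, or prove an independent uniform $C^1$-up-to-the-edge estimate for the family, which is the same difficulty in different clothing.
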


\begin{proof}
 We first translate so that $x_0 =0$ the origin. 
 We will use compactness combined with a blow-up similar to the argument in the proof of Theorem 6.1 in \cite{ALP}. 
  Suppose by way of contradiction that no such $C$ exists.  
  Then there exists $u_j, c_j , r_j$ with 
   $\Delta_{c_j} u_j =0$ and $r_j \to 0$ such that if 
   \[
    S_{r_j} = \sup_{\Omega \cap B_{r_j}} |u_j|
   \]
  then 
  \[
   \begin{aligned}
    (i) &\quad \frac{S_{r_j}}{r_j} \to \infty \\
    (ii) &\quad  S_{r_j2^k} \leq 2^k S_{r_j} \text{ for } k \in \mathbb{N} \text{ with } 2^k r_j \leq 1. 
   \end{aligned}
  \]
  We let
  \[
   u_{r_j} := \frac{u(r_j x)}{S_{r_j}}. 
  \]
  We have the following 
   \[
    \begin{aligned}
    (1) &\quad \tilde{\Delta}_{c_j} u_{r_j} =0 \\
    (2) &\quad \sup_{B_{2^k}} |u_{r_j}| \leq 2^k \text{ whenever } 2^k r_j\leq 1 \\
    (3) &\quad u_{r_j}(0)=0 \\
    (4) &\quad \sup_{B_1} |u_{r_j}|=1. 
    \end{aligned}
   \]
   Then $c_j \to c$, and $u_{r_j} \to u_0$ with 
   \[
    \begin{cases}
        a^{ij} \partial_{ij} u_0 =0 \\
      u_0 =0 \text{ on } \partial U \\
      u_0 \leq |x| \text{ for } |x|\geq 1,
    \end{cases}
   \]
   where after rotation
    \[ 
      a^{ij}=\frac{1}{(1+c^2)}
        \left( 
          \begin{array}{ccc}
          1+c^2                              & 0                                                & 0 \\
          0                                      & 1 + c^2\cos^2 \phi_2              & -c^2 \sin \phi_2 \cos \phi _2 \\
          0                                      & -c^2 \sin \phi_2 \cos \phi_2       &1+c^2 \sin^2 \phi_2 
        \end{array} 
       \right).
\]  
 Since $a^{ij}$ is a constant coefficient matrix, a linear change of variables in only the $x_2$ and $x_3$ variables will give a new 
 solution $\tilde{u}_0$  with $\Delta \tilde{u}_0=0$. For $0\leq c \leq c_0$
 with $c_0$ small, the transformed domain will still be a wedge-domain $V$ with angle less than $\pi$.  
 Then by Lemma \ref{l:bhp}, it follows that $\tilde{u}_0 \equiv 0$ so that $u_0\equiv 0$. 
 This contradicts the fact that $\displaystyle \sup_{U\cap B_1} u_0 =1 $.  

\end{proof}

\begin{lemma}  \label{l:Uconv}
 Let $u_k$ be a sequence of solutions to $\Delta_{c_k} u_k =0$ in $U$ with $u_k =0$ on $\{x_3 = \cos\phi_2\}\cap \overline{B}_1$. Assume $\lim_{k \to \infty} c_k=0$ and  r that $u_k \to v$ uniformly in 
  $\overline U$ and $u_k \to v$ in $C^1(\partial B_1 \cap \{x_3 \geq \phi_2\})$.  Then $u_k \to v$ in $C^1(\overline{U})$. 
\end{lemma}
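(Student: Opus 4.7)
The plan is to combine standard boundary $C^{1,\alpha}$ regularity away from the corner circle $\Sigma := \partial B_1 \cap \{x_3 = \cos \phi_2\}$ with a compactness/blow-up argument at $\Sigma$ using Lemmas \ref{l:lipbound} and \ref{l:bhp}. First I would observe that $\Delta_{c_k}$ is uniformly elliptic as $c_k \to 0$, and that each of the two smooth pieces of $\partial U$ carries either identically zero data (on the flat disc $\{x_3 = \cos \phi_2\} \cap \overline B_1$) or data converging in $C^1$ (on the spherical cap, by hypothesis). Classical boundary Schauder estimates therefore provide uniform $C^{1,\alpha}$ bounds on compact subsets of $\overline U \setminus \Sigma$, and Arzel\`a--Ascoli yields $u_k \to v$ in $C^1_{\mathrm{loc}}(\overline U \setminus \Sigma)$, as well as $\Delta v = 0$ in $U$.

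Next I would argue by contradiction. Suppose $u_k \not\to v$ in $C^1(\overline U)$; then along a subsequence there are $\delta > 0$ and $x_k \in \overline U$ with $|\nabla u_k(x_k) - \nabla v(x_k)| \geq \delta$. The previous paragraph forces $\operatorname{dist}(x_k, \Sigma) \to 0$. Let $y_k \in \Sigma$ be a nearest point to $x_k$, set $r_k := |x_k - y_k|$, and rescale via
\[
  \tilde u_k(z) := \frac{u_k(y_k + r_k z)}{r_k}, \qquad \tilde v_k(z) := \frac{v(y_k + r_k z)}{r_k}.
\]
Lemma \ref{l:lipbound} gives uniform Lipschitz bounds for $\tilde u_k$ and $\tilde v_k$ on bounded subsets of $\R^3$, and the rescaled domain $(U - y_k)/r_k$ converges to the tangent wedge $W$ of $U$ at $y_\infty := \lim y_k$. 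Since $\Sigma$ is a smooth circle, $W$ splits as a planar wedge of opening angle $\phi_2 < \pi$ times a line, which (after a linear change of variables) matches the setting of Lemma \ref{l:bhp}. Extract subsequential limits $\tilde u_k \to U_\infty$ and $\tilde v_k \to V_\infty$; both are harmonic in $W$ (since $c_k \to 0$), vanish on the rescaled flat face, and converge to the same linear function on the rescaled spherical face (using the $C^1$ convergence of the boundary data on $\partial B_1 \cap \{x_3 \geq \cos \phi_2\}$ together with $u_k(y_k) = v(y_k) = 0$). Thus $U_\infty - V_\infty$ is harmonic in $W$, vanishes on all of $\partial W$, and grows at most linearly, so Lemma \ref{l:bhp} gives $U_\infty \equiv V_\infty$. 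Interior and smooth-boundary $C^{1,\alpha}$ estimates applied at $\xi_k := (x_k - y_k)/r_k$ (which has $|\xi_k| = 1$ and so stays away from the vertex of $W$) then yield $\nabla \tilde u_k(\xi_k) - \nabla \tilde v_k(\xi_k) \to 0$; unwinding the rescaling gives $\nabla u_k(x_k) - \nabla v(x_k) \to 0$, contradicting the choice of $x_k$.

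The main obstacle is carrying out the blow-up uniformly at $\Sigma$: the rescaled spherical-face boundary data must be shown to converge to the correct linear function locally uniformly in $y_k \in \Sigma$, and one must confirm that the geometry of the tangent wedge $W$ depends continuously on $y_k$ so that both the compactness argument and the application of Lemma \ref{l:bhp} survive the passage to the limit. A secondary technical point is checking that the boundary Schauder estimates used in the first paragraph are indeed uniform in $k$ up to the $C^1$-converging portion of $\partial B_1$; this follows from standard perturbation arguments since the coefficients of $\Delta_{c_k}$ converge smoothly to those of $\Delta$.
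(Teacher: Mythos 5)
Your proposal is correct and follows essentially the same route as the paper: argue by contradiction at points where the gradients fail to converge, note these must accumulate at the corner circle, blow up there at the scale of the distance to the corner, use Lemma \ref{l:lipbound} for the uniform linear bound that makes the blow-up compact, and apply the Liouville-type Lemma \ref{l:bhp} on the limiting wedge to kill the difference, contradicting the persistence of the gradient gap. Your write-up is in fact somewhat more careful than the paper's (rescaling about the nearest corner point and tracking the rescaled boundary data explicitly), but the ideas and the supporting lemmas are identical.
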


\begin{proof}
 Suppose by way of contradiction that there exist points  $x_k \in \partial U$ such that $x_k \to x_0$ and 
  \begin{equation}  \label{e:ep}
   \left |\frac{\partial u_k}{\partial \nu}(x_k) - \frac{\partial v}{\partial \nu}(x_k) \right| > \epsilon,
  \end{equation}
  for some fixed $\e$. Let $|x_k-x_0|=r_k$. We rescale by 
   \[
    \tilde{u}_k = \frac{u_k(r_k (x-x_0))}{r_k}.  
   \]
   and
   \[
    v_k := \frac{v(r_k (x-x_0)}{r_k}. 
   \]
  By Lemma \ref{l:lipbound} we have that 
   \[
    |\tilde{u}_k|, |v_k| \leq C 
   \]
  Now $\tilde{u}_k - v_k \to v_0$ uniformly with $\Delta v_0 =0$. Furthermore, since $u_k \to v$ in $C^1(\partial B_1 \cap \{x_3 \geq \phi_2\})$ it follows that 
  $v_0 \equiv 0$ on $\partial V$ where $V$ is the domain obtained in the blowup and satisfies the assumptions of 
  Lemma \ref{l:lipbound}. Then since $v_0$ has linear growth it follows from Lemma \ref{l:bhp} that $v \equiv 0$. Now 
  from the $C^1$ convergence of $u_k - v_k$ away from the wedge of $V$ and \eqref{e:ep} it follows that there exists $x_0 \in \partial V \cap \partial B_1$ such that 
   $|\nabla v_0| > \epsilon$. But this contradicts the fact that $v \equiv 0$. 
\end{proof}

\bibliographystyle{amsplain}
\bibliography{refhighdcone} 

\providecommand{\bysame}{\leavevmode\hbox to3em{\hrulefill}\thinspace}
\providecommand{\MR}{\relax\ifhmode\unskip\space\fi MR }
\providecommand{\MRhref}[2]{%
  \href{http://www.ams.org/mathscinet-getitem?mr=#1}{#2}
}
\providecommand{\href}[2]{#2}
\begin{thebibliography}{10}

\bibitem{a12}
Mark Allen, \emph{Separation of a lower dimensional free boundary in a
  two-phase problem}, Math. Res. Lett. \textbf{19} (2012), no.~5, 1055--1074.
  \MR{3039830}

\bibitem{acl15}
Mark Allen and H\'ector~Chang Lara, \emph{Free boundaries on two-dimensional
  cones}, J. Geom. Anal. \textbf{25} (2015), no.~3, 1547--1575. \MR{3358064}

\bibitem{ALP}
Mark Allen, Erik Lindgren, and Arshak Petrosyan, \emph{The two-phase fractional
  obstacle problem}, SIAM J. Math. Anal. \textbf{47} (2015), no.~3, 1879--1905.
  \MR{3348118}

\bibitem{ac81}
H.~W. Alt and L.~A. Caffarelli, \emph{Existence and regularity for a minimum
  problem with free boundary}, J. Reine Angew. Math. \textbf{325} (1981),
  105--144. \MR{618549}

\bibitem{bb91}
Richard~F. Bass and Krzysztof Burdzy, \emph{A boundary {H}arnack principle in
  twisted {H}\"older domains}, Ann. of Math. (2) \textbf{134} (1991), no.~2,
  253--276. \MR{1127476}

\bibitem{bdg69}
E.~Bombieri, E.~De~Giorgi, and E.~Giusti, \emph{Minimal cones and the
  {B}ernstein problem}, Invent. Math. \textbf{7} (1969), 243--268. \MR{0250205}

\bibitem{c87}
Luis~A. Caffarelli, \emph{A {H}arnack inequality approach to the regularity of
  free boundaries. {I}. {L}ipschitz free boundaries are {$C^{1,\alpha}$}}, Rev.
  Mat. Iberoamericana \textbf{3} (1987), no.~2, 139--162. \MR{990856}

\bibitem{c88}
\bysame, \emph{A {H}arnack inequality approach to the regularity of free
  boundaries. {III}.\ {E}xistence theory, compactness, and dependence on
  {$X$}}, Ann. Scuola Norm. Sup. Pisa Cl. Sci. (4) \textbf{15} (1988), no.~4,
  583--602 (1989). \MR{1029856}

\bibitem{c89}
\bysame, \emph{A {H}arnack inequality approach to the regularity of free
  boundaries. {II}. {F}lat free boundaries are {L}ipschitz}, Comm. Pure Appl.
  Math. \textbf{42} (1989), no.~1, 55--78. \MR{973745}

\bibitem{cjk04}
Luis~A. Caffarelli, David Jerison, and Carlos~E. Kenig, \emph{Global energy
  minimizers for free boundary problems and full regularity in three
  dimensions}, Noncompact problems at the intersection of geometry, analysis,
  and topology, Contemp. Math., vol. 350, Amer. Math. Soc., Providence, RI,
  2004, pp.~83--97. \MR{2082392}

\bibitem{dfs16}
Daniela De~Silva, Fausto Ferrari, and Sandro Salsa, \emph{Regularity of the
  free boundary for two-phase problems governed by divergence form equations
  and applications}, Nonlinear Anal. \textbf{138} (2016), 3--30. \MR{3485136}

\bibitem{dj09}
Daniela De~Silva and David Jerison, \emph{A singular energy minimizing free
  boundary}, J. Reine Angew. Math. \textbf{635} (2009), 1--21. \MR{2572253}

\bibitem{d76}
Manfredo~P. do~Carmo, \emph{Differential geometry of curves and surfaces},
  Prentice-Hall, Inc., Englewood Cliffs, N.J., 1976, Translated from the
  Portuguese. \MR{0394451}

\bibitem{pt16}
Disson dos Prazeres and Eduardo~V. Teixeira, \emph{Cavity problems in
  discontinuous media}, Calc. Var. Partial Differential Equations \textbf{55}
  (2016), no.~1, Art. 10, 15. \MR{3448940}

\bibitem{fs07}
Fausto Ferrari and Sandro Salsa, \emph{Regularity of the free boundary in
  two-phase problems for linear elliptic operators}, Adv. Math. \textbf{214}
  (2007), no.~1, 288--322. \MR{2348032}

\bibitem{jk82}
David Jerison and Carlos Kenig, \emph{Boundary behavior of harmonic functions
  in nontangentially accessible domains}, Adv. in Math. \textbf{46} (1982),
  no.~1, 80--147.

\bibitem{js15}
David Jerison and Ovidiu Savin, \emph{Some remarks on stability of cones for
  the one-phase free boundary problem}, Geom. Funct. Anal. \textbf{25} (2015),
  no.~4, 1240--1257. \MR{3385632}

\bibitem{m02}
Frank Morgan, \emph{Area-minimizing surfaces in cones}, Comm. Anal. Geom.
  \textbf{10} (2002), no.~5, 971--983. \MR{1957658}

\bibitem{s71}
James Serrin, \emph{A symmetry problem in potential theory}, Arch. Rational
  Mech. Anal. \textbf{43} (1971), 304--318. \MR{0333220}

\bibitem{s68}
James Simons, \emph{Minimal varieties in riemannian manifolds}, Ann. of Math.
  (2) \textbf{88} (1968), 62--105. \MR{0233295}

\bibitem{t14}
Martin Traizet, \emph{Classification of the solutions to an overdetermined
  elliptic problem in the plane}, Geom. Funct. Anal. \textbf{24} (2014), no.~2,
  690--720. \MR{3192039}

\bibitem{w99}
Georg~Sebastian Weiss, \emph{Partial regularity for a minimum problem with free
  boundary}, J. Geom. Anal. \textbf{9} (1999), no.~2, 317--326. \MR{1759450}

\end{thebibliography}

\end{document}